\documentclass[a4paper,twoside,12pt]{amsart}
\usepackage[mode=image]{standalone} 

\usepackage[margin=4cm]{geometry}
\usepackage[T1]{fontenc}

\usepackage{amsthm, amsmath, amssymb, bbm, bm}
\usepackage{mathrsfs}
\usepackage{yhmath}

\usepackage{accents}

\usepackage{tikz}
\usetikzlibrary{calc}
\usetikzlibrary{intersections}
\usepackage{tikz-3dplot}

\usepackage{enumerate}
\usepackage{graphics}

\usepackage[all]{xy}
\makeatother
\newdir{ >}{{}*!/-10pt/@{>}}

\usepackage[colorlinks=true, pdfstartview=FitV, linkcolor=blue, %
citecolor=blue, urlcolor=blue]{hyperref}

\usepackage{color}

\usepackage{marginnote}
\usepackage{xspace}
\usepackage{ulem}
\normalem

\usepackage{units}

\usepackage{accents}

\usepackage{tensor}


\makeatletter

\newcommand{\nc}{\newcommand}

\setcounter{tocdepth}{1}

\numberwithin{equation}{section}

\hyphenation{con-struc-tible}

\theoremstyle{plain}

\newtheorem{theorem}{Theorem}[section]
\newtheorem*{theorem*}{Theorem}

\newtheorem{proposition}[theorem]{Proposition}
\newtheorem{lemma}[theorem]{Lemma}

\newtheorem{sublemma}[theorem]{Sublemma}

\theoremstyle{definition}
\newtheorem{definition}[theorem]{Definition}
\newtheorem{lemmadefinition}[theorem]{Lemma-Definition}

\newtheorem*{example*}{Example}
\newtheorem{notation}[theorem]{Notation}
\newtheorem{remark}[theorem]{Remark}

\nc{\Lemma}{\begin{lemma}}
\nc{\enlemma}{\end{lemma}}
\nc{\Prop}{\begin{proposition}}
\nc{\enprop}{\end{proposition}}
\nc{\Def}{\begin{definition}}
\nc{\edf}{\end{definition}}

\renewcommand{\emptyset}{\varnothing}

\nc{\scup}{\mathop{\scalebox{.8}{$\displaystyle\bigcup$}}\mspace{1mu}\limits}
\nc{\scap}{\mathop{\scalebox{.8}{$\displaystyle\bigcap$}}\limits}
\nc{\ssqcup}{\mathop{\scalebox{.8}{$\displaystyle\bigsqcup$}}\limits}

\newcommand{\dunion}{\sqcup}
\newcommand{\union}{\cup}

\newcommand{\R}{\mathbb{R}}

\DeclareMathOperator{\id}{id}

\newcommand{\derived}[1]{\mathrm{#1}}

\newcommand{\derd}{\derived{D}}
\newcommand{\dere}{\derived{E}}

\newcommand{\derr}{\derived{R}}
\newcommand{\derl}{\derived{L}}
\nc{\derb}{\derd^{\mathrm{b}}}
\newcommand{\BDC}{\derd^{\mathrm{b}}}

\nc{\soplus}{\scalebox{.65}{\raisebox{.2ex}{$\displaystyle\bigoplus$}}}

\newcommand{\dsum}[1][]{\mathbin{\oplus_{#1}}}

\renewcommand{\to}[1][]{\xrightarrow{#1}}
\newcommand{\from}[1][]{\xleftarrow{#1}}

\newcommand{\isoto}[1][]{\xrightarrow[#1]{%
{\raisebox{-.6ex}[0ex][0ex]{$\mspace{1mu}\sim\mspace{2mu}$}}}}

\newcommand{\Endo}[1][]{\mathrm{End}_{\raise1.5ex\hbox to.1em{}#1}}

\newcommand{\Hom}[1][]{\mathrm{Hom}_{\raise1.5ex\hbox to.1em{}#1}}

\newcommand{\RHom}[1][]{\derr\mathrm{Hom}_{\raise1.5ex\hbox to.1em{}#1}}

\newcommand{\Ext}[2][]{\mathrm{Ext}_{\raise1.5ex\hbox to.1em{}#1}^{#2}}

\newcommand{\Tens}[1][]{\mathbin{\otimes_{\raise1.5ex\hbox to-.1em{}#1}}}

\newcommand{\LTens}[1][]{\mathbin{\otimes_{\raise1.5ex\hbox to-.1em{}#1}^{\derl}}}

\newcommand{\Tor}[2][]{\mathrm{Tor}^{\raise1.5ex\hbox to.1em{}#1}_{#2}}

\newcommand{\sheaffont}[1]{\mathcal{#1}}

\def\shi{\sheaffont{I}}

\newcommand{\shendo}[1][]{{\sheaffont{E}nd}_{\raise1.5ex\hbox to.1em{}#1}}

\renewcommand{\hom}[1][]{{\sheaffont{H}om}_{\raise1.5ex\hbox to.1em{}#1}}
\newcommand{\aut}[1][]{{\sheaffont{A}ut}_{\raise1.5ex\hbox to.1em{}#1}}
\newcommand{\inn}[1][]{{\sheaffont{I}nn}_{\raise1.5ex\hbox to.1em{}#1}}

\newcommand{\rhom}[1][]{{\derr\sheaffont{H}om}_{\raise1.5ex\hbox to.1em{}#1}}

\newcommand{\ext}[2][]{{\sheaffont{E}xt}_{\raise1.5ex\hbox to.1em{}#1}^{#2}}

\newcommand{\thom}[1][]{{\sheaffont{T}hom}_{\raise1.5ex\hbox to.1em{}#1}}

\newcommand{\tens}[1][]{\mathbin{\otimes_{\raise1.5ex\hbox to-.1em{}#1}}}

\newcommand{\ltens}[1][]{\mathbin{\otimes_{\raise1.5ex\hbox to-.1em{}#1}^{\derl}}}

\newcommand{\tor}[2][]{{\sheaffont{T}or}^{\raise1.5ex\hbox to.1em{}#1}_{#2}}

\newcommand{\etens}[1][]{\mathbin{\boxtimes_{\raise1.5ex\hbox to-.1em{}#1}}}

\newcommand{\roim}[1]{\derr#1_*}

\newcommand{\reim}[1]{\derr#1_{\mspace{.5mu}!}\mspace{2mu}}

\newcommand{\reeim}[1]{\derr#1_{\mspace{1mu}!!}\mspace{1mu}}

\newcommand{\opb}[1]{#1^{-1}}

\newcommand{\epb}[1]{#1^{\mspace{1.5mu}!}\mspace{2mu}}

\newcommand{\tenstop}[1][]{\mathbin{\hat{\otimes}_{\raise1.5ex\hbox to-.1em{}#1}}}

\newcommand{\homtop}[1][]{\sheaffont{L}_{\raise1.5ex\hbox to.1em{}#1}}

\newcommand{\Homtop}[1][]{\mathrm{L}_{\raise1.5ex\hbox to.1em{}#1}}

\newcommand{\D}{\sheaffont{D}}

\newcommand{\detens}[1][]%
{\mathbin{\boxtimes_{\raise1.5ex\hbox to-.1em{}#1}^{\mspace{2mu}\mathsf{D}}}}

\newcommand{\dtens}[1][]{\mathbin{\otimes_{\raise1.5ex\hbox to-.1em{}#1}^{\mathsf{D}}}}

\renewcommand{\leq}{\leqslant}
\renewcommand{\geq}{\geqslant}

\newcommand{\field}{\mathbf{k}}
\newcommand{\ind}{\mathrm{I}\mspace{2mu}}
\newcommand{\ifield}{\ind\field}

\newcommand{\cl}{\colon}

\newcommand{\ccomp}{\mathbin{\mathop\circ\limits^+}}
\newcommand{\ctens}{\mathbin{\mathop\otimes\limits^+}}
\newcommand{\cetens}[1][]{\mathbin{\mathop\boxtimes\limits^+_{\raise1.5ex\hbox to-.1em{}#1}}}
\newcommand{\cihom}{{\derr\shi hom}^+}

\newcommand{\ihom}[1][]{{\shi hom}_{\raise1.5ex\hbox to.1em{}#1}}
\newcommand{\rihom}[1][]{{\derr\mspace{2mu}\shi hom}_{\raise1.5ex\hbox to.1em{}#1}}
\newcommand{\ii}[1][]{{\sheaffont{I}h}_{\raise1.5ex\hbox to.1em{}#1}}

\newcommand{\indlim}[1][]{\mathop{\text{\rm``$\varinjlim$''}}\limits_{#1}}

\newcommand{\dcomp}[1][]{\mathbin{\circ_{\raise1.5ex\hbox to-.1em{}#1}^{\mathsf{D}}}}

\newcommand{\enh}{\derived{E}}

\newcommand{\BEC}[2][\ifield]{\dere^{\mathrm{b}}(#1_{#2})}

\newcommand{\BECcon}[2][\ifield]{\dere^{\mathrm{b}}_{\bGmp}(#1_{#2})}
\newcommand{\BECp}[2][\ifield]{\dere^{\mathrm{b}}_+(#1_{#2})}

\newcommand{\BECm}[2][\ifield]{\dere^{\mathrm{b}}_-(#1_{#2})}
\newcommand{\BECpm}[2][\ifield]{\dere^{\mathrm{b}}_\pm(#1_{#2})}

\newcommand{\BECast}[2][\ifield]{\dere^{\mathrm{b}}_\ast(#1_{#2})}

\newcommand{\Edual}{\dual^\enh}
 \newcommand{\Eoim}[1]{\enh#1_*}

\newcommand{\Eeeim}[1]{\enh#1_{!!}}

\newcommand{\Eopb}[1]{\enh#1^{-1}}
\newcommand{\Eepb}[1]{\enh\mspace{1mu}#1^{\mspace{1.5mu}!}}

\newcommand{\semicolon}{\nobreak \mskip2mu\mathpunct{}\nonscript\mkern-\thinmuskip{;}\mskip6mu plus1mu\relax}

\newcommand{\dual}{\mathrm{D}}

\newcommand{\defeq}{\mathbin{:=}}

\newcommand{\bl}{\bigl(}
\newcommand{\br}{\bigr)}
\newcommand{\To}[1][]{\xrightarrow[]{\mspace{10mu}{#1}\mspace{10mu}}}

\newenvironment{myarray}[1]{\relax\setlength{\arraycolsep}{1pt}

\begin{array}{#1}}{\end{array}\relax}

\newcommand{\ba}{\begin{myarray}}
\newcommand{\ea}{\end{myarray}}

\newcommand{\be}{\begin{enumerate}}
\newcommand{\ee}{\end{enumerate}}
\newcommand{\bnum}{\be[{\rm(i)}]}

\nc{\bwr}{\mbox{\large{$\wr$}}}
\nc{\vphi}{\varphi}
\nc{\seteq}{\mathbin{:=}}
\nc{\noi}{\noindent}
\nc{\ro}{{\rm(}}
\nc{\rf}{{\rm)}\xspace}
\nc{\ms}{\mspace}
\nc{\sbcup}{\mathop{\scalebox{0.75}{$\displaystyle\bigcup$}}}
\nc{\ol}{\overline}
\nc{\scbul}{{\,\raise1pt\hbox{$\scriptscriptstyle\bullet$}\,}}
\nc{\set}[2]{\left\{#1\;\semicolon\; #2 \right\}}
\nc{\extp}{\mathop{\raisebox{.3ex}{\scalebox{0.8}{$\displaystyle\bigwedge$}}}\limits}

\newenvironment{myequation}
{\relax\setlength{\arraycolsep}{1pt}\begin{eqnarray}}
{\end{eqnarray}}
\newenvironment{myequationn}
{\relax\setlength{\arraycolsep}{1pt}\begin{eqnarray*}}
{\end{eqnarray*}}

\newenvironment{myalign}
{\relax\begin{align}}
{\end{align}}
\newenvironment{myalignn}
{\relax\begin{align*}}
{\relax\end{align*}}

\nc{\eq}{\begin{myequation}}
\nc{\eneq}{\end{myequation}}
\nc{\eqn}{\begin{myequationn}}
\nc{\eneqn}{\end{myequationn}}

\nc{\eqa}{\begin{myalign}}
\nc{\eneqa}{\end{myalign}}
\nc{\eqan}{\begin{myalignn}}
\nc{\eneqan}{\end{myalignn}}

\nc{\on}{\operatorname}
\nc{\Ind}{\on{Ind}}
\nc{\Proof}{\begin{proof}}
\nc{\QED}{\end{proof}}
\nc{\cor}{\field}
\nc{\tone}{\To[+1]}

\renewcommand{\tor}{\mathrm{tor}}

\newcommand{\bordered}[1]{{\mathsf{#1}}}

\newcommand{\bclose}[1]{{\accentset{\vee}{#1}}}
\newcommand{\unbordered}[1]{{\accentset{\circ}{#1}}}

\newcommand{\bR}{{\R_\infty}}

\newcommand{\bM}{\inbordered{M}}
\nc{\unb}{\unbordered}
\nc{\eps}{\varepsilon}
\nc{\epsi}{\epsilon}
\nc{\inb}{\inbordered}
\nc{\colim}{\varinjlim\limits}
\nc{\ssubset}{\subset\ms{-3mu}\subset}
\nc{\al}{\alpha}
\nc{\qtq}[1][and]{\quad\text{#1}\quad}
\nc{\qt}[1]{\quad\text{#1}}
\nc{\olG}[1][f]{{\overset{\ms{4mu}\rule[-.05ex]{1.6ex}{.115ex}}{\Gamma}}_{%
\ms{-3mu}#1}}

\newcommand{\cM}{\bclose{M}}

\nc{\cf}{\bclose{f}}
\nc{\cp}{\bclose{p}}

\newcommand{\inbordered}[1]{{#1_\infty}}

\newcommand{\bZ}{\inbordered{Z}}

\newcommand{\quot}{\derived Q}

\newcommand{\Efield}{\field^\enh}

\newcommand{\W}{{V^*}}
\newcommand{\RP}{\mathsf{P}}

\newcommand{\ex}{\mathsf{E}}

\newcommand{\Fou}{\mathsf{F}}
\newcommand{\Foua}{{\mathpalette\Fouatemp\relax}}
\newcommand{\Fouatemp}[2]{\reflectbox{$#1\Fou$}}
\newcommand{\Lap}{\mathsf{L}}
\newcommand{\lap}{{}^\Lap}

\newcommand{\Lapa}{{\mathpalette\Lapatemp\relax}}
\newcommand{\Lapatemp}[2]{\reflectbox{$#1\Lap$}}
\newcommand{\lapa}{{}^{\Lapa}}
\newcommand{\lapar}{{}^{\Lapa^r}}

\newcommand{\bX}{\bordered{X}}

\newcommand{\bY}{\bordered{Y}}

\nc{\tM}{\widetilde{M}}
\nc{\tX}{\widetilde{X}}
\nc{\ti}{{\tilde\imath}}
\nc{\tj}{{\tilde\jmath}}

\newcommand{\dt}[1]{\widetilde{#1}^{\mathsf d}}
\newcommand{\dtd}[1]{\widetilde{#1}^{\mathsf d,\times}}
\nc{\dtM}{\dt M}
\nc{\dtdM}{\dtd M}
\nc{\dti}{\tilde\imath^{\mathsf d}}

\newcommand{\rmpt}{\mathrm{pt}}
\newcommand{\pt}{\st\rmpt}
\newcommand{\rb}{\mathsf{rb}}
\newcommand{\pb}{\mathsf{pb}}
\newcommand{\nd}{\mathsf{nd}}
\newcommand{\sm}{\mathsf{sm}}
\newcommand{\sph}{\mathsf{sph}}
\nc{\st}[1]{{\{{#1}\}}}
\nc{\bP}{\mathbb{P}}
\nc{\into}{\hookrightarrow}
\nc{\fR}{{\R_\infty}}
\nc{\cS}{\bclose{S}}
\nc{\RB}[2][N]{#2_{#1}^{\rb}}
\nc{\prb}{p_{\rb}}
\nc{\PB}[2][N]{#2_{#1}^{\pb}}
\nc{\ppb}{p_{\pb}}
\nc{\ND}[2][N]{#2_{#1}^{\nd}}
\nc{\pnd}{p_{\nd}}
\nc{\snd}{s_{\nd}}
\nc{\dND}[2][N]{\bdot{#2}_{#1}^\nd}
\nc{\psm}{p_{\sm}}

\newcommand{\Top}{\mathcal T\!op}
\newcommand{\bTop}{b\text{-}\Top}

\newcommand{\bdd}[1]{\mathsf{#1}}
\newcommand{\bds}{\bdd S}
\newcommand{\bdt}{\bdd T}
\newcommand{\bdx}{\bdd X}
\newcommand{\bdy}{\bdd Y}
\newcommand{\bdz}{\bdd Z}

\newcommand{\Vi}{V_\infty}
\newcommand{\dVi}{{\bdot V}_\infty}
\newcommand{\Wi}{V^*_\infty}

\newcommand{\tu}{\widetilde u}
\newcommand{\tx}{\widetilde x}

\newcommand{\Enu}{\enh\nu}
\newcommand{\Emu}{\enh\mu}
\newcommand{\Esm}{\enh\sigma}

\newcommand*\bigcdot{\mathpalette\bigcdot@{.5}}
\newcommand*\bigcdot@[2]{\mathbin{\vcenter{\hbox{\scalebox{#2}{$\m@th#1\bullet$}}}}}
\newcommand{\bdot}[1]{{\accentset{\bigcdot}{#1}}\vphantom{#1}}

\newcommand{\Gm}{\R^\times}
\newcommand{\Gmp}{\R^\times_{>0}}
\newcommand{\bGmp}{\inb{(\Gmp)}}

\newcommand{\XY}{{\rm prod}}

\newcommand{\SV}{\mathbb{S}V}

\newcommand{\bbM}{\mathsf{M}}
\newcommand{\ccM}{C}
\newcommand{\obM}{\unbordered{\bbM}}
\newcommand{\cbM}{\bclose{\bbM}}

\newcommand{\bbN}{\mathsf{N}}
\newcommand{\obN}{\unbordered{\bbN}}
\newcommand{\cbN}{\bclose{\bbN}}

\newcommand{\bbS}{\mathsf{S}}
\newcommand{\obS}{\unbordered{\bbS}}
\newcommand{\cbS}{\bclose{\bbS}}

\newcommand{\oo}{\unbordered}
\newcommand{\of}{\oo f}
\nc{\oloG}[1][\of]{{\overset{\ms{4mu}\rule[-.05ex]{1.6ex}{.115ex}}{\Gamma}}_{%
\ms{-3mu}#1}}

\nc{\ake}[1][2ex]{\rule[-.5ex]{0ex}{#1}}
\nc{\akew}[1][2ex]{\rule[-1ex]{#1}{0ex}}
\nc{\aked}[2][2ex]{\rule[{#1}]{0ex}{#2}}

\nc{\la}{\lambda}

\makeatother


\begin{document}

\title{Enhanced specialization and microlocalization}

\author[A.~D'Agnolo]{Andrea D'Agnolo}
\address[Andrea D'Agnolo]{Dipartimento di Matematica\\
Universit{\`a} di Padova\\
via Trieste 63, 35121 Padova, Italy}
\thanks{The research of A.D'A.\
was partially supported by GNAMPA/INdAM.  He acknowledges the kind hospitality at RIMS of
Kyoto University during the preparation of this paper.}
\email{dagnolo@math.unipd.it}

\author[M.~Kashiwara]{Masaki Kashiwara}
\thanks{The research of M.K.\
was supported by Grant-in-Aid for Scientific Research (B)
15H03608, Japan Society for the Promotion of Science}
\address[Masaki Kashiwara]{Research Institute for Mathematical Sciences, Kyoto University,
Kyoto 606-8502, Japan \& Korea Institute for Advanced Study, Seoul 02455, Korea}
\email{masaki@kurims.kyoto-u.ac.jp}

\keywords{Sato's specialization and microlocalization, Fourier-Sato transform, irregular Riemann-Hilbert
correspondence, enhanced perverse sheaves}
\subjclass[2010]{Primary 32C38, 35A27, 14F05}


\maketitle

\begin{abstract}
Enhanced ind-sheaves provide a suitable framework for the irregular Riemann-Hilbert correspondence.
In this paper, we show how Sato's specialization and microlocalization functors have a natural enhancement, and discuss some of their properties.
\end{abstract}

\tableofcontents


\section{Introduction}

\subsection{}\label{sse:introMN}
Let $M$ be a real analytic manifold, and $N\subset M$ a closed submanifold. 
The normal deformation (or deformation to the normal cone) of $M$ along $N$ is a real analytic manifold $\ND M$ endowed with a map $(p,s)\colon \ND M\to M\times\R$, such that $\opb s(\R_{\neq 0}) \isoto M\times\R_{\neq 0}$ and $\opb s(0)$ is identified with the normal bundle $T_NM$.

Sato's specialization functor $\nu_N$, defined through $p\colon\ND M\to M$, associates to a sheaf\footnote{We abusively call \emph{sheaf} an object of the bounded derived category $\BDC(\field_M)$ of sheaves of $\field$-vector spaces on $M$, for a fixed base field $\field$.} $F\in\BDC(\field_M)$ a conic sheaf on $T_NM$, describing the asymptotic behaviour of $F$ along $N$. 
Let $\bdot T_NM$ be the complement of the zero section, identified with $N$.
One has $\nu_N(F)|_N\simeq F|_N$, and $\nu_N(F)|_{\bdot T_NM}$ only depends on $F|_{M\setminus N}$.

Sato's microlocalization functor $\mu_N$ is obtained from $\nu_N$ by Fourier-Sato transform, and provides a tool for the microlocal analysis of $F$ on the conormal bundle $T^*_NM$.

\subsection{}\label{sse:startdef}
In this paper, we will define the enhanced version of 
the specialization and microlocalization functors.
With notations as in \S\ref{sse:introMN}, this proceeds as follows.

We start by showing that there exists a (unique) real analytic bordered space $\inb{(\ND M)}$ such that the map $p\colon\inb{(\ND M)}\to M$ is semiproper. 
(We call this a bordered compactification of $p$.)

Mimicking the classical construction, with $\ND M$ replaced by $\inb{(\ND M)}$, we get an enhancement of Sato's specialization.
This associates to an enhanced ind-sheaf $K\in\BEC M$ a conic enhanced ind-sheaf  $\Enu_N(K)$ on the bordered compactification $\inb{(T_NM)}$ of $T_NM\to N$.
Consider the bordered space $\inb{(M\setminus N)}\defeq(M\setminus N,M)$.
One has $\Enu_N(K)|_N\simeq K|_N$, and $\Enu_N(K)|_{\inb{(\bdot T_NM)}}$ depends on $K|_{\inb{(M\setminus N)}}$, not only on $K|_{M\setminus N}$.

Then, using the enhanced Fourier-Sato transform $\lap(\cdot)$, we get the enhanced microlocalization functor $\Emu_N\defeq\lap\Enu_N$, with values in conic enhanced ind-sheaves on $\inb{(T^*_NM)}$.

We establish some functorial properties of the functors $\Enu_N$ and $\Emu_N$. These are for the most part analogous to properties of the classical functors $\nu_N$ and $\mu_N$, but often require more geometrical proofs.

\subsection{}
In view of future applications to the Fourier-Laplace transform of holonomic $\D$-modules, we also discuss the following situation.

Let $\tau\colon V\to N$ be a vector bundle, and $\inb V$ its bordered compactification.
In this setting, we consider the natural enhancement of the smash functor from \cite[\S6.1]{DHMS18}, described as follows. Let $\SV\defeq\bl(\R\times V)\setminus(\st0\times N)\br/\Gmp$ be the fiberwise sphere compactification of $\tau$, 
and identify $V$ with the hemisphere $(\R_{>0}\times V)/\Gmp$. Consider the hypersurface $H\defeq\partial V\subset \SV$, and identify $\bdot V\seteq V\setminus N$
with the half of $ T_H(\SV)$ pointing to $V$. Then, the restriction of $\Enu_H$ to $\inb{(\bdot V)}$ can be thought of as a ``specialization at infinity'' on $V$. The enhanced smash functor $\Esm_V$ (see \S\ref{sse:smash}) 
provides an extension of $\Enu_H$ from $\inb{(\bdot V)}$ to $\inb V$.

If $K$ is an enhanced ind-sheaf on $\inb V$, with the natural identification $\inb{V^*}\simeq\inb{(T^*_NV)}$ one has (see Proposition~\ref{pro:musm})
\[
\Esm_\W(\lap K) \simeq \Emu_N(K).
\]

\subsection{}
The contents of this paper are as follows.

We introduce in Section~\ref{se:bord} the notion of bordered compactification. This is a relative analogue of the classical one-point compactification. Then, we show that the normal deformation $p\colon\ND M\to M$ has a bordered compactification $\inb{(\ND M)}$ in the category of subanalytic bordered spaces.

Using the bordered normal deformation, and after recalling some notations in Section~\ref{se:not}, the enhanced specialization is introduced and studied in Section~\ref{se:nu}. 
We also consider an analogous construction, attached to the real oriented blow-up of $M$ with center $N$. Moreover, we discuss the notion of conic enhanced ind-sheaf.

Section~\ref{se:mu} establishes some complementary results on the enhanced Fourier-Sato transform, and uses it to enhance the microlocalization functor. 
Finally, in Section~\ref{se:smash}, we link the microlocalization along the zero section of a vector bundle with the so-called smash functor.

\section{Bordered normal deformation}\label{se:bord}

Here, after recalling the notion of bordered space from \cite[\S3]{DK16}, we introduce the notion of bordered compactification. We then show that the deformation to the normal cone, for which we refer to \cite[\S4.1]{KS90}, admits a canonical bordered compactification. 

\medskip
In this paper, a good space is a topological space which is Hausdorff, locally compact, countable at infinity, and with finite soft dimension. 

\subsection{Bordered spaces}
Denote by $\Top$ the category of good spaces and continuous maps. 

Denote by $\bTop$ the category of bordered spaces, whose objects are pairs $\bbM=(M,\ccM)$ with $M$ an open subset of a good space $C$. Set $\obM\defeq M$ and $\cbM\defeq C$.
A morphism $f\colon\bbM\to\bbN$ in $\bTop$ is a morphism $\of\colon \obM\to \obN$ in $\Top$ such that the projection $\oloG\to \cbM$ is proper. Here, $\oloG$ denotes the closure in $\cbM\times\cbN$ of the graph $\Gamma_{\oo f}$ of $\of$.

The functor $\bbM\mapsto\obM$ is right adjoint to the embedding $\Top\to\bTop$, $M\mapsto(M,M)$.  
We will write for short $M=(M,M)$. 
Note that $\bbM\mapsto\cbM$ is not a functor.

We say that $f\colon\bbM\to\bbN$ is
{\em semiproper} if
$\oloG\to \cbN$ is proper.
We say that $\bbM$ is semiproper if so is the natural morphism $\bbM\to\pt$.
We say that $f\colon\bbM\to\bbN$ is {\em proper} if
it is semiproper and $\oo f\colon \obM\to \obN$ is proper.

For any bordered space $\bbM$ there are canonical morphisms
\[
\xymatrix{
\obM \ar[r]^{i_\bbM} &\bbM  \ar[r]^{j_\bbM} & \cbM.
}
\]
Note that $j_\bbM$ is semiproper.

By definition, a subset $Z$ of $\bbM$ is a subset of $\obM$.
We say that $Z$ is open (resp.\ locally closed) if it is so in $\obM$.
For a locally closed subset $Z$ of $\bbM$, we set $\bZ=(Z,\overline Z)$ where $\overline Z$ is the closure of $Z$ in $\cbM$.  Note that, for an open subset $U\subset \bbM$, we have $\inb U \simeq (U,\cbM)$. 

We say that $Z$ is relatively compact in $\bbM$ if it is contained in a compact subset of $\cbM$.
Then, for a morphism of bordered spaces   $f\cl \bbM\to \bbN$,
the image $\oo f(Z)$ is relatively compact in $\bbN$.
In particular, the condition of $Z$ being relatively compact in $\bbM$ does not depend on the choice of $\cbM$.

\subsection{Bordered compactification}\label{sse:bordcomp}

Let $\bbS$ be a bordered space.
Denote by $\bTop_{\,\bbS}$ the category of bordered spaces over $\bbS$, and by $\mathcal T\!op_{\,\obS}$ the category of good spaces over $\obS$.

\Lemma\label{lem:semiproper}
Let $\bbM$ and
$\bbN$ be bordered spaces over $\bbS$.
If $\bbN$ is semiproper over $\bbS$,
then the natural morphism
\[
\Hom[\bTop_{\,\bbS}](\bbM,\bbN) \to \Hom[\mathcal T\!op_{\,\obS}](\obM,\obN)
\]
is an isomorphism.
\enlemma

\begin{proof}
Denote by $p\colon\bbM\to\bbS$ and $q\colon\bbN\to\bbS$ the given morphisms.
In order to prove the statement, it is enough to show that any  continuous map $\oo f\colon\obM\to\obN$ which enters the commutative diagram
\[
\xymatrix@R=1ex{
\obM \ar[rr]^{\oo f} \ar[dr]_{\oo p} && \obN \ar[dl]^{\oo q} \\ & \obS
}
\]
induces a morphism $f\colon\bbM\to\bbN$.
 That is, we have to prove that the map $\oloG\to\cbM$ is proper.

It is not restrictive to assume that $\oo p$ extends to a map $\cbM\to\cbS$.
Since $\Gamma_{\oo f}\subset \obM\times \obN$ is included in $\obM\times_\obS \obN$, its closure $\oloG$ in $\cbM\times\cbN$ is included in $\cbM\times_\cbS\olG[\oo q]$. Since $q$ is semiproper, the map $\olG[\oo q]\to\cbS$ is proper. Hence so is the map $\oloG\to\cbM$.
\end{proof}

\Prop\label{pro:bordcomp}
Let $M$ be a good space, and $p\cl M\to \bbS$ a morphism of bordered spaces.
Then there exists a bordered space
$\bM$, with $(M_\infty)^\circ=M$, such that $\oo p$ induces a semiproper morphism $\inb p\cl \bM\to\bbS$.
Such an $\bM$ is unique up to a unique isomorphism.
\enprop

\begin{definition}
With notations as above, $\bM$ is called the {\em bordered compactification} of $M$ over $\bbS$.
\end{definition}

\Proof[Proof of Proposition~\ref{pro:bordcomp}]
Set $\cM\defeq M\sqcup \cbS$, and endow it with the following topology.
Let $i\cl \cbS\into \cM$ and $j\cl M\into \cM$ be the inclusions.
Consider the map $\oo p\colon M\to\obS$.
For $s\in \cbS$, a neighborhood of $i(s)$ is a subset of $\cM$ containing
$i(V)\cup j\bl \oo p{}^{-1}(V\cap \obS)\setminus K\br$, where $V\subset \cbS$ is a neighborhood 
of $s$, and $K\subset M$ is a compact subset.
For $x\in M$, a neighborhood of $j(x)$ is a subset of $\cM$ containing
$j(U)$, where $U\subset M$ is a neighborhood of $x$.
It is easy to check that $\cM$ is a good topological space containing $M$ as an open subset.

Define $\cp\colon\cM\to\cbS$ by $\cp(j(x))=\oo p(x)$ for $x\in M$, and  $\cp(i(s))=s$ for $s\in \cbS$. Then, $\cp$ is proper.
It follows that, setting $\bM\defeq(M,\cM)$, the morphism $p$ extends to a semiproper morphism $\bM\to \cbS$.
Such a morphism factors as $\bM\To[p_\infty] \bbS\To[j_{\bbS}] \cbS$, and hence also $p_\infty$ is semiproper.

This proves the existence. Uniqueness follows from Lemma~\ref{lem:semiproper}, by considering the commutative diagram
\[
\xymatrix@R=1ex{
M \ar[rr]^{i_M} \ar[dr]_{p} && \bM \ar[dl]^{p_\infty} \\ & \bbS.
}
\]
\QED

\subsection{Blow-ups and normal deformation}\label{sse:blownorm}
Let $M$ be a real analytic manifold and $N\subset M$ a closed submanifold.
Denote by $\tau\colon T_NM\to N$ the normal bundle, and by $\bdot T_NM\subset T_NM$ the complement of the zero-section.
Recall that the multiplicative groups $\Gm\defeq\R_{\neq 0}$ and $\Gmp\defeq\R_{> 0}$  act freely on $\bdot T_NM$. Denote by $S_NM\defeq\bdot T_NM/\Gmp$ the sphere normal bundle, and by $P_NM\defeq\bdot T_NM/\Gm$ the projective normal bundle.

\begin{notation}\label{not:blowND}
\begin{itemize}
\item[(i)]
Denote by  $\prb\cl\RB{M}\to M$ the real oriented blow-up of $M$ with center $N$.
Recall that $\RB{M}$ is a subanalytic space, that $\prb$
induces an isomorphism
$\prb^{-1}(M\setminus N)\isoto M\setminus N$, and that
$\prb^{-1}(N)=S_NM$. In fact, $\RB M$ is a real analytic manifold with boundary $S_NM$.
This is pictured in the commutative diagram
\begin{equation}
\label{eq:RBMdiagram}
\xymatrix@R=3ex{
S_NM \ar@{^(->}[r] \ar[d] & \RB M \ar[d]_{\prb} & M\setminus N \ar@{_(->}[l] \ar@{_(->}[dl] \\
N \ar@{^(->}[r] \ar@{}[ur]|-\square & M\,.
}
\end{equation}
\item[(ii)]
Denote by $\ppb\cl\PB{M}\to M$ the real projective blow-up of $M$ with center $N$. Recall that $\PB{M}$ is a real analytic manifold, that $\ppb$
induces an isomorphism
$\ppb^{-1}(M\setminus N)\isoto M\setminus N$, and that
$\ppb^{-1}(N)=P_NM$. 
This is pictured in the commutative diagram
\begin{equation}
\label{eq:PBMdiagram}
\xymatrix@R=3ex{
P_NM \ar@{^(->}[r] \ar[d] & \PB M \ar[d]_{\ppb} & M\setminus N \ar@{_(->}[l] \ar@{_(->}[dl] \\
N \ar@{^(->}[r] \ar@{}[ur]|-\square & M\,.
}
\end{equation}
\end{itemize}
We have a natural commutative diagram
\[
\xymatrix@R=2ex@C=1em{
\PB M \ar[dr]_\ppb && \RB M \ar[dl]^\prb \ar[ll] \\
&M.
}
\]
\begin{itemize}
\item[(iii)]
Denote by $(\pnd,\snd)\cl \ND{M}\to M \times \R$ the normal deformation (or deformation  to the normal cone) of $M$ along $N$ (see \cite[\S4.1]{KS90}).
Recall that  $\ND{M}$ is a real analytic manifold, and that $(\pnd,\snd)$
induces isomorphisms $\pnd^{-1}(M\setminus N) \isoto (M\setminus N)\times\R_{\neq 0}$ and $\snd^{-1}(\R_{\neq 0}) \isoto M\times\R_{\neq 0}$. 
One also has $\snd^{-1}(\st{0})=T_NM$.
This is pictured in the commutative diagram
\begin{equation}
\label{eq:NDMdiagram}
\xymatrix@R=4ex@C=8ex{
T_NM \ar@{^(->}[r]^-{i_\nd} \ar[d] & \ND M \ar[d]_-{(\pnd,\snd)} & M\times\R_{\neq 0} \ar@{_(->}[l] \ar@{_(->}[dl]\\
M\times\st0 \ar@{^(->}[r] \ar@{}[ur]|-\square & M\times\R\,.
}
\end{equation}
There is a natural action of $\Gm$ on $\ND M$, extending that on $T_NM$.
The map $\snd\colon\ND M\to\R$ is smooth and equivariant with respect to the action of $\Gm$ on $\R$ given by $c\cdot s = c^{-1}s$. 
For $\Omega\seteq\opb\snd(\R_{>0})\isoto M\times\R_{>0}$, consider the commutative diagram
\begin{equation}
\label{eq:normdefMN}
\xymatrix@R=3ex@C=8ex{
T_NM \ar@{^(->}[r]^-{i_\nd} \ar[d]_\tau & \ND M \ar[d]_{\pnd} & \Omega \ar@{_(->}[l]_-{j_\nd} \ar[dl]^-{p_\Omega} \\
N \ar@{^(->}[r]_-{i_N} \ar@{}[ur]|-\square & M\,,
}
\end{equation}
where we set $p_\Omega\defeq\pnd|_\Omega$.
Note that $\overline\Omega=\opb\snd(\R_{\geq0})=\Omega\dunion T_NM$.
For $S\subset M$, the \emph{normal cone} to $S$ along $N$ is defined by
\begin{equation}\label{eq:CN}
C_N(S) \defeq T_NM\cap\overline{p_\Omega^{-1}(S)}.
\end{equation}
\item[(iv)] 
Denote by $\widetilde\Omega$ the complement of $\opb\pnd(N)\setminus\bdot T_NM$ in $\overline\Omega$, i.e.\ $\widetilde\Omega=\bl(M\setminus N)\times\R_{>0}\br\dunion\bdot T_NM$.
Thus, $\widetilde\Omega$ is an open subset of $\overline\Omega$ which is invariant by the action of $\Gmp$, and enters the commutative diagram
\begin{equation}
\label{eq:dNDMdiagram}
\xymatrix@R=3ex{
T_NM \ar@{^(->}[r] & \overline\Omega \ar[dr]^-{\pnd|_{\overline\Omega}} \\
\bdot T_NM \ar@{^(->}[r] \ar@{}[ur]|-\square \ar@{^(->}[u] \ar[d] & \widetilde\Omega \ar@{^(->}[u] \ar[d]^-\gamma  & M \,. \\
S_NM \ar@{^(->}[r] \ar@{}[ur]|-\square & \RB M  \ar[ur]_{\prb}
}
\end{equation}
Note that $\gamma\colon\widetilde\Omega\to\RB M$ is a principal $\Gmp$-bundle.
\end{itemize}
\end{notation}

\begin{remark}\label{re:blowcoord}
Let us illustrate the above constructions in local coordinates.
Consider a chart $M\supset U\to[\varphi]\R^m_x\times\R^n_y$ such that $N\cap U=\varphi^{-1}(\st{x=0})$. 

\smallskip\noindent (i)
Let $\Gmp$ act on $\R^m_v\times\R^n_y\times\R_s$ by $c\cdot(v,y,s)=(c v,y,c^{-1}s)$.
Then $\prb^{-1}(U)\subset\RB M$ has $\Gmp$-homogeneous coordinates $[v,y,s]$ with $v\neq0$, $s\geq 0$, and $(sv,y)\in\varphi(U)$. One has $\prb([v,y,s])=(sv,y)$.

\smallskip\noindent (ii)
Similarly, replacing the action of $\Gmp$ by that of $\Gm$, the open subset $\ppb^{-1}(U)\subset\PB M$ has $\Gm$-homogeneous coordinates $[v,y,s]$ with $v\neq0$ and $(sv,y)\in\varphi(U)$. One has $\ppb([v,y,s])=(sv,y)$.

\smallskip\noindent (iii)
The open subset $\pnd^{-1}(U)\subset\ND M$ has coordinates $(v,y,s)\in\R^{m+n+1}$, with $(sv,y)\in\varphi(U)$. One has $\pnd(v,y,s)= (sv,y)$ and $\snd(v,y,s)=s$.
The action of $\Gm$ on $\ND M$ is given by $c\cdot(v,y,s)=(c v,y,c^{-1}s)$. One has $\Omega\cap\pnd^{-1}(U)=\st{s>0}$ and $\overline\Omega\cap\pnd^{-1}(U)=\st{s\geq0}$.

\smallskip\noindent (iv)
One has $\widetilde\Omega\cap\pnd^{-1}(U)=\{(v,y,s)\semicolon s\geq0,\ v\neq 0\}$ and
$\gamma(v,y,s)=[v,y,s]\in\RB M$.
\end{remark}

\subsection{Bordered normal deformation}\label{se:bdnd}
Let $M$ be a real analytic manifold and $N\subset M$ a closed submanifold.
Set $X=M\times \RP$ and $Y=N\times\st0\subset X$, where
$\RP\seteq\R\cup\{\infty\}$ is the real projective line.
There is a natural commutative diagram
\begin{equation}\label{eq:NDPB}
\xymatrix@R=3ex{
\ND M \ar@{^(->}[r] \ar[d]_{(\pnd,\snd)} & \PB[Y]{X} \ar[d]^{\ppb} \\
M \times\R \ar@{^(->}[r] \ar@{}[ur]|-\square & M \times\RP\rlap{${}=X,$}
}
\end{equation}
where the bottom arrow is induced by the inclusion of the affine chart $\R\subset\RP$, and the top arrow is the embedding described as follows.
Recall that $\ND M=\opb\snd(\R_{\neq 0})\dunion T_NM$.
The natural identifications $\snd\colon\snd^{-1}(\R_{\neq0})\isoto M\times\R_{\neq0}$ and $\ppb\colon\ppb^{-1}(X\setminus Y) \isoto (M\times\RP)\setminus(N\times\st0)$, provide an open embedding $\snd^{-1}(\R_{\neq0}) \subset \PB[Y]X$.
This extends to $\ND[N]M$ by sending $v\in T_NM$ to $[v,1]\in P_YX = \ppb^{-1}(Y)$.
Note that one has
\begin{align*}
\PB[Y]X\setminus\ND{M}
&= \ppb^{-1}\bl M\times\st\infty\br\sqcup\overline{\ppb^{-1}\bl (M\setminus N)\times\st 0\br} \\
&= \ppb^{-1}\bl M\times\st\infty\br\sqcup\ppb^{-1}\bl (M\setminus N)\times\st 0\br \\
&\phantom{= \ppb^{-1}\bl M\times\st\infty\br\,}\sqcup P_{N\times\st0}(M\times\st0).
\end{align*}

\begin{remark}
Let us describe the above constructions in the situation of Remark~\ref{re:blowcoord}.
Consider the action of $\Gm$ on $\R^m_v\times\R^n_y\times\R_r\times\R_s$ given by $c\cdot(v,y,r,s)=(c v,y,c r,c^{-1}s)$.
Let $\R=\RP\setminus\{\infty\}$ be the affine chart.
Then $\ppb^{-1}(U\times\R)\subset\PB[Y]X$ has $\Gm$-homogeneous coordinates $[v,y,r,s]$ with $(v,s)\neq(0,0)$ and $(sv,y)\in\varphi(U)$. 
One has $\ppb([v,y,r,s])=(sv,y,sr)$.
The embedding $\ND M\into \PB[Y]{X}$ is given by $(v,y,s)\mapsto[v,y,1,s]$.
\end{remark}

Recall from \cite[\S5.4]{DK16} that a real analytic bordered space is a bordered space $\bbM$ such that $\cbM$ is a real analytic manifold, and $\obM\subset\cbM$ is a subanalytic open subset. A morphism $f\colon\bbM\to\bbN$ of real analytic bordered spaces is a morphism of bordered spaces such that $\oo f$ is a real analytic map, and $\overline{\Gamma_{\oo f}}$ is a subanalytic subset of $\cbM\times\cbN$.

\begin{lemmadefinition}
The bordered compactification of $\ND{M}$ over $M$ has a realization in the category of real analytic bordered spaces by $\inb{(\ND M)} \defeq (\ND M,\PB[Y]X)$, using the open embedding \eqref{eq:NDPB}.
Note that the projection $\PB[Y]X\to M$ is proper.
\end{lemmadefinition}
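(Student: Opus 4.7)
The plan is to verify directly that the pair $(\ND M,\PB[Y]X)$ satisfies the properties required of a bordered compactification of $\ND M$ over $M$, and then to invoke the uniqueness assertion from Proposition~\ref{pro:bordcomp}.

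First, I would check that $(\ND M,\PB[Y]X)$ is a real analytic bordered space. The ambient $\PB[Y]X$ is a real analytic manifold, being the real projective blow-up of the real analytic manifold $X=M\times\RP$ along the closed submanifold $Y=N\times\st0$. In the local homogeneous coordinates $[v,y,r,s]$ on $\ppb^{-1}(U\times\R)$ recalled just before the statement, the embedding \eqref{eq:NDPB} identifies $\ND M\cap\pnd^{-1}(U)$ with the open chart $\st{r\neq 0}$ (normalizing $r=1$). This realizes $\ND M$ as an open subanalytic subset of $\PB[Y]X$.

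Next, I would verify the properness of $\PB[Y]X\to M$. The blow-up map $\ppb\colon\PB[Y]X\to X=M\times\RP$ is proper by construction (its exceptional fibers are compact projective spaces), and the projection $M\times\RP\to M$ is proper because $\RP$ is compact; the composition is therefore proper.

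From this, the bordered morphism $\inb p\colon\inb{(\ND M)}\to M$ induced by $\pnd$ is semiproper: the closure in $\PB[Y]X\times M$ of the graph of $\oo{\inb p}=\pnd$ is contained in the graph of the proper extension $\PB[Y]X\to M$, and hence projects properly to $M$. Real-analyticity of $\pnd$ and subanalyticity of the graph-closure in $\PB[Y]X\times M$ are clear, so $\inb p$ is a morphism of real analytic bordered spaces. Uniqueness of $\inb{(\ND M)}$ with these properties then follows from Proposition~\ref{pro:bordcomp} together with Lemma~\ref{lem:semiproper}. The main obstacle is the careful geometric check that \eqref{eq:NDPB} is an open subanalytic embedding, which reduces to the explicit local chart description via the homogeneous coordinates on $\PB[Y]X$.
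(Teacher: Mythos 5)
Your argument is correct and is exactly the (implicit) reasoning the paper relies on, since the paper states this Lemma--Definition without a separate proof: the explicit computation of $\PB[Y]X\setminus\ND M$ and the local coordinate description in the preceding Remark are precisely the ingredients you invoke. Two small remarks: (1)~there is a typo in the paper's Remark (the constraint on the homogeneous coordinates should read $(v,r)\neq(0,0)$ rather than $(v,s)\neq(0,0)$, since $v,r$ are the coordinates scaled by $c$ under the $\Gm$-action); your identification of the image with the open chart $\st{r\neq 0}$ is correct regardless. (2)~For subanalyticity of $\oloG$ you implicitly use that $\ND M$ is dense in $\PB[Y]X$, so that $\oloG$ actually equals the (real analytic, hence subanalytic) graph of the proper extension $\PB[Y]X\to M$; this is fine because, as noted in the paper, the complement $\PB[Y]X\setminus\ND M$ is a union of lower-dimensional strata.
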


Note that the closure $\overline{T_NM}$ of $T_NM$ in $P_YX$ is the projective compactification of $T_NM$ along the fibers of $\tau\colon T_NM\to N$.
Considering the bordered spaces $\inb{(T_NM)}\seteq(T_NM,\overline{T_NM})$ and $\inb\Omega\seteq(\Omega,\PB[Y]X)$, one has
the commutative diagram with Cartesian squares of bordered spaces semiproper over $M$
\begin{equation}
\label{eq:Mndinfty}
\xymatrix@R=4ex{
\inb{(T_NM)} \ar@{^(->}[r] \ar[d] & \inb{(\ND M)} \ar[d]^{(\pnd,\snd)} & \inb\Omega \ar@{_(->}[l] \ar[d]^\bwr \\
N\times\st0 \ar@{^(->}[r] \ar@{}[ur]|-\square & M\times\bR \ar@{}[ur]|-\square & \ar@{_(->}[l] M\times\inb{(\R_{>0})}\,.
}
\end{equation}
Note that the morphisms in the top row are $\bGmp$-equivariant.
Here, $\bGmp\seteq(\R_{>0},\ol\R)$ is a group object in $\bTop$.

\section{Review on enhanced ind-sheaves}\label{se:not}

We recall here some notions and results, mainly to fix notations, referring to the literature for details. In particular, we refer to \cite{KS90} for sheaves, to \cite{Tam18} (see also \cite{GS14,DK19}) for enhanced sheaves, to \cite{KS01} for ind-sheaves, and to \cite{DK16} (see also \cite{KS16L,KS16D,Kas16,DK19}) for bordered spaces and enhanced ind-sheaves.

\medskip

In this paper, $\field$ denotes a base field.

\subsection{Sheaves}

Let $M$ be a good space.

Denote by $\BDC(\field_M)$ the bounded derived category of sheaves of $\field$-vector spaces on $M$, and by $\tens$, $\opb f$, $\reim f$ and $\rhom$, $\roim f$, $\epb f$ the six operations. Here $f\colon M\to N$ is a morphism of good spaces.

For $S\subset M$ locally closed, we denote by $\field_S$ the extension by zero to $M$ of the constant sheaf on $S$ with stalk $\field$.

\subsection{Ind-sheaves}

Let $\bbM$ be a bordered space.

We denote by $\BDC(\ifield_\bbM)$ the bounded derived category of ind-sheaves of $\field$-vector spaces on $\bbM$, and by $\tens$, $\opb f$, $\reeim f$ and $\rihom$, $\roim f$, $\epb f$ the six operations. Here $f\colon \bbM\to \bbN$ is a morphism of bordered spaces.

We denote by $\iota_\bbM\colon\BDC(\field_\obM)\to\BDC(\ifield_\bbM)$ the natural embedding,
by $\alpha_\bbM$ the left adjoint of $\iota_\bbM$.
One sets $\rhom\defeq\alpha_\bbM\rihom$.

For $F\in \BDC(\field_\obM)$,
we often write simply $F$ instead of $\iota_\bbM F$
in order to make notations less heavy.

\subsection{Enhanced ind-sheaves}
Denote by $t\in\R$ the coordinate on the affine line, consider the two-point compactification $\overline\R\seteq\R\cup\st{-\infty,+\infty}$, and set $\bR\defeq(\R,\overline\R)$.
For $\bbM$ a bordered space, consider the projection 
\[
\pi_\bbM\colon \bbM\times\bR\to \bbM.
\]

Denote by $\BEC \bbM\defeq\BDC(\ifield_{\bbM\times\bR})/\opb\pi_\bbM\BDC(\ifield_\bbM)$ the bounded derived category of enhanced ind-sheaves of $\field$-vector spaces on $\bbM$. Denote by $\quot_\bbM\colon\BDC(\ifield_{\bbM\times\bR})\to\BEC\bbM$ the quotient functor.

For $f\colon \bbM\to \bbN$ a morphism of bordered spaces, set
\[
f_\R \defeq f\times\id_{\bR}\colon \bbM\times\bR\to \bbN\times\bR.
\]
Denote by $\ctens$, $\Eopb f$, $\Eeeim f$ and $\cihom$, $\Eoim f$, $\Eepb f$ the six operations for enhanced ind-sheaves.  
Recall that $\ctens$ is the additive convolution in the $t$ variable, and that the external operations are induced via $\quot$ by the corresponding operations for ind-sheaves, with respect to the morphism $f_\R$.
Denote by $\Edual_M$ the Verdier dual.

There is a natural decomposition $\BEC \bbM \simeq \BECp \bbM \dsum \BECm \bbM$,
there are embeddings
\[
\epsilon^\pm_\bbM\colon\BDC(\ifield_\bbM) \rightarrowtail \BECpm\bbM, \quad
F\mapsto \quot_\bbM\bl\field_{\st{\pm t\geq 0}}\tens\opb{\pi_\bbM}F\br,
\]
and one sets $\epsilon_\bbM(F)\defeq\epsilon_\bbM^+(F)\dsum\epsilon_\bbM^-(F)\in\BEC\bbM$.
Note that $\epsilon_\bbM(F)\simeq\quot_\bbM\bl\field_{\st{t= 0}}\tens\opb{\pi_\bbM}F\br$.

\subsection{Stable objects}\label{sse:stable}
Let $\bbM$ be a bordered space.
Set
\begin{align*}
\field_{\{t\gg0\}} &\defeq \indlim[a\rightarrow+\infty]\field_{\{t\geq a\}} \in\BDC(\ifield_{\bbM\times\bR}), \\
\Efield_\bbM &\defeq \quot_\bbM\field_{\{t\gg0\}} \in \BECp\bbM.
\end{align*}
An object $K\in\BEC\bbM$ is called \emph{stable} if $K\isoto \Efield_\bbM\ctens K$.

There is an embedding
\[
e_\bbM\colon \BDC(\ifield_\bbM) \rightarrowtail \BECp\bbM, \quad F\mapsto \Efield_\bbM \ctens \epsilon_\bbM(F),
\]
with values in stable objects.

\section{Specialization}\label{se:nu}

We discuss here the natural enhancement of the notions of conic object and Sato's specialization. For the corresponding classical notions we refer to \cite[\S3.7]{KS90} and \cite[\S4.2]{KS90}, respectively. We also link the specialization functor with the real oriented blow-up.

\subsection{Conic objects}
Recall that the bordered space $\bGmp \seteq (\R_{>0},\overline\R)$ is  semiproper and has a structure of  bordered group (i.e., is a group object in the category of bordered spaces).
Let $\bbM$ be a bordered space endowed with an action of $\bGmp$, and consider the maps
\[
p,\mu\colon \bbM\times\bGmp \to \bbM,
\]
where $p$ is the projection and $\mu$ is the action.
Similarly to \cite{KS90}, one says that an object $K\in\BEC \bbM$ is $\bGmp$-conic if there is an isomorphism
\[
\Eopb p K \simeq \Eopb\mu K.
\]
(Recall that if $\Eopb p K$ and $\Eopb\mu K$ are isomorphic, then there exists a unique isomorphism which restricts to the identity on $\bbM\times\{1\}$.)
Denote by $\BECcon \bbM$ the full triangulated subcategory of conic objects.

We say that a morphism  $\gamma\colon\bbM \to S$ is a principal $\bGmp$-bundle if  it is semiproper and if $\bbM$ is endowed with an action of $\bGmp$ such that
the underlying map $\oo\gamma\colon \obM \to S$ is a principal $\Gmp$-bundle.

\begin{lemma}\label{lem:Mprinc}
Let $\gamma\colon\bbM \to S$ be a principal $\bGmp$-bundle. Then, 
for $K\in\BECcon\bbM$ 
\begin{itemize}
\item[(i)]
one has
\[
K \simeq \Eopb\gamma \Eoim\gamma K\simeq \Eepb\gamma \Eeeim\gamma K.
\]
In particular, $K\simeq\Eopb\gamma H$ for some $H\in\BEC S$.
\item[(ii)]
One has $\Eeeim\gamma K\simeq\Eoim\gamma K[-1]$.
\end{itemize}
\end{lemma}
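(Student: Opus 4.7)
The approach is to set up a Cartesian square from the $\bGmp$-action and evaluate the resulting direct images along the fiber. The key square is
\[
\xymatrix@R=3ex{
\bbM \times \bGmp \ar[r]^-\mu \ar[d]_-p & \bbM \ar[d]^-\gamma \\
\bbM \ar[r]_-\gamma & S
}
\]
where $p$ and $\mu$ denote the projection and the action. Because $\oo\gamma$ is a principal $\Gmp$-bundle, the shear $(m,g)\mapsto(m,mg)$ identifies $\bbM\times\bGmp$ with $\bbM\times_S\bbM$, so the square is Cartesian in $\bTop$. Since $\gamma$ is semiproper, base change yields
\[
\Eopb\gamma\Eoim\gamma K \simeq \Eoim p\Eopb\mu K,\qquad \Eopb\gamma\Eeeim\gamma K \simeq \Eeeim p\Eopb\mu K,
\]
and $\bGmp$-conicity of $K$ replaces $\Eopb\mu K$ by $\Eopb p K$ on the right-hand sides.

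Viewing $p$ as the base change of $q\colon\bGmp\to\pt$ along $\pi\colon\bbM\to\pt$, the projection formula identifies these right-hand sides with $K\ctens\Eopb\pi(\Eoim q\,\Efield_\bGmp)$ and $K\ctens\Eopb\pi(\Eeeim q\,\Efield_\bGmp)$. The underlying space of $\bGmp$ is the contractible $\R_{>0}$, so $\Eoim q\,\Efield_\bGmp\simeq\Efield_\pt$; the semiproperness of $\bGmp=(\R_{>0},\overline\R)$ combined with $H^*_c(\R_{>0};\field)\simeq\field[-1]$ gives $\Eeeim q\,\Efield_\bGmp\simeq\Efield_\pt[-1]$. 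Substituting,
\[
\Eopb\gamma\Eoim\gamma K\simeq K\qquad\text{and}\qquad\Eopb\gamma\Eeeim\gamma K\simeq K[-1].
\]

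The first isomorphism is the first half of (i). Applied to $H\in\BEC S$ via $K=\Eopb\gamma H$ (automatically conic), the triangle identity for the adjunction $\Eopb\gamma\dashv\Eoim\gamma$ together with the conservativity of $\Eopb\gamma$ --- valid because $\oo\gamma$ is surjective --- show that the unit $H\to\Eoim\gamma\Eopb\gamma H$ is an isomorphism, so $\Eopb\gamma$ is fully faithful on $\BEC S$. Full faithfulness then lifts the second displayed isomorphism to $\Eeeim\gamma K\simeq\Eoim\gamma K[-1]$, which is (ii). Finally, since $\oo\gamma$ is a smooth submersion of relative dimension one with trivial orientation, $\Eepb\gamma\simeq\Eopb\gamma[1]$; combined with (ii) this gives $\Eepb\gamma\Eeeim\gamma K\simeq\Eopb\gamma(\Eoim\gamma K[-1])[1]\simeq\Eopb\gamma\Eoim\gamma K\simeq K$, the remaining half of (i).

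The main obstacle is the bordered-space computation $\Eeeim q\,\Efield_\bGmp\simeq\Efield_\pt[-1]$, which carries the $[-1]$ shift driving (ii); once this --- along with the bordered versions of base change and the projection formula --- is in place, the rest of the argument is a formal manipulation with adjunctions.
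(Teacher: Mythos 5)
Your overall scheme --- set up the Cartesian square from the action, use conicity to replace $\Eopb\mu$ by $\Eopb p$, and compute direct images along the fibers of $p$ --- matches the paper's, but the central computation contains a genuine gap. The ``projection formula'' step feeds in $\Efield_\bGmp$ where the $\ctens$-unit $\epsilon(\field_\bGmp)$ is required. The projection formula reads $\Eeeim p(\Eopb p K\ctens L)\simeq K\ctens\Eeeim p L$, and reducing $\Eeeim p\Eopb p K$ to this form needs $L$ to be the unit $\epsilon(\field)$. Done correctly one gets $K\ctens\epsilon_\bbM\bl\reim p\,\field_{\obM\times\R_{>0}}\br\simeq K[-1]$, which is exactly Sublemma~\ref{sub:pp}; with $\Efield$ one gets $K\ctens\Efield_\bbM[-1]$ instead, and $K\ctens\Efield_\bbM\simeq K$ holds only for \emph{stable} $K$. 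The lemma concerns arbitrary $\bGmp$-conic $K$, so this tacit substitution $K\ctens\Efield_\bbM\simeq K$ does not go through. Moreover, there is no projection formula for $\Eoim p$; to handle $\Eoim p\Eopb p K$ the paper converts $\Eopb p$ to $\Eepb p[-1]$ (using that $p$ is a smooth submersion with trivially oriented fibers) and applies the $\cihom$ half of Sublemma~\ref{sub:pp}.

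Two smaller issues. First, the base change $\Eopb\gamma\Eoim\gamma K\simeq\Eoim p\Eopb\mu K$ is not a consequence of semiproperness; the base changes that hold unconditionally for a Cartesian square are $\Eepb\gamma\Eoim\gamma\simeq\Eoim p\Eepb\mu$ and $\Eopb\gamma\Eeeim\gamma\simeq\Eeeim p\Eopb\mu$, and the mixed $\opb$-$\oim$ version requires smoothness of $\gamma$ and $\mu$. The paper instead converts $\Eopb\gamma$ to $\Eepb\gamma[-1]$ at the outset and uses the $\Eepb$-$\Eoim$ base change. Second, your route to (ii) via full faithfulness needs the extra verification that the isomorphism $\Eopb\gamma\Eoim\gamma K\simeq K$ produced by the computation coincides with the counit of the adjunction, before the triangle identity is of any use. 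The paper sidesteps this by proving (ii) directly from (i) and Sublemma~\ref{sub:pp}: $\Eeeim\gamma K\simeq\Eeeim\gamma\Eopb\gamma\Eoim\gamma K\simeq\Eoim\gamma K\ctens\epsilon_S\bl\reim\gamma\,\field_{\obM}\br\simeq\Eoim\gamma K[-1]$.
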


\begin{proof}
(i)
Since the proofs are similar, let us only discuss the first isomorphism.
Consider the cartesian diagram
\[
\xymatrix@R=3ex{
\bbM\times\bGmp \ar[r]^-p \ar[d]_\mu & \bbM \ar[d]^\gamma \\
\bbM \ar[r]_\gamma \ar@{}[ur]|-\square & S.
}
\]
Recalling that $K$ is $\bGmp$-conic, one has
\[
\Eopb\gamma \Eoim\gamma K
\simeq \Eepb\gamma \Eoim\gamma K[-1]
\simeq \Eoim p \Eepb\mu K[-1]
\simeq \Eoim p \Eepb p K[-1].
\]
Then, Sublemma~\ref{sub:pp} implies 
\begin{align*}
\Eopb\gamma \Eoim\gamma K
&\simeq \cihom\bl\epsilon_\bbM(\reim p\field_{\obM\times\R}),
K\br[-1] \\
&\simeq \cihom\bl\epsilon_\bbM(\field_\bbM[-1]),K\br[-1] 
\simeq K.
\end{align*}
\smallskip\noindent(ii)
One has 
\[
\Eeeim\gamma K
\simeq\Eeeim\gamma\Eopb\gamma\Eoim\gamma K
\simeq\Eoim\gamma K\ctens\epsilon_S\bl\reim \gamma\field_\obM\br
\simeq\Eoim\gamma K[-1],
\]
where the first isomorphism follows from (i), and the second isomorphism follows from Sublemma~\ref{sub:pp}.
\end{proof}

\begin{sublemma}\label{sub:pp}
Let $f\colon\bbM\to\bbN$ be a semiproper morphism of bordered spaces.
Then, for any $K\in\BEC \bbM$ one has
\begin{align*}
\Eeeim f\Eopb f K &\simeq K\ctens\epsilon_\bbN(\reim f\field_\obM), \\
\Eoim f\Eepb f K &\simeq \cihom\bl\epsi_\bbN(\reim f\field_\obM),K\br.
\end{align*}
\end{sublemma}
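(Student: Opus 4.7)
The plan is to establish the first isomorphism via a direct projection-formula computation on representatives, and then to deduce the second by an adjunction argument.

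For the first isomorphism, pick a representative $\tilde K\in\BDC(\ifield_{\bbN\times\bR})$ of $K$, so that $\Eeeim f \Eopb f K$ is represented by $\reeim{f_\R} \opb{f_\R}\tilde K$. The projection formula for ind-sheaves, together with base change for the cartesian square $\pi_\bbN \circ f_\R = f\circ \pi_\bbM$, should give
\[
\reeim{f_\R}\opb{f_\R}\tilde K \simeq \tilde K \tens \reeim{f_\R}\field_{\obM\times\R} \simeq \tilde K \tens \opb{\pi_\bbN}(\reim f \field_\obM).
\]
Both steps use that $f$ is semiproper (hence so is $f_\R=f\times\id_\bR$), which is precisely what legitimizes $\reim f\field_\obM$ as an ind-sheaf on $\bbN$ and permits base change.

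The next task is to identify $\quot_\bbN(\tilde K \tens \opb{\pi_\bbN} L)$ with $K \ctens \epsilon_\bbN(L)$ for any $L\in\BDC(\ifield_\bbN)$. Unfolding the definition of $\ctens$ as $\reeim{\mu}\bl\opb{q_1}\tilde K \tens \opb{q_2}(\field_{\{t=0\}}\tens\opb{\pi_\bbN}L)\br$ on $\bbN\times\bR\times\bR$ with $\mu(n,t_1,t_2)=(n,t_1+t_2)$, the factor $\field_{\{t_2=0\}}$ concentrates the convolution on $\{t_2=0\}$, where $\mu$ restricts to the identification with $\bbN\times\bR$; this yields $\tilde K \tens \opb{\pi_\bbN} L$. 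Setting $L\defeq \reim f\field_\obM$ and combining with the previous paragraph gives the first formula, with $A\defeq\epsilon_\bbN(\reim f\field_\obM)$.

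For the second isomorphism, invoke adjunction: the pairs $\Eopb f \dashv \Eoim f$ and $\Eeeim f \dashv \Eepb f$ compose to $\Eeeim f \Eopb f \dashv \Eoim f \Eepb f$, while for any fixed $A$ one has $(\dummy)\ctens A \dashv \cihom(A,\dummy)$. The natural isomorphism $\Eeeim f \Eopb f (\dummy) \simeq (\dummy)\ctens A$ of left adjoints just established transfers by uniqueness of right adjoints to a natural isomorphism $\Eoim f \Eepb f (\dummy) \simeq \cihom(A,\dummy)$, which, applied to $K$, is precisely the second formula. The main obstacle is the bookkeeping in the convolution calculation and ensuring that the projection formula and base change for $\reeim{f_\R}$ are applied at the correct level in the quotient category $\BEC{\bbN}$; these are by now standard in the bordered ind-sheaf framework but easy to mishandle, which is why the semiproperness hypothesis on $f$ must be tracked through each step.
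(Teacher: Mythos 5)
Your proof is correct and follows essentially the same line as the paper's. The paper stays entirely at the enhanced level: it inserts the unit $\epsilon_\bbM(\field_\obM)$, applies the enhanced projection formula $\Eeeim f(\Eopb f K \ctens L) \simeq K \ctens \Eeeim f L$, and then identifies $\Eeeim f \epsilon_\bbM(\field_\obM) \simeq \epsilon_\bbN(\reim f\field_\obM)$ by semiproperness; you do the same computation but descend to ind-sheaf representatives on $\bbM\times\bR$, which forces you to add the (correct but slightly longer) bookkeeping step showing $\quot_\bbN(\tilde K\tens\opb{\pi_\bbN}L)\simeq K\ctens\epsilon_\bbN(L)$. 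For the second formula, you invoke uniqueness of right adjoints, whereas the paper unfolds the adjunction as the co-projection formula $\Eoim f\cihom(L,\Eepb f K)\simeq\cihom(\Eeeim f L,K)$ -- these are the same argument. One small clarification worth adding: semiproperness of $f$ is not needed for the projection formula or base change themselves (those hold for any morphism of bordered spaces), but only to ensure that $\reeim f\field_\obM$ is represented by the genuine sheaf $\reim{\oo f}\field_\obM$, so that $\epsilon_\bbN(\reim f\field_\obM)$ makes sense as stated.
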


\begin{proof}
The first isomorphism follows from
\begin{align*}
\Eeeim f\Eopb f K 
&\simeq \Eeeim f \bl \Eopb f K\ctens \epsilon_\bbM(\field_\obM) \br \\
&\simeq  K\ctens \Eeeim f\bl\epsilon_\bbM(\field_\obM)\br  \\
&\simeq  K\ctens \epsilon_\bbN(\reim f\field_\obM),
\end{align*}
where the last isomorphism is due to the fact that $f$ is semiproper.

Similarly, the second isomorphism follows from
\begin{align*}
\Eoim f\Eepb f K 
&\simeq \Eoim f \cihom \bl \epsilon_\bbM(\field_\obM) , \Eepb f K \br \\
&\simeq \cihom \bl \Eeeim f\bl\epsilon_\bbM(\field_\obM)\br , K \br \\
&\simeq \cihom \bl \epsilon_\bbN(\reim f\field_\obM) , K \br.
\end{align*}
\end{proof}

\subsection{Conic objects on vector bundles}\label{sse:convect}
Let $\tau\colon V\to N$ be a real vector bundle over a good space $N$, and
let $\bdot V = V\setminus N$ be the complement of the zero-section.
Let $S_NV\to N$ be the associated sphere bundle defined by $S_NV\defeq\bdot V/\Gmp$. 
Consider the vector bundle $W\seteq\R\times V\to N$, and let $\bdot W \seteq W\setminus(\st0\times N)$ be the complement of the zero section.
The fiberwise sphere compactification $\SV\to N$ of $V\to N$ is the quotient $\SV\seteq\bdot W/\Gmp$, where the action is given by $c\cdot(u,x)=(c u,c x)$. The bordered compactification of $V\to N$ is given by $\inb V = (V,\,\SV)$. It is endowed with a natural $\bGmp$-action.
Consider the morphisms
\[
\xymatrix@C=8ex{
N \ar@{^(->}@<.5ex>[r]^--o & \inb V  \ar@<.5ex>[l]^(.4)\tau  & \inb{\bdot V} \ar[r]^--\gamma \ar@{_(->}[l]_j \ar@/^3ex/[ll]^-{\bdot\tau} & S_NV,
}
\]
where $o$ is the embedding of the zero section, $j$ is the open embedding, and $\gamma$ the quotient by the action of $\Gmp$.

\begin{notation}
For $K\in\BECcon{\inb V}$, set
\[
K^\sph \defeq \Eoim\gamma\Eopb j K \in \BEC{S_NV}.
\]
\end{notation}

\begin{lemma}\label{lem:Vconic}
For $K\in\BECcon{\inb V}$, one has the isomorphisms
\begin{itemize}
\item[(i)]
$\Eopb j K \simeq \Eopb \gamma K^\sph$,
\item[(ii)]
$\Eoim \tau K \simeq \Eopb o K$,
\item[(iii)]
$\Eeeim \tau K \simeq \Eepb o K$,
\end{itemize}
and a distinguished triangle
\begin{itemize}
\item[(iv)]
$\Eeeim {\bdot\tau} \Eopb\gamma K^\sph \to \Eepb o K \to \Eopb o K \to[+1]$.
\end{itemize}
\end{lemma}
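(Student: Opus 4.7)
The plan is as follows. Part (i) is a direct application of Lemma~\ref{lem:Mprinc}(i): the restriction $\Eopb j K$ is conic on $\inb{\bdot V}$ (conicness is inherited from $K$ by $\bGmp$-equivariant pullback), and $\gamma\cl\inb{\bdot V}\to S_NV$ is a principal $\bGmp$-bundle, so the lemma yields $\Eopb j K \simeq \Eopb\gamma \Eoim\gamma \Eopb j K = \Eopb\gamma K^\sph$.

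For parts (ii)--(iv), I use the open-closed decomposition $\inb V = N \dunion \inb{\bdot V}$ associated to the closed embedding $o$ and the open embedding $j$, which provides, for any $L\in\BEC{\inb V}$, the two standard distinguished triangles
\[
\Eeeim j \Eopb j L \to L \to \Eeeim o \Eopb o L \to[+1],
\qquad \Eeeim o \Eepb o L \to L \to \Eoim j \Eopb j L \to[+1].
\]
Since $\tau\circ o = \id_N$ and $o$ is closed, $\Eoim\tau\Eoim o = \Eeeim\tau\Eeeim o = \id$; since $\tau\circ j = \bdot\tau$ and $j$ is an open embedding, $\Eeeim\tau\Eeeim j = \Eeeim{\bdot\tau}$. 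Applying $\Eeeim\tau$ to the first triangle (with $L=K$) and substituting part~(i) yields
\[
\Eeeim{\bdot\tau} \Eopb\gamma K^\sph \to \Eeeim\tau K \to \Eopb o K \to[+1],
\]
which, combined with (iii), gives exactly the triangle of (iv). The statements (ii) and (iii) themselves are extracted by applying $\Eoim\tau$ (resp.\ $\Eeeim\tau$) to the appropriate triangle above, and each reduces to a vanishing: $\Eoim\tau\Eeeim j\Eopb j K \simeq 0$ for (ii), and $\Eeeim\tau\Eoim j\Eopb j K \simeq 0$ for (iii).

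The heart of the argument, and the main obstacle, is proving these two vanishings. They reflect the fact that $\tau\cl \inb V \to N$ is only semiproper, so $\Eoim\tau$ and $\Eeeim\tau$ differ and the gap must be absorbed by the conic hypothesis. After substitution of (i), the objects to analyze are pullbacks from the sphere at infinity $S_NV$ along $\gamma$, and the approach is to adapt the classical conic vanishing lemma on vector bundles (cf.~\cite[\S3.7]{KS90}) to the bordered enhanced setting, exploiting that the bordered compactification $\SV = V\dunion S_NV$ is $\bGmp$-equivariant and that conicness forces the boundary at infinity to collapse under the enhanced pushforward.
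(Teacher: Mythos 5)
Your structural plan matches the paper's exactly: part (i) is an immediate application of Lemma~\ref{lem:Mprinc}(i); parts (ii)--(iv) follow by applying $\Eoim\tau$ or $\Eeeim\tau$ to the two open-closed distinguished triangles for $(o,j)$, using $\tau\circ o=\id_N$, $\tau\circ j=\bdot\tau$, and substituting~(i), so that everything reduces to the two vanishings $\Eoim\tau\Eeeim j\Eopb\gamma H\simeq 0$ and $\Eeeim\tau\Eoim j\Eopb\gamma H\simeq 0$ for $H\in\BEC{S_NV}$. That reduction is exactly what the paper does, and you correctly identify it as the heart of the matter.

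However, the proof of these vanishings is genuinely missing, and the phrase ``adapt the classical conic vanishing lemma \ldots{} conicness forces the boundary at infinity to collapse'' does not describe an argument. The classical proof of $\roim\tau F\simeq\opb o F$ for conic sheaves relies on a retraction-type argument that does not transplant to the bordered enhanced setting precisely because $\tau$ is only semiproper, as you note. The paper's actual mechanism is different and geometric: it factors $j$ through the bordered real oriented blow-up $\inb{(\RB[N] V)}=(\RB[N]V,\,\RB\SV)$, writing $j=\prb\circ\tj$ and $\gamma=\tilde\gamma\circ\tj$ with $\tilde\gamma\colon\RB[N]V\to S_NV$ an $\R_{\geq0}$-fiber bundle. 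Properness of $\prb$ lets one trade $\Eeeim{}$ for $\Eoim{}$ there, and the vanishing then reduces to two purely sheaf-theoretic facts about $\tilde\gamma$: $\epb{\tilde\gamma}\field_{S_NV}\simeq\field_{\tj(\bdot V)}[1]$ and, crucially, $\reim{\tilde\gamma}\field_{\RB[N]V}\simeq 0$ (because $H^\ast_c(\R_{\geq0};\field)=0$). These are then fed into Sublemma~\ref{sub:pp}. Without this blow-up construction, or an equivalent substitute, your plan does not close; the appeal to $\bGmp$-equivariance of $\SV$ alone does not produce the needed cancellation, since the boundary $S_NV$ of $\SV$ is a fixed locus and does not ``collapse'' under any enhanced pushforward in an obvious way.

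In short: parts (i) and (iv), and the reduction of (ii)--(iii) to vanishing statements, are correct and mirror the paper. The vanishing statements themselves, which you rightly flag as the main obstacle, are left unproved, and the specific tool the paper introduces to prove them (the bordered blow-up $\inb{(\RB[N] V)}$ and the $\R_{\geq0}$-fibration $\tilde\gamma$) is absent from your proposal.
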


\begin{proof}
(i) follows from Lemma~\ref{lem:Mprinc}.
 
\smallskip\noindent(ii) We will adapt some arguments in the proof of \cite[Lemma~2.1.12]{KSIW}.
One has 
\begin{align*}
\Eoim \tau \Eoim o \Eopb o K &\simeq \Eopb o K, \\
\Eoim \tau \Eeeim j\Eopb j K &\simeq \Eoim \tau \Eeeim j\Eopb\gamma\Eoim\gamma\Eopb j K,
\end{align*}
where the last isomorphism follows from Lemma~\ref{lem:Mprinc}.
Applying $\Eoim\tau$ to the distinguished triangle
\[
\Eeeim j\Eopb j K \to K \to \Eoim o \Eopb o K \to[+1],
\]
we are thus left to prove
\begin{equation}
\label{eq:tjgH}
\Eoim\tau \Eeeim j \Eopb\gamma H \simeq 0,
\end{equation}
for $H = \Eoim\gamma\Eopb j K$. Let us prove it for an arbitrary
$H \in \BEC{S_NV}$. 

Denoting by $\inb{(\RB[N]V)} \defeq \bl \RB[N] V, \RB\SV \br$ the bordered compactification of the real oriented blow-up $\prb\colon\RB[N] V\to \Vi$, consider the commutative diagram
\[
\xymatrix@R=3ex@C=8ex{
\inb{\bdot V} \ar@<.5ex>@/^1pc/[rr]^-\gamma \ar@{^(->}[dr]_-j \ar@{^(->}[r]_-{\tj}
 & \inb{(\RB[N] V)} \ar[r]_-{\tilde\gamma} \ar[d]^{\prb} &
S_NV \ar[d]^q \\
& \inb{V} \ar[r]^-\tau & N \,.
}
\]
Note that $\tilde\gamma\colon \RB[N] V\to S_NV$, $[x,r]\to {[x]}$, is an $\R_{\geq 0}$-fiber bundle. Hence one has (where we neglect for short the indices on $\epsilon$ and $\iota$)
\begin{align}
\label{eq:gammaepb}
\Eepb{\tilde\gamma}H 
&\simeq \epsilon(\epb{\tilde\gamma}\field_{S_NV})\ctens\Eopb{\tilde\gamma}H \\
\notag
&\simeq \epsilon(\field_{\tj(\bdot V)})\ctens\Eopb{\tilde\gamma}H[1], \\
\label{eq:gammarb}
\reim{\tilde\gamma} \field_{\RB[N] V} &\simeq 0.
\end{align}
Back to \eqref{eq:tjgH}, one has
\begin{align*}
\Eoim\tau \Eeeim j \Eopb\gamma H 
&\simeq \Eoim\tau \Eeeim{{\prb}} \Eeeim \tj \Eopb\gamma H \\
&\underset{(1)}\simeq \Eoim\tau \Eoim{{\prb}} \Eeeim \tj \Eopb\gamma H \\
&\simeq \Eoim q \Eoim{\tilde\gamma} \Eeeim \tj \Eopb\tj \Eopb{\tilde\gamma} H \\
&\simeq \Eoim q \Eoim{\tilde\gamma}\bl \epsilon(\field_{\tj(\bdot V)})\ctens\Eopb{\tilde\gamma}H \br\\
&\underset{(2)}\simeq \Eoim q \Eoim{\tilde\gamma} \Eepb{\tilde\gamma}H[-1] \\
&\underset{(3)}\simeq \Eoim q \cihom(\epsilon(\reim{\tilde\gamma}\field_{\RB[N] V}),H)[-1] \underset{(4)}\simeq 0,
\end{align*}
where $(1)$ is due to the fact that $p_\rb$ is proper, $(2)$ follows from \eqref{eq:gammaepb},  $(3)$ follows from Sublemma~\ref{sub:pp}
since $\tilde\gamma$ is semiproper, and $(4)$ follows from \eqref{eq:gammarb}.

\smallskip\noindent(iii) has a proof similar to (ii).

\smallskip\noindent(iv) 
Let us show that the distinguished triangle
\[
\Eeeim\tau\Eeeim j\Eopb j K \to
\Eeeim\tau K \to
\Eeeim\tau\Eeeim o\Eopb o K \to[+1]
\]
is isomorphic to the distinguished triangle in the statement.

\smallskip\noindent(iv-a) 
One has
$\Eeeim\tau\Eeeim j\Eopb j K
\simeq\Eeeim{\bdot\tau}\Eopb j K 
\simeq\Eeeim{\bdot\tau}\Eopb \gamma K^\sph$,
where the last isomorphism follows from (i).

\smallskip\noindent(iv-b) 
By (iii), one has
$\Eeeim\tau K \simeq \Eepb o K $.

\smallskip\noindent(iv-c) 
One has $\Eeeim\tau\Eeeim o\Eopb o K \simeq \Eopb o K$, since $\tau\circ o\simeq\id_N$.
\end{proof}

\begin{lemma}\label{lem:sphdual}
For $K\in\BECcon{\inb V}$, one has
\[
\Edual(K^\sph) \simeq \bl\Edual K\br^\sph[-1].
\]
\end{lemma}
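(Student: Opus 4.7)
The plan is to unwind the definition of $(\,\cdot\,)^\sph$ and push the duality through each of the two functors appearing in it, then appeal to Lemma~\ref{lem:Mprinc}(ii), which is exactly what will produce the shift by $[-1]$.

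First I would rewrite
\[
\Edual(K^\sph) = \Edual\bl\Eoim\gamma\Eopb j K\br.
\]
Commuting $\Edual$ past $\Eoim\gamma$ gives $\Eeeim\gamma\Edual$, and commuting $\Edual$ past $\Eopb j$ gives $\Eepb j\Edual$. Since $j\colon\inb{\bdot V}\to\inb V$ is an open embedding, we have $\Eepb j\simeq\Eopb j$, so
\[
\Edual(K^\sph)\simeq\Eeeim\gamma\,\Eopb j\,\Edual K.
\]

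Next I would check that $\Edual K$ is again $\bGmp$-conic, so that $\Eopb j\,\Edual K$ is conic on $\inb{\bdot V}$. Conicity means $\Eopb p K\simeq\Eopb\mu K$ for the projection and action $p,\mu\colon\inb V\times\bGmp\to\inb V$; applying $\Edual$ and using $\Edual\Eopb f\simeq\Eepb f\Edual$ turns this into $\Eepb p\Edual K\simeq\Eepb\mu\Edual K$. Because $p$ and $\mu$ are both smooth of the same relative dimension, their relative dualizing complexes agree (up to the canonical identification), so this yields $\Eopb p\Edual K\simeq\Eopb\mu\Edual K$, confirming that $\Edual K$ is conic.

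Finally, since $\gamma\colon\inb{\bdot V}\to S_NV$ is a principal $\bGmp$-bundle and $\Eopb j\,\Edual K$ is a conic object on $\inb{\bdot V}$, Lemma~\ref{lem:Mprinc}(ii) applies and yields
\[
\Eeeim\gamma\bl\Eopb j\,\Edual K\br\simeq\Eoim\gamma\bl\Eopb j\,\Edual K\br[-1]=(\Edual K)^\sph[-1].
\]
Combining the two displayed isomorphisms gives the desired $\Edual(K^\sph)\simeq(\Edual K)^\sph[-1]$.

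The main obstacle I expect is the verification that $\Edual K$ remains conic; this is conceptually clear from the smoothness of the action map but requires keeping track of the identification of relative dualizing complexes for $p$ and $\mu$ under the shear automorphism $(x,c)\mapsto(cx,c)$ of $\inb V\times\bGmp$. Everything else is a direct application of the six-functor duality formalism for enhanced ind-sheaves together with Lemma~\ref{lem:Mprinc}(ii).
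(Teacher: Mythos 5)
Your argument is correct and uses the same two ingredients as the paper's proof --- the compatibility of $\Edual$ with $\Eoim\gamma$, $\Eeeim\gamma$, $\Eopb j$, $\Eepb j$ (together with $\Eopb j\simeq\Eepb j$ for the open embedding $j$), and Lemma~\ref{lem:Mprinc}(ii) --- but applies them in the opposite order. The paper first uses Lemma~\ref{lem:Mprinc}(ii) on $\Eopb j K$, where conicity is immediate from the hypothesis on $K$, to rewrite $\Eoim\gamma$ as $\Eeeim\gamma[1]$, and only then dualizes; the whole proof is three lines. You dualize first, landing on $\Eeeim\gamma\Eopb j\Edual K$, and then need Lemma~\ref{lem:Mprinc}(ii) applied to $\Eopb j\Edual K$, which forces you to verify that $\Edual K$ is again $\bGmp$-conic. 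Your verification of that fact is sound (duality does preserve conicity, by comparing $\Eepb p$ and $\Eepb\mu$ as you indicate, since both are smooth of relative dimension one with trivializable relative dualizing complexes matched by the shear), but it is exactly the step the paper's ordering avoids having to address. Both routes are valid; the paper's is marginally more economical because it never needs to know that $\Edual$ preserves conicity.
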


\begin{proof}
One has 
\begin{align*}
\Edual(\Eoim\gamma\Eopb j K) 
&\underset{(*)}\simeq \Edual(\Eeeim\gamma\Eopb j K[1]) \\
&\simeq \Eoim\gamma\Eepb j \Edual(K)[-1] \\
&\simeq \Eoim\gamma\Eopb j \Edual(K)[-1],
\end{align*}
where $(*)$ follows from Lemma~\ref{lem:Mprinc} (ii).
\end{proof}

\subsection{Enhanced specialization}\label{sse:spec}

Let $M$ be a real analytic manifold, and $N\subset M$ a closed submanifold.
We will introduce here an enhancement of Sato's specialization functor. We refer to \cite[Chapter 4]{KS90} for the classical construction.

\medskip
Note that the action of $\Gmp$ on \eqref{eq:NDMdiagram} naturally extends to an action of $\bGmp$ on its bordered compactification.
Consider the morphisms
\[
\xymatrix@R=3ex{
\inb{(T_NM)} \ar@{^(->}[r]^-{i_\nd} & \inb{(\ND M)} & \inb\Omega \ar@{_(->}[l]_-{j_\nd} \ar[r]^-{\pnd|_\Omega} & M\,,
}
\]
In the following, when there is no risk of confusion, we will write for short $i=i_\nd$, $j=j_\nd$ and $p_\Omega=\pnd|_\Omega$.

\begin{definition}\label{def:Enu}
For $K\in\BEC M$, we set
\begin{align*}
\Enu_N(K)
&\defeq \Eopb i\Eoim {{j}}\Eopb {p_\Omega} K  \quad \in\BECcon{\inb{(T_NM)}}, \\
\Enu^\sph_N(K)
&\defeq \bl\Enu_N(K)\br^\sph  \quad \in\BEC{S_NM}.
\end{align*}
The functor $\Enu_N$ is called enhanced specialization along $N$. 
\end{definition}

With a proof similar to that of Lemma~\ref{lem:nurbbis} (or of \cite[Lemma 4.2.1]{KS90}), one has

\begin{lemma}\label{le:nuother}

For $K\in\BEC M$, one has
\[
\Enu_N(K)
\simeq \Eepb i\Eeeim j\Eepb{p_\Omega} K.
\]
\end{lemma}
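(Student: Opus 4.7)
\textit{Proof plan.} The plan is to mirror \cite[Lemma~4.2.1]{KS90}. First, via the identification $\snd\colon\Omega \isoto M\times\R_{>0}$, the map $p_\Omega$ becomes the first projection, so $p_\Omega\colon\inb\Omega\to M$ is smooth of relative dimension one with canonically trivialised relative dualising complex $\epb{p_\Omega}\field_M \simeq \field_\Omega[1]$. This will yield $\Eepb{p_\Omega}K \simeq \Eopb{p_\Omega}K[1]$; setting $L \defeq \Eopb{p_\Omega}K$, the desired identity reduces to
\[
\Eopb i\Eoim j L \simeq \Eepb i\Eeeim j L[1]. \qquad(\star)
\]

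Next, put $\bar\Omega \defeq \opb\snd(\R_{\geq 0}) = \Omega \dunion T_NM$ and let $k\colon\inb{\bar\Omega}\hookrightarrow \inb{(\ND M)}$ denote the corresponding closed embedding. Factoring $j = k\circ j'$ and $i = k\circ i'$, where $j'\colon \inb\Omega\hookrightarrow\inb{\bar\Omega}$ is open and $i'\colon\inb{(T_NM)}\hookrightarrow\inb{\bar\Omega}$ is the complementary closed embedding, and using that the four functors $\Eopb k\Eoim k$, $\Eopb k\Eeeim k$, $\Eepb k\Eoim k$, $\Eepb k\Eeeim k$ are all canonically isomorphic to the identity (closed embedding), identity $(\star)$ becomes
\[
\Eopb{i'}\Eoim{j'} L \simeq \Eepb{i'}\Eeeim{j'} L[1]. \qquad(\star\star)
\]

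Finally, the map $\snd|_{\bar\Omega}\colon \inb{\bar\Omega}\to\inb{(\R_{\geq 0})}$ is a smooth submersion realising $(j', i')$ as the pull-back of the one-dimensional pair $(\R_{>0}\hookrightarrow\R_{\geq 0},\ \{0\}\hookrightarrow\R_{\geq 0})$; moreover $L$, being pulled back from $M$, is constant along the fibres of $\snd|_\Omega$. Base change will then reduce $(\star\star)$ to a direct computation at the boundary point $0\in\R_{\geq 0}$, in which the shift $[1]$ comes from the codimension-one orientation datum. The main obstacle will be justifying this base change for a smooth submersion of manifolds with boundary in the bordered/enhanced framework; a conceptual alternative is to exploit the $\bGmp$-equivariance of $L$ (the action $c\cdot(v,y,s) = (cv,y,c^{-1}s)$ on $\ND M$ makes $p_\Omega$ equivariant for the trivial action on $M$) and to apply Lemma~\ref{lem:Mprinc} to the principal $\bGmp$-bundle $\inb{(\bdot T_NM)}\to S_NM$, together with an independent verification on the zero section $N$.
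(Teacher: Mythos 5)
Your Step 1 (trivialising $\epb{p_\Omega}$ to get $\Eepb{p_\Omega}K \simeq \Eopb{p_\Omega}K[1]$, thus reducing to $(\star)$) is correct and is the same observation the paper uses implicitly. Your Step 2 (factoring through the closed embedding $k\colon\inb{\bar\Omega}\hookrightarrow\inb{(\ND M)}$) is also correct, and in fact necessary here, since $\Omega$ and $T_NM$ do \emph{not} cover $\ND M$; so $(\star)$ really does have to be rephrased as $(\star\star)$ on $\inb{\bar\Omega}$, where $j'$ and $i'$ are genuinely complementary open and closed embeddings. This subtlety is real and you handle it properly.

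The gap is in Step 3. You propose either a base-change argument along $\snd|_{\bar\Omega}$ (and concede yourself that justifying this base change is ``the main obstacle''), or a $\bGmp$-equivariance argument via Lemma~\ref{lem:Mprinc}. Neither is carried out, and neither is obviously adequate: the object $L = \Eopb{p_\Omega}K$ is pulled back from $M$, not from $\R_{\geq 0}$, so the ``universal computation at $0\in\R_{\geq 0}$'' does not directly propagate by proper or smooth base change; and the $\bGmp$-action on $T_NM$ has fixed points (the zero section), so Lemma~\ref{lem:Mprinc} does not apply globally without additional work. The point you are missing is that $(\star\star)$ requires no base change and no equivariance: it is purely formal. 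Apply the recollement distinguished triangle
\[
\Eoim{i'}\Eepb{i'}L' \to L' \to \Eoim{j'}\Eopb{j'}L' \to[+1]
\]
to $L' = \Eeeim{j'}L$, then apply $\Eopb{i'}$ and use $\Eopb{i'}\Eeeim{j'}\simeq 0$. The middle term dies, and the boundary map gives $\Eopb{i'}\Eoim{j'}L \simeq \Eepb{i'}\Eeeim{j'}L[1]$, which is exactly $(\star\star)$. This is precisely the paper's mechanism, stated for the blow-up version in Lemma~\ref{lem:nurbbis}, to which the statement you are proving explicitly defers (``with a proof similar to that of Lemma~\ref{lem:nurbbis}''). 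So your preparatory reductions are sound, but you abandon the argument at the point where the standard formal step would finish it, in favour of two sketched alternatives that would require nontrivial extra justification and are not obviously correct as stated.
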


Note that there is an isomorphism
\begin{equation}\label{eq:enu}
e\circ\nu_N \simeq \Enu_N\circ e,
\end{equation}
and similarly for $e$ replaced by $\epsilon$, $\epsilon^+$ or $\epsilon^-$.

Consider the morphisms
\[
\xymatrix@R=2ex@C=3em{
M &{N\;} \ar@{_(->}[l]_{i_N} \ar@{^(->}@<.5ex>[r]^-o & \inb{(T_NM)} \ar@<.5ex>[l]^(.6)\tau & \inb{(\bdot T_NM)} \ar@{_(->}[l]_u \ar@<.7ex>@/^3.5ex/[ll]^-{\bdot\tau} \ar[r]^\gamma & S_NM,
}
\]
where $o$ is the zero-section. 

\begin{lemma}\label{lem:nu}
For $K\in\BEC M$, one has the isomorphisms
\begin{itemize} 
\item[(i)]
$\Eoim \tau \Enu_N(K) \simeq \Eopb o \Enu_N(K) \simeq \Eopb i_N K$,
\item[(ii)]
$\Eeeim \tau \Enu_N(K) \simeq \Eepb o \Enu_N(K) \simeq \Eepb i_N K$,
\end{itemize}
and distinguished triangles
\begin{itemize} 
\item[(iii)]
$\Eeeim{\bdot\tau}\Eopb u\Enu_N K \to \Eepb{i_N} K \to \Eopb{i_N} K \to[+1] $,
\item[(iv)]
$\Eeeim u\Eopb \gamma\Enu^\sph_N K \to \Enu_N K \to \Eoim o \Eopb{i_N} K \to[+1]$,
\item[(v)]
$\Eoim o \Eepb{i_N} K \to \Enu_N K \to \Eoim u\Eepb \gamma\Enu^\sph_N K \to[+1]$.
\end{itemize}
\end{lemma}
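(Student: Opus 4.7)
The key observation is that $\Enu_N(K)\in\BECcon{\inb{(T_NM)}}$ (by construction) lives on the bordered compactification of the vector bundle $\tau\colon T_NM\to N$, which places us in the setting of Lemma~\ref{lem:Vconic}. The first isomorphisms in (i) and (ii), namely $\Eoim\tau\Enu_N K\simeq\Eopb o\Enu_N K$ and $\Eeeim\tau\Enu_N K\simeq\Eepb o\Enu_N K$, are then immediate from Lemma~\ref{lem:Vconic}(ii) and (iii). The rest of the argument identifies $\Eopb o\Enu_N K$ with $\Eopb{i_N}K$, the dual statement with $\Eepb{i_N}K$, and then assembles everything via standard open-closed distinguished triangles.

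To obtain the second isomorphisms $\Eopb o\Enu_N K\simeq\Eopb{i_N}K$ and $\Eepb o\Enu_N K\simeq\Eepb{i_N}K$, I compute instead $\Eoim\tau\Enu_N K$ (resp.\ $\Eeeim\tau\Enu_N K$) by base change along the left cartesian square of \eqref{eq:Mndinfty}
\[
\xymatrix@R=3ex{
\inb{(T_NM)} \ar[r]^-i \ar[d]_\tau & \inb{(\ND M)} \ar[d]^{(\pnd,\snd)} \\
N\times\st 0 \ar@{^(->}[r]^-{\iota_0} \ar@{}[ur]|-\square & M\times\bR,
}
\]
where $(\pnd,\snd)$ is semiproper because it extends to the proper map $\PB[Y]X\to M\times\RP$. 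Using the right cartesian square of \eqref{eq:Mndinfty} together with $\Eoim{(\pnd,\snd)}\Eoim j=\Eoim{(\pnd,\snd)\circ j}$, the object $\Eoim{(\pnd,\snd)}\Eoim j\Eopb{p_\Omega}K$ is identified with $\Eoim\alpha\Eopb{\pi_1}K$, where $\alpha\colon M\times\inb{(\R_{>0})}\hookrightarrow M\times\bR$ is the open embedding and $\pi_1$ is the projection. Factoring $\iota_0=\beta\circ i_N$ with $\beta\colon M\to M\times\bR$, $x\mapsto(x,0)$, the second iso of (i) reduces to the sublemma $\Eopb\beta\Eoim\alpha\Eopb{\pi_1}K\simeq K$, which is the enhanced counterpart of the standard stalk-at-the-boundary computation for the open embedding $M\times\R_{>0}\hookrightarrow M\times\R$. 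The second iso of (ii) follows by the dual argument: use Lemma~\ref{le:nuother} to rewrite $\Enu_N K=\Eepb i\Eeeim j\Eepb{p_\Omega}K$, apply base change for $\Eeeim$, and invoke the analogous stalk computation for $\Eeeim\alpha\Eepb{\pi_1}K$.

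The distinguished triangles are then formal. For (iii), apply Lemma~\ref{lem:Vconic}(iv) to the conic object $\Enu_N(K)$; since $(\Enu_N K)^\sph=\Enu_N^\sph K$ by definition and $\Eopb\gamma\Enu_N^\sph K\simeq\Eopb u\Enu_N K$ by Lemma~\ref{lem:Vconic}(i), the first term of that triangle coincides with $\Eeeim{\bdot\tau}\Eopb u\Enu_N K$, while the remaining two terms are identified via (i) and (ii). For (iv) and (v), apply to $F=\Enu_N K$ the standard open-closed distinguished triangles
\[
\Eeeim u\Eopb u F\to F\to\Eoim o\Eopb o F\to[+1],\qquad
\Eoim o\Eepb o F\to F\to\Eoim u\Eopb u F\to[+1]
\]
attached to the decomposition $\inb{(T_NM)}=o(N)\dunion u(\inb{(\bdot T_NM)})$, and substitute $\Eopb o\Enu_N K\simeq\Eopb{i_N}K$, $\Eepb o\Enu_N K\simeq\Eepb{i_N}K$ from (i), (ii), and $\Eopb u\Enu_N K\simeq\Eopb\gamma\Enu_N^\sph K$ from Lemma~\ref{lem:Vconic}(i).

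The hard part will be the second isomorphism of (i) (and dually of (ii)): one must verify that base change for $\Eoim$ of bordered spaces applies in the semiproper, non-proper setting of $(\pnd,\snd)$, and establish the sublemma $\Eopb\beta\Eoim\alpha\Eopb{\pi_1}K\simeq K$ in the enhanced ind-sheaf framework. The underlying sheaf-theoretic stalk computation is elementary, but extra care is needed to keep the enhancement $\bR$-direction separate from the deformation direction $\bR_\snd$ appearing in the target $M\times\bR$ of $(\pnd,\snd)$.
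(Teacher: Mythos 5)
The overall architecture of your proof matches the paper for parts (iii), (iv), (v), and for the first isomorphisms in (i) and (ii), which indeed reduce to Lemma~\ref{lem:Vconic} applied to the conic object $\Enu_N(K)$ on $\inb{(T_NM)}$. The gap is in your treatment of the second isomorphisms in (i) and (ii), and it is not merely a technical issue to be ``verified'' as you suggest in your last paragraph.

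You invoke base change for $\Eoim$ along the left cartesian square of \eqref{eq:Mndinfty}, i.e.\ you want $\Eoim\tau\Eopb{i}\simeq\Eopb{\iota_0}\Eoim{(\pnd,\snd)}$ where $\iota_0\colon N\times\st0\into M\times\bR$. For enhanced ind-sheaves (as for ordinary sheaves), base change for $\Eoim f$ holds when $f$ is \emph{proper} as a morphism of bordered spaces, or when the bottom map is a topological submersion; neither applies here. The morphism $(\pnd,\snd)\colon\inb{(\ND M)}\to M\times\bR$ is only semiproper: its underlying map $\ND M\to M\times\R$ is not proper, since the fiber over a point of $N\times\st0$ is a full vector space $T_xM/T_xN$. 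The bottom map $\iota_0$ is a closed embedding, not a submersion. So the base change you want is not available, and semiproperness alone does not rescue it. (This is essentially the same phenomenon as for $R\pi_*$ along a non-proper projection: the stalk of the pushforward at a boundary point sees a ``limit of sections over shrinking neighborhoods,'' not the restriction to the special fiber.)

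The paper's proof of (i-b) addresses exactly this obstruction, and you should compare your plan with it. The key idea there is to replace the non-proper $(\pnd,\snd)|_{\overline\Omega}$ by the \emph{twisted} map $r\colon\overline\Omega\to M\times\R$, $r(v,y,s)=(sv,y,s-|v|)$. The extra term $-|v|$ makes $r$ proper (this is verified explicitly via the factorization through $f(v,y,s)=(y,|v|,s)$ and $h(y,u,s)=(y,su,s-u)$), so proper base change applies to $r$ where it fails for $(\pnd,\snd)$. The price is a more intricate diagram and the appearance of the half-space $Z=N\times\R_{\leq 0}\subset M\times\R$ in place of your $N\times\st0$; the remaining reduction (using conicity to slide between pullbacks and pushforwards, and the contractibility of the fibers of $q_U$) then lands on the same kind of stalk computation you had in mind. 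Without the twist, your plan stalls at the very first step, so this is a genuine missing idea rather than a detail to be checked.

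A secondary remark: even granting the base change, your reduction to the sublemma $\Eopb\beta\Eoim\alpha\Eopb{\pi_1}K\simeq K$ is slightly off. Your map $(\pnd,\snd)\circ j$ identifies $\inb\Omega$ with $M\times\inb{(\R_{>0})}$, but $\Eoim{(\pnd,\snd)}$ applied to $\Eoim j\Eopb{p_\Omega}K$ is not $\Eoim\alpha\Eopb{\pi_1}K$ unless one already knows that nothing is gained by pushing forward first to $\inb{(\ND M)}$ and then to $M\times\bR$ versus pushing forward directly; this is again a base-change-type claim and would need justification, whereas the paper's route through $r$ and $\overline\Omega$ avoids it by staying on the closure $\overline\Omega$ from the start. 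So be careful: the one reduction you call ``elementary'' hides the same difficulty.
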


\begin{proof}
(i-a)  The isomorphism $\Eoim \tau \Enu_N(K) \simeq \Eopb o \Enu_N(K)$ follows from Lemma \ref{lem:Vconic} (ii).

\medskip\noindent
(i-b) 
Let us show that the composition
\begin{equation}
\label{eq:onui}
\Eoim\tau \Enu_N(K)\to \Eoim\tau \Enu_N(\Eoim{{i_N}}\Eopb i_N K)
\isoto\Eopb i_N K 
\end{equation} 
is an isomorphism.
Since the problem is local on $N$, we may work in coordinates as in Remark~\ref{re:blowcoord}~(iii).

Recall that $\overline\Omega=\opb\snd(\R_{\geq 0})=\Omega\dunion T_NM$ is the closure of $\Omega$ in $\ND M$, and
consider the map $r\colon\overline\Omega\to M\times\R$ given by $r(v,y,s) = (sv,y,s-|v|)$. 
Then $r$ is a proper map since, in the commutative diagram
\[
\xymatrix@C=3ex@R=1em{
\overline\Omega \ar[r]^r \ar[d]^f & M\times\R \ar[d]^g \\
N\times\R_{\geq 0}\times\R \ar[r]^h & N\times\R_{\geq 0}\times\R,
}
\]
$f$ and $h$ are proper. Here, $f(v,y,s)=(y,|v|,s)$, $g(x,y,s)=(y,|x|,s)$, and $h(y,u,s)=(y,su,s-u)$.

Setting $Z=N\times\R_{\leq 0}\subset M\times\R$ and $U=( M\times\R)\setminus Z$, 
the continuous map  $r$ induces a homeomorphism $\Omega\isoto U$. Consider the commutative diagram of bordered spaces semiproper over $M$, whose two top squares are cartesian,
\[
\xymatrix@R=4ex@C=7ex{
\inb\Omega \ar@{^(->}[r]^j \ar[d]^{r_j}_\wr & \inb{\overline\Omega} \ar@/^2.5pc/[dd]^(.75){\overline p} |!{[dr];[d]}\hole \ar[d]^r & \inb{(T_NM)} \ar@{_(->}[l]_-i \ar[d]_{r_i} \ar@/^2pc/[dd]^-\tau  \\
\inb U  \ar[dr]_{q_{U}} \ar@{}[ur]|-\square \ar@{^(->}[r]^-{\tilde\jmath} & M\times\inb\R \ar@{}[ur]|-\square \ar[d]_-q & \inb Z \ar@{_(->}[l]_-{\tilde\imath} \ar[d]_{q_{Z}} \\
& M & N \ar@{_(->}[l]^{i_N}.
}
\]
Here, $q$, $q_{U}$ and $q_{Z}$ denote the first projections, and $\overline p=\pnd|_{\overline\Omega}$.
One has
\begin{align*}
\Eoim\tau \Enu_N(K)
&\simeq \Eoim\tau\Eopb i\Eoim j\Eopb j \Eopb {\overline p} K \\
&\simeq \Eoim{{q_{Z}}}\Eoim{{r_i}}\Eopb i\Eoim j\Eopb j \Eopb r\Eopb {q} K \\
&\simeq \Eoim{{q_{Z}}}\Eoim{{r_i}}\Eopb i\Eoim j \Eopb{r_j}\Eopb{\tilde\jmath}\Eopb {q} K \\
&\underset{(*)}\simeq \Eoim{{q_{Z}}}\Eopb{\tilde\imath}\Eoim r\Eoim j \Eopb{r_j}\Eopb{\tilde\jmath}\Eopb {q} K \\
&\simeq \Eoim{{q_{Z}}}\Eopb{\tilde\imath}\Eoim{\tilde\jmath}\Eoim {{r_j}} \Eopb{r_j}\Eopb{\tilde\jmath}\Eopb {q} K \\
&\simeq \Eoim{{q_{Z}}}\Eopb{\tilde\imath}\Eoim{\tilde\jmath}\Eopb{\tilde\jmath}\Eopb {q} K ,
\end{align*}
where $(*)$ follows from the properness of $r$.
Consider the commutative diagram
\[
\xymatrix@R=3ex{
\inb U \ar[d]_{q_{U}} \ar@{^(->}[r]^-{\tilde\jmath} & M\times\inb\R \ar[dl]^{q} & \inb Z \ar@{_(->}[l]_-{\tilde\imath} \ar[r]^{q_{Z}} & N \\
M \ar@{=}[r] & M \ar@{}[ur]|-\square  \ar@{_(->}[u]_-{\tilde\imath_0} & N \ar@{_(->}[l]^{i_N} \ar@{_(->}[u]_{\tilde o} \ar@{=}[ur],
}
\]
where $\tilde o(y)=(0,y,0)$ and $\tilde\imath_0(x,y) = (x,y,0)$.
Note that $\bGmp$ acts on the second component of $M\times\inb\R$, and hence also on $\inb Z$. Then
\begin{align*}
\Eoim\tau \Enu_N(K)
&\underset{(*)}\simeq \Eopb{\tilde o}\Eopb{\tilde\imath}\Eoim{\tilde\jmath}\Eopb{\tilde\jmath}\Eopb {q} K \\
&\simeq \Eopb{i_N}\Eopb{\tilde\imath_0}\Eoim{\tilde\jmath}\Eopb{\tilde\jmath}\Eopb {q} K \\
&\underset{(**)}\simeq \Eopb{i_N}\Eoim{{q}}\Eoim{\tilde\jmath}\Eopb{\tilde\jmath}\Eopb {q} K \\
&\simeq \Eopb{i_N}\Eoim{{q_{U}}}\Eopb {q_{U}} K,
\end{align*}
where $(*)$ holds since $\Eopb{\tilde\imath}\Eoim{\tilde\jmath}\Eopb{\tilde\jmath}\Eopb {q} K$ is $\bGmp$-conic, and  $(**)$ holds since $\Eoim{\tilde\jmath}\Eopb{\tilde\jmath}\Eopb {q} K$ is $\bGmp$-conic. Then, one has
\begin{align*}
\Eoim\tau \Enu_N(K)
&\simeq \Eopb{i_N}\Eoim{{q_{U}}}\cihom(\epsilon(\field_U),\Eepb {(q_{U})} K[-1]) \\
&\simeq \Eopb{i_N}\cihom(\Eeeim{{q_{U}}}\epsilon(\field_U),K[-1]) \\
&\underset{(*)}\simeq \Eopb{i_N}\cihom(\epsilon(\reim{{q_{U}}}\field_U),K[-1]) \\
&\underset{(**)}\simeq \Eopb{i_N}\cihom(\epsilon(\field_M[-1]),K[-1]) \\
&\simeq \Eopb{i_N}K,
\end{align*}
where $(*)$ holds since $q_{U}$ is semiproper, and $(**)$ is due to the fact that the fibers of $q_{U}$ are homeomorphic to $\R$.
This gives \eqref{eq:onui}.

\medskip\noindent
(ii) has a similar proof to (i).

\medskip\noindent
(iii) follows from (i) and (ii), using Lemma~\ref{lem:Vconic} (iv).

\medskip\noindent
(iv) Consider the distinguished triangle
\[
\Eeeim u \Eopb u \Enu_N(K) \to \Enu_N(K)\to \Eeeim o \Eopb o \Enu_N(K) \to [+1].
\]
Then the statement follows from (i) and  Lemma~\ref{lem:Vconic} (i).

\medskip\noindent
(v) has a proof similar to that of (iv), using the distinguished triangle
\[
\Eoim o \Eepb o \Enu_N(K) \to \Enu_N(K) \to \Eoim u \Eepb u \Enu_N(K) \to [+1].
\]
\end{proof}

Recall the definition of the normal cone in \eqref{eq:CN}.
Here is an analogue of \cite[Exercise IV.2]{KS90}

\begin{lemma}\label{le:EnuMU}
Let $S\subset M$ be a closed subset. Then $\Enu_N$ induces a functor $\Enu_{N|M\setminus S}$ (see \eqref{eq:EnuU} below) entering the quasi-commutative diagram
\[
\xymatrix@C=4em{
\BEC M \ar[d] \ar[r]^-{\Enu_N}\ar[d]^{\Eopb j_S} & \BEC{\inb{(T_NM)}} \ar[d]^{\Eopb j_C} \\
\BEC {\inb{(M\setminus S)}}\ar[r]^-{\Enu_{N|M\setminus S}} & \BEC {\inb{(T_NM\setminus C_NS)}} ,
}
\] 
where $j_S\colon \inb{(M\setminus S)} \to M$ and $j_C\colon \inb{(T_NM\setminus C_NS)} \to \inb{(T_NM)}$ are the open embeddings.
\end{lemma}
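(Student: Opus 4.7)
The plan is to imitate the definition of $\Enu_N$ after restricting the bordered normal deformation to the open complement of the ``lift'' of $S$, and then deduce the quasi-commutativity by open-embedding base change.

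Set $U\seteq M\setminus S$, $V\seteq T_NM\setminus C_NS$, and $F\seteq p_\Omega^{-1}(S)\cup C_NS$. By \eqref{eq:CN} one has $C_NS = T_NM\cap\overline{p_\Omega^{-1}(S)}^{\overline\Omega}$, and $\overline\Omega=\opb\snd(\R_{\geq 0})$ is closed in $\ND M$; hence $F$ is closed in $\ND M$, and $\inb{(\ND M)}\setminus F$ is a well-defined open sub-bordered space of $\inb{(\ND M)}$. Writing $\Omega'\seteq p_\Omega^{-1}(U)=\Omega\setminus p_\Omega^{-1}(S)$ and $\inb{\Omega'}\seteq(\Omega',\PB[Y]X)$, I would display the commutative diagram of bordered spaces whose two interior squares are cartesian,
\[
\xymatrix@R=3ex{
\inb V \ar@{^(->}[r]^-{i'} \ar[d]_{j_C} & \inb{(\ND M)}\setminus F \ar[d]^k & \inb{\Omega'} \ar@{_(->}[l]_-{j'} \ar[d]^{k_\Omega} \ar[r]^-{p'_\Omega} & \inb U \ar[d]^{j_S} \\
\inb{(T_NM)} \ar@{^(->}[r]^-i & \inb{(\ND M)} & \inb\Omega \ar@{_(->}[l]_-j \ar[r]^-{p_\Omega} & M,
}
\]
where $k,k_\Omega$ are open embeddings, $i'$ is the closed embedding arising from $T_NM\cap F = C_NS$, and $p'_\Omega$ is the restriction of $p_\Omega$.

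The one non-formal compatibility I need to verify is that $p'_\Omega\colon\inb{\Omega'}\to\inb U$ is semiproper. This reduces to semiproperness of $p_\Omega$ over $M$: the closure of $\Gamma_{p'_\Omega}$ in $\PB[Y]X\times M$ is contained in the closure of $\Gamma_{p_\Omega}$, which is the graph of the proper map $\pnd\colon\PB[Y]X\to M$, and hence projects properly to $M$, so also to $U$ through $\inb U=(U,M)$. With this in hand I define
\begin{equation}\label{eq:EnuU}
\Enu_{N|M\setminus S}(G)\seteq \Eopb{i'}\Eoim{j'}\Eopb{p'_\Omega}G,\qquad G\in\BEC{\inb U}.
\end{equation}

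With this definition, the quasi-commutativity is routine. Open-embedding base change for the middle cartesian square yields $\Eopb k\Eoim j\simeq \Eoim{j'}\Eopb{k_\Omega}$, and combined with the identities $i\circ j_C=k\circ i'$ and $p_\Omega\circ k_\Omega=j_S\circ p'_\Omega$ read off the diagram, one computes
\begin{align*}
\Eopb{j_C}\Enu_N(K)
&\simeq \Eopb{i'}\Eopb k\Eoim j\Eopb{p_\Omega}K
\simeq \Eopb{i'}\Eoim{j'}\Eopb{k_\Omega}\Eopb{p_\Omega}K \\
&\simeq \Eopb{i'}\Eoim{j'}\Eopb{p'_\Omega}\Eopb{j_S}K
\simeq \Enu_{N|M\setminus S}\bl\Eopb{j_S}K\br.
\end{align*}
The main obstacle is the bookkeeping on bordered structures --- confirming that $F$ is closed in $\ND M$ (so that $\inb{(\ND M)}\setminus F$ is a legitimate bordered space), that the two squares are genuinely cartesian in $\bTop$ (not only on underlying topological spaces), and that $p'_\Omega$ is semiproper --- after which both the definition \eqref{eq:EnuU} and the base-change computation are essentially formal.
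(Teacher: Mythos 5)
Your proof is correct and follows essentially the same route as the paper: restrict the bordered normal deformation diagram to the open complement of the closure of $p_\Omega^{-1}(S)$, define $\Enu_{N|M\setminus S}$ by the formula obtained from the restricted diagram, and deduce quasi-commutativity by base change for the cartesian squares. The only cosmetic difference is that you write the closed subset as $F = p_\Omega^{-1}(S)\cup C_NS$ while the paper uses $\overline{p_\Omega^{-1}(S)}$; as you point out (and as follows directly from $\overline\Omega = \Omega\sqcup T_NM$ being closed in $\ND M$ together with the definition \eqref{eq:CN}), these coincide, so the middle bordered space is the same. Your explicit checks of closedness, cartesianness and semiproperness of $p'_\Omega$ are not spelled out in the paper but are correct and harmless; note in particular that your semiproperness argument simply inherits it from the semiproperness of $p_\Omega$ over $M$ recorded in \eqref{eq:Mndinfty}, which is the standard observation.
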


\begin{proof}
Consider the commutative diagram with cartesian squares
\[
\xymatrix@R=3ex@C=3ex{
M & \inb\Omega \ar[l]_-{p_\Omega} \ar[r]^-{i} & \inb{(\ND[N]M)} & \inb{(T_NM)} \ar[l]_-{ j} \\
\inb{(M\setminus S)} \ar@{^(->}[u]^{j_S} \ar@{}[ur]|-\square & \inb{(\Omega\setminus \opb p_\Omega S)} \ar[l]_-{p_\Omega'} \ar[r]^-{i'} \ar@{^(->}[u]^{j_1} \ar@{}[ur]|-\square & \inb{(\ND[N]M\setminus \overline{\opb p_\Omega S})} \ar@{^(->}[u]^{j_2} \ar@{}[ur]|-\square & \inb{(T_NM\setminus C_NS)} \ar[l]_-{ j'} \ar@{^(->}[u] \ar@{^(->}[u]^{j_C}  ,
}
\]
where all the vertical arrows are open embeddings. Then 
\begin{align*}
\Eopb j_C \Enu_N (K) 
&= \Eopb j_C \Eoim{ j} \Eoim{{i}} \Eopb{p_\Omega} K \\
&\simeq \Eoim{ j'} \Eoim{i'} \Eopb{p_\Omega'} \Eopb{j_S} K .
\end{align*}

For $K'\in\BEC {\inb{(M\setminus S)}}$, set
\begin{equation}\label{eq:EnuU}
\Enu_{N|M\setminus S}(K') \defeq \Eoim{ j'} \Eoim{{i'}} \Eopb{p_\Omega'} K'.
\end{equation}
Then the statement is clear.
\end{proof}

Here is an analogue of \cite[Exercise IV.5]{KS90}

\begin{lemma}\label{lem:nu_Nid}
Let $\tau\colon V\to N$ be a vector bundle, and denote by $o\colon N\to V$ the embedding of the zero-section. 
For $K\in\BECcon {\inb V}$, one has
\[
\Enu_N(K) \simeq K,
\]
where we use the identifications $N\simeq o(N)\subset V$ and $T_NV\simeq V$.
\end{lemma}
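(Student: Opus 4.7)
Plan. The strategy is to exploit two facts: (a) for a vector bundle $\tau\colon V\to N$, scalar multiplication provides a canonical identification $\ND V\simeq V\times\R$ under which, in local trivializations as in Remark~\ref{re:blowcoord}, $\pnd(v,y,s)=(sv,y)$, $\snd(v,y,s)=s$, and the $\bGmp$-action reads $c\cdot(v,y,s)=(cv,y,c^{-1}s)$; (b) the $\bGmp$-conicity of $K$ allows one to replace the ``twisted'' map $p_\Omega$ by a genuine first projection. Concretely, let $q\colon\inb{(\ND V)}\to\inb V$ be the bordered morphism corresponding to $(v,y,s)\mapsto(v,y)$; under the identification $T_NV\simeq V$, one has $q\circ i_\nd=\id$.

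The key reduction goes as follows. Consider the morphism $\phi\colon\inb\Omega\to\inb V\times\bGmp$, $(v,y,s)\mapsto((v,y),s)$, and denote by $\mu,p_V\colon\inb V\times\bGmp\to\inb V$ the action and the first projection. A direct check gives $\mu\circ\phi=p_\Omega$ and $p_V\circ\phi=q\circ j_\nd$. The $\bGmp$-conicity of $K$ provides $\Eopb\mu K\simeq\Eopb{p_V}K$, and pulling back along $\phi$ yields
\[
\Eopb{p_\Omega}K\simeq\Eopb{j_\nd}\Eopb qK,
\]
so that $\Enu_N K\simeq\Eopb{i_\nd}\Eoim{j_\nd}\Eopb{j_\nd}\Eopb qK$.

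It then remains to establish the K\"unneth-type identity $\Eopb{i_\nd}\Eoim{j_\nd}\Eopb{j_\nd}\Eopb qK\simeq K$. Morally, writing a representative of $K$ on $\inb V\times\bR_t$, the object $\Eopb qK$ corresponds to the exterior product of that representative with $\field_{\R_s}$ on $V\times\R_s\times\bR_t$; the composite $\Eoim{j_\nd}\Eopb{j_\nd}$ replaces $\field_{\R_s}$ by $\roim{j'}\field_{\R_{>0}}\simeq\field_{\R_{\ge0}}$ (where $j'\colon\R_{>0}\hookrightarrow\R$); and restricting to $s=0$ via $\Eopb{i_\nd}$ reduces this factor to $\field$, recovering $K$.

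The main obstacle lies in formalizing this last step. Since the bordered compactification $\inb{(\ND V)}=(\ND V,\PB[Y]X)$ is not literally the product $\inb V\times\inb\R$, one cannot invoke an honest K\"unneth formula; instead the computation must be carried out via base change along $q$, $j_\nd$ and $i_\nd$ within the enhanced ind-sheaf formalism on bordered spaces. A secondary technical point, already in the conicity reduction, is to verify that $\phi$ is a morphism of bordered spaces, i.e.\ that the closure of its graph in $\PB[Y]X\times(\SV\times\overline\R)$ projects properly onto $\PB[Y]X$; this can be checked directly from the explicit description of $\PB[Y]X$ recalled in Remark~\ref{re:blowcoord}.
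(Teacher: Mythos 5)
Your approach is essentially the same strategy the paper uses: identify $\ND V$ with $V\times\R$ via scalar multiplication so that $p_\nd(x,s)=sx$, $s_\nd(x,s)=s$, and then trade $p_\Omega$ for a genuine projection by exploiting the $\bGmp$-conicity of $K$. Your decision to factor through $\inb V\times\bGmp$ rather than $V\times\bGmp$ is a useful sharpening: the multiplication $\mu(x,c)=cx$ defines a morphism of bordered spaces only when the source is $\inb V\times\bGmp$ (with $\inb V=(V,\SV)$), not when it is $V\times\bGmp$ with $V$ unbordered, and the conicity of $K\in\BECcon{\inb V}$ is an action on $\inb V$. Two bits of bookkeeping you should make explicit: $\mu\circ\phi$ lands in $\inb V$ whereas $p_\Omega$ lands in $V$, so the relation to record is $\mu\circ\phi=i_{\inb V}\circ p_\Omega$ as morphisms $\inb\Omega\to\inb V$ (after which $\Eopb{p_\Omega}\Eopb{i_{\inb V}}K\simeq\Eopb{j_\nd}\Eopb qK$ follows); and one must indeed verify, as you flag, that $\phi$ and the first projection $q\colon\inb{(\ND V)}\to\inb V$ are morphisms of bordered spaces.

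The genuine gap is the final identity $\Eopb{i_\nd}\Eoim{j_\nd}\Eopb{j_\nd}\Eopb qK\simeq K$, which you rightly call the main obstacle but leave at the level of a heuristic. This is where the remaining content of the lemma sits (the paper compresses the very same step into ``the statement easily follows''). One concrete way to close it: since $\Eoim{j_\nd}$ factors through $\inb{\overline\Omega}$, pass to the open--closed pair $j'\colon\inb\Omega\to\inb{\overline\Omega}$, $i'\colon\inb{(T_NV)}\to\inb{\overline\Omega}$, together with $\pi\colon\inb{\overline\Omega}\to\inb V$, $(x,s)\mapsto x$; the distinguished triangle
\[
\Eoim{i'}\Eepb{i'}L\to L\to\Eoim{j'}\Eopb{j'}L\to[+1],
\]
applied to $L=\Eopb\pi K$ and followed by $\Eopb{i'}$, reduces the claim to the vanishing $\Eepb{i'}\Eopb\pi K\simeq 0$, a fiberwise local-cohomology computation along the boundary of the $\R_{\geq0}$-bundle $\pi$ that can be carried out by base change over $\inb V$. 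As written, without an argument of this kind, your proof is not complete.
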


\begin{proof}
One has $\inb{(\ND V)} \simeq V\times \bR$, with $\snd(x,s)=s$ and $\pnd(x,s)=sx$.
Hence $\inb\Omega\simeq V\times\bGmp$, and $p_\Omega=\mu$, where $\mu$ is the $\bGmp$-action. Then,
\[
\Eopb{p_\Omega} K = \Eopb\mu K \simeq K \cetens e_\bR(\field_{\R_{>0}}),
\]
where the last isomorphism is due to the fact that $K$ is $\bGmp$-conic.
Recalling Definition~\ref{def:Enu}, the statement easily follows.
\end{proof}

Let $f\colon M_1\to M_2$ be a morphism of real analytic manifolds, let $N_i\subset M_i$ ($i=1,2$) be closed submanifolds, and assume $f(N_1)\subset N_2$.
Consider the associated morphism, given by the composition
\[
T_{N_1}f\colon \inb{(T_{N_1}M_1)} \to[f']  N_1\times_{N_2}\inb{(T_{N_2}M_2)} \to  \inb{(T_{N_2}M_2)}.
\]
The enhanced specialization functor satisfies the analogous functorial properties as those in Propositions 4.2.4, 4.2.5 and 4.2.6 of \cite{KS90}. The proofs in loc.\ cit.\ immediately extend to the enhanced framework. 

For example, if $f$ and $f|_{N_1}\colon N_1\to N_2$ are smooth, one has
\begin{equation}\label{eq:Enuopb}
\Eopb{(T_{N_1}f)}\circ\Enu_{N_2} \simeq \Enu_{N_1}\circ\Eopb f . 
\end{equation}

\subsection{Blow-up transform}\label{sse:blow}

Let $M$ be a real analytic manifold, and $N\subset M$ a closed submanifold.
With notations as in \eqref{eq:RBMdiagram}, consider the real oriented blowup $\RB M$ and the commutative diagram of bordered spaces
\begin{equation}
\label{eq:blowMN}
\xymatrix@R=3ex{
S_NM \ar@{^(->}[r]^-{i_\rb} \ar[d]_\sigma & \RB M \ar[d]_{\prb}
& \inb{(M\setminus N)} \ar@{_(->}[dl]^-{j_N} \ar@{_(->}[l]_-{j_\rb} \\
N \ar@{^(->}[r]^-{i_N} \ar@{}[ur]|-\square & M\,.
}
\end{equation}
Note that $\inb{(M\setminus N)} \simeq \inb{(\RB M\setminus S_NM)}$.
In the following, when there is no risk of confusion, we will write for short $i=i_\rb$, $j=j_\rb$ and $p=\prb$.

\begin{definition}
For $K\in\BEC M$, consider the object
\[
\Enu^\rb_N(K)
\defeq \Eopb i\Eoim j\Eopb j_N K \quad\in \BEC{S_NM}.
\]
We denote by
\[
\nu^\rb_N\colon
\BDC(\field_M) \to \BDC(\field_{S_NM})
\]
the analogous functor for sheaves.
\end{definition}

Note that, by definition, $\Enu^\rb_N$ factors through a functor, that we denote by the same name,
\[
\Enu^\rb_N\colon\BEC{\inb{(M\setminus N)}} \to \BEC {S_NM}.
\]
Note also that one has
\begin{equation}\label{eq:enurd}
e\circ\nu_N^\rb \simeq \Enu_N^\rb\circ e,
\end{equation}
and similarly for $e$ replaced by $e\circ\iota$, $\epsilon$, $\epsilon^+$ or $\epsilon^-$.

\begin{lemma}\label{lem:nurbbis}
For $K\in\BEC M$, one has
\[
\Enu^\rb_N(K)
\simeq \Eepb i\Eeeim j\Eepb j_N K[1].
\]
\end{lemma}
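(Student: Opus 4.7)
The plan is to identify $L \defeq \Eopb{j_N}K$ and then reduce the statement to the identity
\[
\Eopb i \Eoim j L \simeq \Eepb i \Eeeim j L[1],
\]
which I would prove by a general open-closed argument on $\RB M$. The first observation is that $j_N\colon \inb{(M\setminus N)}\to M$, realized via $\inb{(M\setminus N)} = (M\setminus N, M)$, is an open embedding of bordered spaces, so one has $\Eopb{j_N}\simeq\Eepb{j_N}$; hence $\Eopb{j_N}K = \Eepb{j_N}K = L$, and the target isomorphism indeed becomes the one displayed above. (Unlike the analogous step in Lemma~\ref{le:nuother}, no shift is absorbed here, which is what ultimately produces the shift by $[1]$ in the statement.)

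Next, I would exploit the open-closed decomposition of $\RB M$ into the open subset $\inb{(M\setminus N)}$ (embedded via $j$) and the closed subset $S_NM$ (embedded via $i$). This yields two standard recollement distinguished triangles in $\BEC{\RB M}$:
\begin{align*}
\Eeeim j \Eopb j F &\to F \to \Eoim i \Eopb i F \to[+1], \\
\Eoim i \Eepb i F &\to F \to \Eoim j \Eopb j F \to[+1].
\end{align*}
Applied to $F = \Eoim j L$ (using $\Eopb j \Eoim j L \simeq L$) and to $F = \Eeeim j L$ (using $\Eopb j \Eeeim j L \simeq L$) respectively, and rotating the second, they both give distinguished triangles starting with the canonical morphism $\alpha\colon \Eeeim j L \to \Eoim j L$:
\begin{align*}
\Eeeim j L \to[\alpha] \Eoim j L &\to \Eoim i \Eopb i \Eoim j L \to[+1], \\
\Eeeim j L \to[\alpha] \Eoim j L &\to \Eoim i \Eepb i \Eeeim j L [1] \to[+1].
\end{align*}
By uniqueness of cones, the two cones are (non-canonically) isomorphic, giving
\[
\Eoim i \Eopb i \Eoim j L \simeq \Eoim i \Eepb i \Eeeim j L[1].
\]
Applying $\Eopb i$ and using $\Eopb i \Eoim i \simeq \id$ (valid since $i$ is a closed embedding) yields the desired isomorphism.

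The routine inputs are the usual six-operation identities for open and closed embeddings and the recollement triangles; in the bordered enhanced ind-sheaf setting these are available from \cite{DK16}. The main subtlety to check is that the two triangles really do share the \emph{same} first morphism $\alpha$, which amounts to the standard compatibility between the adjunction counit $\Eeeim j \Eopb j F \to F$ (for $F = \Eoim j L$) and the adjunction unit $F \to \Eoim j \Eopb j F$ (for $F = \Eeeim j L$): both compositions recover the canonical natural transformation $\Eeeim j \to \Eoim j$ evaluated at $L$, so the triangles may be compared.
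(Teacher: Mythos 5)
Your proof is correct, but it takes a genuinely different (and slightly longer) route from the paper's. The paper applies $\Eopb i$ to a \emph{single} recollement triangle
\[
\Eoim i\Eepb i L \to L \to \Eoim j\Eepb j L \to[+1]
\]
with $L=\Eeeim j\Eopb {j_N} K$, and then observes that the middle term dies because $\Eopb i\Eeeim j\simeq 0$ ($i$ and $j$ embed disjoint complementary pieces of $\RB M$). That immediately identifies $\Eepb i\Eeeim j\Eepb{j_N}K$ with the shift of $\Eopb i\Eoim j\Eopb{j_N}K=\Enu_N^\rb(K)$, via the connecting morphism of the triangle, which in particular makes the isomorphism canonical. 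You instead compare \emph{two} recollement triangles over the same canonical morphism $\Eeeim j L\to\Eoim j L$ and invoke uniqueness of cones; this works, and your identification of the two first morphisms (counit-induced vs.\ unit-induced) is indeed a standard compatibility, but the argument is more indirect and only yields a non-canonical isomorphism. Nothing is wrong, but the paper's use of the vanishing $\Eopb i\Eeeim j\simeq 0$ is the more economical observation and what you should internalize here; it also sidesteps the ``same first morphism'' subtlety you had to address. One small remark: the parenthetical claim that the shift $[1]$ in the statement comes from not absorbing a shift in $\Eopb{j_N}\simeq\Eepb{j_N}$ is misleading; the $[1]$ simply arises from rotating the triangle (or, in the paper's proof, from the connecting morphism), and has nothing to do with $j_N$.
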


\begin{proof}
For $L\in\BEC{\RB M}$, there is a distinguished triangle
\[
\Eoim i\Eepb i L \to
L \to
\Eoim j\Eepb j L \to[+1].
\]
When $L=\Eeeim j\Eopb j_N K$, the above distinguished triangle reads
\[
\Eoim i\Eepb i\Eeeim j\Eepb j_N K \to \Eeeim j\Eopb j_N K \to \Eoim j\Eopb j_N K \to[+1].
\]
The statement follows by applying $\Eopb i$, and noticing that $\Eopb i\Eeeim j\simeq0$.
\end{proof}

\begin{lemma}\label{lem:nuS}
For $K\in\BEC M$, one has
\[
\Enu^\sph_N(K) \simeq \Enu^\rb_N(K).
\]
\end{lemma}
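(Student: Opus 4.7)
The plan is to mediate the comparison through the subspace $\widetilde\Omega\subset\overline\Omega$ of diagram~\eqref{eq:dNDMdiagram}, on which $\gamma\colon\bdot T_NM\to S_NM$ extends to a principal $\Gmp$-bundle $\widetilde\Omega\to\RB M$. I would first equip $\widetilde\Omega$ with a bordered structure $\inb{\widetilde\Omega}$ making $\tilde\gamma\colon\inb{\widetilde\Omega}\to\RB M$ a principal $\bGmp$-bundle and supporting a natural morphism $\tilde{\tilde\jmath}\colon\inb{\widetilde\Omega}\to\inb{(\ND M)}$; denote by $\ti\colon\inb{(\bdot T_NM)}\hookrightarrow\inb{\widetilde\Omega}$ and $\tj_0\colon\inb{((M\setminus N)\times\bGmp)}\hookrightarrow\inb{\widetilde\Omega}$ the closed and open embeddings coming from the decomposition $\widetilde\Omega = \bl(M\setminus N)\times\R_{>0}\br \sqcup \bdot T_NM$. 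The key geometric inputs are the Cartesian squares
\[
\xymatrix@R=3ex@C=2.5em{
\inb{(\bdot T_NM)} \ar@{^(->}[r]^-{\ti} \ar[d]_-\gamma & \inb{\widetilde\Omega} \ar[d]^-{\tilde\gamma} \\
S_NM \ar@{^(->}[r]^-{i_\rb} & \RB M
}
\qquad
\xymatrix@R=3ex@C=2.5em{
\inb{((M\setminus N)\times\bGmp)} \ar@{^(->}[r]^-{\tj_0} \ar[d]_-{j'_0} & \inb{\widetilde\Omega} \ar[d]^-{\tilde{\tilde\jmath}} \\
\inb\Omega \ar@{^(->}[r]^-{j_\nd} & \inb{(\ND M)}
}
\]
together with the equalities $i_\nd\circ u = \tilde{\tilde\jmath}\circ\ti$, $p_\Omega\circ j'_0 = j_N\circ q$, and $\tilde\gamma\circ\tj_0 = j_\rb\circ q$, where $q\colon \inb{((M\setminus N)\times\bGmp)}\to\inb{(M\setminus N)}$ is the first projection.

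The computation is then a short chain of manipulations. Starting from $\Enu^\sph_N(K) = \Eoim\gamma\Eopb u\Enu_N(K)$, the factorization $i_\nd\circ u = \tilde{\tilde\jmath}\circ\ti$ gives
\[
\Eopb u\Enu_N(K)\simeq \Eopb{\ti}\Eopb{\tilde{\tilde\jmath}}\Eoim{j_\nd}\Eopb{p_\Omega}K.
\]
Open-embedding base change along the right-hand square (applicable since $j_\nd$ is open) replaces $\Eopb{\tilde{\tilde\jmath}}\Eoim{j_\nd}$ by $\Eoim{\tj_0}\Eopb{j'_0}$; combined with $p_\Omega\circ j'_0 = j_N\circ q$, this yields $\Eopb u\Enu_N(K)\simeq \Eopb{\ti}\Eoim{\tj_0}\Eopb q\Eopb{j_N}K$. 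Applying $\Eoim\gamma$ and invoking proper base change along the left-hand square (valid since $\tilde\gamma$ is semiproper) gives $\Eoim\gamma\Eopb{\ti}\simeq\Eopb{i_\rb}\Eoim{\tilde\gamma}$, and combined with $\tilde\gamma\circ\tj_0 = j_\rb\circ q$ this produces
\[
\Enu^\sph_N(K)\simeq\Eopb{i_\rb}\Eoim{j_\rb}\Eoim q\Eopb q\Eopb{j_N}K.
\]
Finally, $q$ is itself a principal $\bGmp$-bundle and $\Eopb q\Eopb{j_N}K$ is trivially $\bGmp$-conic, so Lemma~\ref{lem:Mprinc}(i) collapses $\Eoim q\Eopb q$ to the identity, yielding $\Enu^\sph_N(K)\simeq\Enu^\rb_N(K)$.

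The main obstacle will be the setup step: fixing a bordered structure on $\widetilde\Omega$ that simultaneously makes $\tilde\gamma$ a principal $\bGmp$-bundle and renders both displayed squares Cartesian in $\bTop$, and then justifying the base change isomorphism $\Eopb{\tilde{\tilde\jmath}}\Eoim{j_\nd}\simeq\Eoim{\tj_0}\Eopb{j'_0}$ in the enhanced ind-sheaf setting, given that $\tilde{\tilde\jmath}$ is only a locally closed embedding (open inside the closed subspace $\inb{\overline\Omega}$) rather than an outright open one.
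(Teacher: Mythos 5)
Your strategy coincides with the paper's: both mediate through the intermediate space $\widetilde\Omega\subset\overline\Omega$ of diagram~\eqref{eq:dNDMdiagram}, exploiting that $\tilde\gamma\colon\inb{\widetilde\Omega}\to\RB M$ is a principal $\bGmp$-bundle extending $\gamma$ and that its open part is related to $\inb\Omega$ by open embeddings. The bookkeeping differs. The paper reduces to showing the \emph{equivalent} identity $\Eopb u\Enu_N(K)\simeq\Eopb\gamma\Enu^\rb_N(K)$ and then performs a pure diagram chase in which every base change used has a smooth or open ``vertical'' map, so only standard (smooth, open, closed-embedding) base change is needed. You instead compute $\Eoim\gamma\Eopb u\Enu_N(K)$ directly, which forces you to pass $\Eoim\gamma$ across the square, requiring the isomorphism $\Eoim\gamma\Eopb{\ti}\simeq\Eopb{i_\rb}\Eoim{\tilde\gamma}$ and afterwards an application of Lemma~\ref{lem:Mprinc}(i) to collapse $\Eoim q\Eopb q$. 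Both of those extra steps are avoidable via the paper's ``apply $\Eopb\gamma$ to the target'' trick.

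Two justifications in your write-up need repair. First, $\Eoim\gamma\Eopb{\ti}\simeq\Eopb{i_\rb}\Eoim{\tilde\gamma}$ is not proper base change: $\tilde\gamma$ is semiproper but \emph{not} proper (its fibres are $\R_{>0}$), and semiproperness in the bordered framework gives the unconditional base change for $\Eeeim{}$, not for $\Eoim{}$. The isomorphism you want does hold, but only on $\bGmp$-conic objects and via a different route: apply the unconditional base change $\Eopb{i_\rb}\Eeeim{\tilde\gamma}\simeq\Eeeim\gamma\Eopb{\ti}$ and then convert both sides using Lemma~\ref{lem:Mprinc}(ii), which gives $\Eeeim{\tilde\gamma}\simeq\Eoim{\tilde\gamma}[-1]$ and $\Eeeim\gamma\simeq\Eoim\gamma[-1]$ on conic objects. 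Since the input $\Eoim{\tj_0}\Eopb q\Eopb{j_N}K$ is conic, this does close the gap, but you should say so. Second, the base change across the locally closed $\tilde{\tilde\jmath}$ that you flag at the end is indeed the subtlety: it works, but only after factoring $\tilde{\tilde\jmath}$ as the open embedding $\inb{\widetilde\Omega}\hookrightarrow\inb{\overline\Omega}$ followed by the closed embedding $\inb{\overline\Omega}\hookrightarrow\inb{(\ND M)}$, using $\Eopb\iota\Eoim{j_\nd}\simeq\Eoim{\jmath}$ for the closed piece (since $j_\nd=\iota\circ\jmath$ with $\iota$ closed, so $\Eopb\iota\Eoim\iota\simeq\id$) and ordinary open base change for the other. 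With these two points made precise, your argument is correct.
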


\begin{proof}
Considering the morphisms
\[
\xymatrix@R=1ex{
\inb{(T_NM)} & \inb{(\bdot T_NM)} \ar@{_(->}[l]_-u \ar[r]^-\gamma & S_NM,
}
\]
it is equivalent to prove
\[
\Eopb u\Enu_N(K) \simeq \Eopb\gamma\Enu^\rb_N(K).
\]
Consider the commutative diagram, extending \eqref{eq:dNDMdiagram}, whose squares are Cartesian with smooth vertical arrows
\[
\xymatrix@R=2ex{
& \inb{(\ND M)} \\
\inb{(T_NM)} \ar@{^(->}[ur]^{i_\nd} \ar@{^(->}[r] & \inb{\overline\Omega} \ar@{^(->}[u] & \inb\Omega \ar@{_(->}[ul]_ {j_\nd} \ar@{_(->}[l] \ar[dr]^-{p_\Omega} \\
\inb{(\bdot T_NM)}\ar@{}[ur]|-\square \ar@{^(->}[r] \ar@{^(->}[u]_u \ar[d]^\gamma & \inb{\widetilde\Omega} \ar@{}[ur]|-\square \ar@{^(->}[u] \ar[d] & \inb{(\widetilde\Omega\cap\Omega)} \ar@{_(->}[l]  \ar@{^(->}[u] \ar[d] & M \,. \\
S_NM \ar@{}[ur]|-\square \ar@{^(->}[r]^{i_\rb} & \RB M \ar@{}[ur]|-\square & \inb{(M\setminus N)}  \ar@{_(->}[l]_-{j_\rb} \ar[ur]_-{j_N}
}
\]
We have to prove
\begin{equation}\label{eq:sphrbtemp}
\Eopb u\Eopb{i_\nd}\Eoim {{j_\nd}}\Eopb {p_\Omega} K 
\simeq \Eopb\gamma \Eopb{i_\rb}\Eoim{{j_\rb}}\Eopb{j_N}  K.
\end{equation}
This is obtained by chasing the above diagram.
\end{proof}

\section{Fourier-Sato transform and microlocalization}\label{se:mu}

We recall here the natural enhancement of the Fourier-Sato transform from \cite[\S3]{Tam18} (see also \cite{DAg14} and \cite{KS16L}), referring to \cite[\S3.7]{KS90} for the classical case. We then define the natural enhancement of Sato's microlocalization, for which we refer to \cite[\S4.3]{KS90}. 

\subsection{Kernels}
Let $\bdx$ and $\bdy$ be bordered spaces.
A kernel from $\bdx$ to $\bdy$ is a triple $(p,q,C)$, where $p$ and $q$ are morphisms of bordered spaces
\[
\xymatrix@R=1ex{
& \bds \ar[dl]_p \ar[dr]^q \\
\bdx && \bdy,
}
\]
and $C\in\BEC{\bds}$. To such a kernel one associates the functors
\[
\Phi_{(p,q,C)},\Psi_{(p,q,C)}\colon\BEC\bdx\to\BEC\bdy,
\]
defined by
\begin{align*}
\Phi_{(p,q,C)}(K) &\defeq \Eeeim q(C\ctens\Eopb p K), \\
\Psi_{(p,q,C)}(K) &\defeq \Eoim q\cihom(C,\Eepb p K).
\end{align*}

Given a commutative diagram
\[
\xymatrix@R=1ex@C=3em{
& \bds \ar[dddl]_p \ar[dddr]^q \ar[dd]^r \\ & \\
& \bds' \ar[dl]^{p'} \ar[dr]_{q'} \\
\bdx && \bdy,
}
\]
one has
\begin{equation}\label{eq:Cpush}
\Phi_{(p,q,C)} \simeq \Phi_{(p',q',\reeim r C)}, \quad
\Psi_{(p,q,C)} \simeq \Psi_{(p',q',\reeim r C)}.
\end{equation}

If there is no fear of confusion, we will write for short
\[
C = (p,q,C), \quad C^\XY = (q_1,q_2,\reim{(p,q)} C), \quad C^r = (q,p,C),
\]
where $q_1$ and $q_2$ are the projections from $\bdx\times\bdy$ to $\bdx$ and $\bdy$, respectively,and $(p,q)\cl \bds\to \bdx\times\bdy$ is the morphism induced by $p$ and $q$.
Then, \eqref{eq:Cpush} implies
\[
\Phi_C \simeq \Phi_{C^\XY}, \quad
\Psi_C \simeq \Psi_{C^\XY},
\]
and the kernel $C^r$ from $\bdy$ to $\bdx$ gives functors
\[
\Phi_{C^r},\Psi_{C^r}\colon\BEC\bdy\to\BEC\bdx.
\]
Note that $\Phi_C$ is left adjoint to $\Psi_{C^r}$ (and $\Psi_C$ is right adjoint to $\Phi_{C^r}$). Note also that for $K\in\BEC\bX$ one has
\begin{equation}\label{eq:Phidual}
\Edual_\bY\Phi_C(K) \simeq \Psi_C(\Edual_\bX K).
\end{equation}

Note that, if $C\in\BECpm{\bds}$, then $\Phi_C$ and $\Psi_C$ take value in $\BECpm\bdy$.
In this case, we set
\[
\Phi^\pm_C\defeq\Phi_C\vert_{\BECpm\bdx}, \quad
\Psi^\pm_C\defeq\Psi_C\vert_{\BECpm\bdx},
\]
so that we have functors
\[
\Phi^\pm_C, \Psi^\pm_C\colon\BECpm\bdy\to\BECpm\bdx.
\]

For $\ast\in\st{\emptyset,+,-}$,
consider the kernel $\mathbf{1}^\ast_\bdx  \defeq (q_1,q_2,\epsilon^\ast(\field_{\Delta_X}))$, where $X=\oo\bdx$ and $\Delta_X\subset X\times X$ is the diagonal.
Note that one has $\mathbf{1}^\ast_\bdx \simeq (\id_\bX,\id_\bX,\epsilon^\ast(\field_X))^\XY$, so that in particular
\[
\Psi^\ast_{\mathbf{1}^{\ast}_\bdx}\simeq \id_{\BECast\bdx} \simeq
\Phi^\ast_{\mathbf{1}^{\ast}_\bdx}.
\]

Given another bordered space $\bdz$, and a kernel $D=(\bdt\to[r]\bdy,\bdt\to[s]\bdz,D)$ from $\bdy$ to $\bdz$, consider the diagram with cartesian square
\[
\xymatrix@R=1ex@C=3em{
&& \bds\times_\bdy\bdt \ar[dl]_{r'} \ar[dr]^{q'} \\
& \bds  \ar[dl]_p \ar[dr]^q \ar@{}[rr]|-\square && \bdt  \ar[dl]_r \ar[dr]^s \\
\bdx && \bdy && \bdz.
}
\]
Setting
\[
C \ccomp D \defeq \Eopb{r'}C \ctens \Eopb{q'}D \in \BEC{\bds\times_\bdy\bdt},
\]
one gets a kernel $C \ccomp D = (p\circ r', s\circ q', C \ccomp D)$ from $\bdx$ to $\bdz$ such that
\[
\Phi_D \circ \Phi_C \simeq \Phi_{C \ccomp D}, \quad
\Psi_D \circ \Psi_C \simeq \Psi_{C \ccomp D}.
\]

Let $\ast\in\st{\emptyset,+,-}$, $C\in\BECast\bds$ and $D\in\BECast\bdt$.
Assume that $\bdz=\bdx$ and that
\[
(C\ccomp D)^\XY \simeq \mathbf{1}^\ast_\bdx, \quad
(D\ccomp C)^\XY \simeq \mathbf{1}^\ast_\bdy.
\]
Then, the functors $\Phi_C^\ast$ and $\Phi_D^\ast$ (resp.\ $\Psi^\ast_C$ and $\Psi_D^\ast$) are equivalences of categories quasi-inverse to each other. Moreover, by uniqueness of the adjoint, one has
\begin{equation}\label{eq:PhiPsi}
\Phi^\ast_C \simeq \Psi^\ast_{D^r}, \quad \Psi^\ast_C \simeq \Phi^\ast_{D^r}.
\end{equation}

\begin{lemma}\label{lem:cartcomp}
Consider a commutative diagram of bordered spaces with cartesian squares
\[
\xymatrix@R=3ex{
\bdx' \ar[d]_f & \bds' \ar[d]_h \ar[l]_-{p'} \ar[r]^-{q'} & \bdy' \ar[d]^g \\
\bdx \ar@{}[ur]|-\square & \bds \ar[l]^-p \ar[r]_-q \ar@{}[ur]|-\square & \bdy.
}
\]
Let $C\in\BEC\bds$, and set 
\[
C'\defeq \Eopb h C \in \BEC{\bds'}.
\]
Consider $C=(p,q,C)$ and $C'=(p',q',C')$ as kernels.
Then, one has
\begin{align*}
\Phi_C\circ\Eeeim f &\simeq \Eeeim g\circ\Phi_{C'}, &
\Psi_C\circ\Eoim f  &\simeq \Eoim g\circ\Psi_{C'}, \\
\Phi_{C'}\circ\Eopb f  &\simeq \Eopb g\circ\Phi_C, &
\Psi_{C'}\circ\Eepb f  &\simeq \Eepb g\circ\Psi_C.
\end{align*}
\end{lemma}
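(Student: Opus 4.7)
The plan is to deduce each of the four isomorphisms from two basic inputs: the base change and projection formulas for the six operations on enhanced ind-sheaves (as developed for bordered spaces in \cite{DK16}), applied to the two cartesian squares of the diagram. No new geometric input is needed; the whole argument is a diagram chase.

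For the first isomorphism, I would start from
\[
\Phi_C(\Eeeim f K) = \Eeeim q\bl C \ctens \Eopb p \Eeeim f K\br,
\]
apply base change $\Eopb p \Eeeim f \simeq \Eeeim h \Eopb{p'}$ on the left square, then the projection formula $C \ctens \Eeeim h(-) \simeq \Eeeim h(\Eopb h C \ctens -)$ together with $\Eopb h C = C'$, and finally use $\Eeeim{(q\circ h)} = \Eeeim{(g\circ q')}$ to factor out $\Eeeim g$ and recognize $\Phi_{C'}(K)$ inside. For the third isomorphism, I would start from $\Phi_{C'}(\Eopb f K) = \Eeeim{q'}(C' \ctens \Eopb{p'}\Eopb f K)$, use functoriality of $\Eopb{(-)}$ to rewrite $C' \ctens \Eopb{p'}\Eopb f K = \Eopb h(C \ctens \Eopb p K)$, and then apply base change $\Eeeim{q'}\Eopb h \simeq \Eopb g \Eeeim q$ on the right square.

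The second and fourth isomorphisms can then be obtained either by the strictly parallel direct computation (with $\Eoim,\Eepb$ replacing $\Eeeim,\Eopb$ throughout, using the dual formulas $\Eepb p \Eoim f \simeq \Eoim h \Eepb{p'}$ and $\cihom(C,\Eoim h -)\simeq\Eoim h\cihom(\Eopb h C,-)$ and $\Eoim{q'}\Eepb h\simeq\Eepb g\Eoim q$), or more economically by taking adjoints in the first two isomorphisms, exploiting $\Phi_C \dashv \Psi_{C^r}$ together with the evident identity $(C')^r = \Eopb h(C^r)$ which makes the reversed kernel associated to the diagram transform correctly.

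The only obstacle here is essentially bookkeeping: making sure each base change is applied to the appropriate cartesian square (left for formulas involving $p,p'$; right for formulas involving $q,q'$) and that the factorizations $q\circ h = g\circ q'$ and $f\circ p' = p\circ h$ from commutativity are invoked at the right moment. No additional analysis or subtle compatibility beyond the standard six-functor formalism is involved.
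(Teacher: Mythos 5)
The paper states Lemma~\ref{lem:cartcomp} without proof, since it is a routine consequence of the six-functor formalism on bordered spaces. Your argument is exactly the standard proof the authors have in mind and it is correct: base change on the left square ($\Eopb p\Eeeim f\simeq\Eeeim h\Eopb{p'}$), projection formula across $h$, and the identity $q\circ h=g\circ q'$ give the first isomorphism; functoriality of $\Eopb{(-)}$ together with $f\circ p'=p\circ h$ and base change on the right square ($\Eeeim{q'}\Eopb h\simeq\Eopb g\Eeeim q$) give the third; the second and fourth then follow by the dual chain of identities (using $\Eepb p\Eoim f\simeq\Eoim h\Eepb{p'}$, the $\cihom$--$\Eoim$ adjunction across $h$, $\cihom(\Eopb hC,\Eepb h(-))\simeq\Eepb h\cihom(C,-)$, and $\Eoim{q'}\Eepb h\simeq\Eepb g\Eoim q$), or, as you note, by passing to adjoints of the first and third applied to the reversed kernel $C^r$ in the mirrored diagram and using $(C')^r=\Eopb h(C^r)$.
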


\subsection{Enhanced Fourier-Sato transform}

Let $\bbM$ be a bordered space, and $U\subset \bbM$ an open subset. For $\varphi\colon U\to\R$ a continuous function, consider the object of $\BECp\bbM$
\[
\ex_U^\varphi \seteq \quot_\bbM\field_{\st{t+\varphi(x)\geq 0}},
\]
where we write for short 
\[
\st{t+\varphi(x)\geq 0} = \st{(x,t)\in U\times\R\semicolon t+\varphi(x)\geq 0}.
\]

Let $\tau\colon V\to N$ be a real vector bundle and $\varpi\colon V^*\to N$ its dual bundle.
Denote by $\inb V$ and $\inb{V^*}$ their bordered compactifications, consider the projections
\[
\xymatrix{
\inb V & \inb V\times_N \inb{V^*} \ar[l]_-p \ar[r]^-q & \inb{V^*},
}
\]
and let $\langle \cdot,\cdot\rangle\colon V\times_N V^* \to \R$ denote the pairing.

\begin{notation}\label{not:Fou}
Let $K\in\BECp \Vi$.
\begin{itemize}
\item[(i)]
The Fourier-Sato transforms (see \cite[\S3.7]{KS90} for the case of sheaves) are defined by
\[
K^\wedge \defeq\Phi^+_\Fou(K), \quad K^\vee \defeq\Phi^+_\Foua(K),
\]
for $\Fou,\ \Foua\in\BECp{\inb V\times_N\inb{V^*}}$ given by
\[
\Fou \defeq \epsilon^+\bl\field_{\st{\langle x,y\rangle\leq0}}\br, \quad
\Foua \defeq \epsilon^+\bl\field_{\st{\langle x,y\rangle\geq0}}
\tens\opb p\omega_{V/N}\br.
\]
Note that the kernels $\Fou$ and $\Foua$ are $\bGmp$-bi-conic for the actions $c\cdot(x,y) = (cx,y)$ and $d\cdot (x,y) = (x,dy)$.
Hence, the Fourier-Sato transforms take values in $\BECcon\Wi\cap\BECp\Wi$.
\item[(ii)]
The enhanced Fourier-Sato transforms (see \cite[\S3.1.3]{Tam18} for the case of enhanced sheaves) are defined by
\[
\lap K \defeq\Phi^+_\Lap(K), \quad \lapa K \defeq\Phi^+_\Lapa(K),
\]
for $\Lap,\ \Lapa\in\BECp{\inb V\times_N\inb{V^*}}$ given by
\[
\Lap \defeq \ex_{V\times_N V^*}^{-\langle x,y\rangle}, \quad
 \Lapa \defeq  \ex_{V\times_N V^*}^{\langle x,y\rangle}\tens\opb\pi\opb p\omega_{V/N}.
\]
Note that the kernels $\Lap$ and $\Lapa$ are $\bGmp$-conic for the action $c\cdot(x,y) = (cx,c^{-1}y)$.
Hence the enhanced Fourier-Sato transforms sends
$\BECcon \Vi$ to $\BECcon \Wi\cap\BECp \Wi$.
\end{itemize}
\end{notation}

It is shown in \cite{Tam18} (see also \cite{KS16L}) that one has
\begin{equation}\label{eq:LapLapr}
\bl\Lap \ccomp \Lapa^r\br^\XY \simeq \mathbf{1}^+_{\Vi}, \quad
\bl\Lapa^r \ccomp \Lap\br^\XY \simeq \mathbf{1}^+_{\Wi}.
\end{equation}
It follows that $\lap(\cdot)$ and $\lapar(\cdot)$ are quasi-inverse to each other and that, by \eqref{eq:PhiPsi}, 
\begin{equation}\label{eq:FouPhiPsi}
\lap K
\simeq \Psi^+_\Lapa(K), \quad
\lapa K
\simeq \Psi^+_\Lap(K).
\end{equation}

Note that one has
\begin{align}
\label{eq:eFou}
e\bl K^\wedge\br &\simeq \bl e(K)\br^\wedge, &
e\bl K^\vee\br &\simeq \bl e(K)\br^\vee, 
\end{align}
and the same for $e$ replaced by $\epsilon^+$.

The following result was obtained in \cite{DAg14,KS16L} for conic sheaves, and we generalize it to enhanced ind-sheaves.

\begin{proposition}\label{pro:Foucon}
For $K\in\BECcon \Vi\cap\BECp \Vi$, one has
\[
\lap K \simeq   K^\wedge, \quad
\lapa K \simeq  K^\vee.
\]
\end{proposition}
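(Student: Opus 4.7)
The plan is to exhibit a natural comparison morphism $\alpha_K \colon \lap K \to K^\wedge$ arising from an explicit morphism of kernels $\Lap \to \Fou$, and then to show that $\alpha_K$ is an isomorphism when $K$ is conic by exploiting the conicity via a scaling argument. The analogue for $\lapa K \simeq K^\vee$ would then follow either by a parallel argument with the twisted kernels or by Verdier duality.

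First, I would observe that the closed subset $\{(x,y,t) : t \geq 0,\ \langle x,y\rangle \leq 0\}$ is contained in the closed subset $\{(x,y,t) : t \geq \langle x,y\rangle\}$ of $V \times_N V^* \times \R$. The associated restriction map between indicator sheaves, after applying $\quot$, produces a morphism $\Lap \to \Fou$ in $\BECp{\Vi \times_N \Wi}$, which in turn induces a natural transformation $\alpha \colon \lap \to (\dummy)^\wedge$ between functors $\BECp\Vi \to \BECp\Wi$. Letting $C$ denote the cone of $\Lap \to \Fou$, the claim reduces to showing $\Phi^+_C(K) \simeq 0$ for every $K \in \BECcon\Vi \cap \BECp\Vi$. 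A direct computation of the indicator sets shows that $C$ is represented, up to shift, by $\quot(\field_U)$ with $U = \{\langle x,y\rangle \leq t < 0\} \sqcup \{0 < \langle x,y\rangle \leq t\}$.

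For the vanishing, I would use the $\bGmp$-action $\mu \colon \Vi \times \bGmp \to \Vi$, $(x,c) \mapsto cx$. Conicity gives $\Eopb p K \simeq \Eopb \mu K$ on $\Vi \times \bGmp$, and a kernel-composition argument in the spirit of Lemma~\ref{lem:Mprinc} allows one to rewrite $\Phi^+_C(K)$ as a push-forward of $C \ctens \Eopb{(p \circ \mu)} K$ along the semiproper map $\Vi \times \bGmp \times_N \Wi \to \Wi$. Under this rescaling the shifted support of $C$ is parametrized by $\langle cx,y\rangle$: on the region $\{\langle x,y\rangle > 0\}$, letting $c \to +\infty$ pushes the support into $\{t \gg 0\}$, so its image in the quotient $\BEC\Wi = \BDC(\ifield_{\Wi\times\bR})/\opb{\pi_\Wi}\BDC(\ifield_\Wi)$ vanishes; on the region $\{\langle x,y\rangle < 0\}$, it is pushed into $\{t \ll 0\}$ and thus falls outside $\BECp\Wi$; and the generic locus $\{\langle x,y\rangle = 0\}$ contributes nothing since $\Lap$ and $\Fou$ already agree there.

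The companion statement $\lapa K \simeq K^\vee$ would then follow either by running the same argument with $\Lapa$ and $\Foua$ in place of $\Lap$ and $\Fou$ (the only change being a sign and a twist by $\opb p \omega_{V/N}$), or by applying Verdier duality together with \eqref{eq:Phidual}, \eqref{eq:FouPhiPsi}, \eqref{eq:LapLapr}, and the classical compatibility of $(\dummy)^\wedge$ with $(\dummy)^\vee$ under duality. The main obstacle I anticipate is making the scaling limit $c \to +\infty$ rigorous within the bordered-space and enhanced formalism: one has to carefully handle the interaction of $\Eeeim q$, the $\bGmp$-equivariant structure on $C \ctens \Eopb p K$, and the quotient defining $\BECp\Wi$. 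An alternative route worth exploring would be to reduce to the classical case of \cite{DAg14, KS16L} via the compatibility $e(F^\wedge) \simeq e(F)^\wedge$ of \eqref{eq:eFou}, provided one can write any conic $K \in \BECp\Vi$ as $e(F)$ for a conic ind-sheaf $F$ at least after a suitable filtered presentation.
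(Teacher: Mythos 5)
Your reduction is exactly the paper's: the inclusion of closed sets gives a morphism of kernels $\Lap\to\Fou$, and the cone is represented by $\quot\field_{\{t\geq\langle x,y\rangle>0\}}\dsum\quot\field_{\{0>t\geq\langle x,y\rangle\}}$, so the claim becomes a vanishing statement for the two associated kernel functors applied to a conic $K$. That much is solid and matches the paper.

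The gap is in the vanishing argument. Your ``scaling limit $c\to+\infty$'' heuristic does not translate into a correct statement about the quotient category. In particular, an object of $\BDC(\ifield_{\Wi\times\bR})$ whose support escapes to $\{t\gg0\}$ does \emph{not} die in $\BEC\Wi$; the quotient only kills objects pulled back via $\pi_\Wi$, and $\quot\field_{\{t\gg0\}}=\Efield_\Wi$ is a nonzero (indeed, central) object. Likewise, ``falling outside $\BECp\Wi$'' for the other piece is not a proof that the relevant $\Phi^+_C$ vanishes, since you would have to show that the part landing in $\BECm\Wi$ is entire, not merely that the support drifts to $\{t\ll0\}$. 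What the paper does instead, following the Kashiwara--Schapira Laplace-kernel computation, is to push the problem forward along $h\colon(x,y)\mapsto(y,\lambda)$ with $\lambda=\langle x,y\rangle$, so that conicity of $K$ (via $\langle cx,y\rangle=c\langle x,y\rangle$) makes $\Eopb j\Eeeim h\Eopb p K$ $\bGmp$-conic in $\lambda$; hence it is $\Eopb s H$ for some $H\in\BEC\Wi$. Then projection-formula manipulations reduce everything to $\reim{s_\R}\field_{\{t\geq\lambda\}}$, which vanishes since $s$ is semiproper and the fibers of $s_\R$ meet $\{t\geq\lambda\}$ in half-open intervals. This is where the actual zero comes from, and it is a compactly-supported-cohomology-of-an-interval fact, not a drift-to-infinity fact.

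If you want to repair your argument along your own lines, the cleanest path is precisely the one the paper takes: replace the ambient scaling over $\bGmp$ by the push-forward along $h$, which converts $\langle x,y\rangle$ into an honest coordinate and lets you invoke conicity as a genuine descent statement ($\Eopb sH$) rather than as a limiting heuristic. The remark that the locus $\{\langle x,y\rangle=0\}$ contributes nothing is unnecessary in this formulation, since that locus is already excluded from the cone. Your suggested alternative of reducing to the classical result via $e$ will not work in the generality stated: not every conic $K\in\BECp\Vi$ is of the form $e(F)$, so the compatibility \eqref{eq:eFou} only recovers the statement on the essential image of $e$, not on all of $\BECcon\Vi\cap\BECp\Vi$. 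For the second isomorphism, running the same argument for $\Lapa$ and $\Foua$ is fine, as is the duality route; the paper simply says the arguments are similar and treats only the first.
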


\begin{proof}
We will adapt the proof of \cite[Theorem 5.7]{KS16L}. Since the arguments are similar, we will only treat the first isomorphism.

One has
\[
\Lap = \quot\,\field_{\st{t-\langle x,y\rangle\geq0}} ,\quad 
\Fou = \quot\,\field_{\st{t\geq0\geq\langle x,y\rangle}}.
\]
The inclusion $\st{t-\langle x,y\rangle\geq0} \supset\st{t\geq0\geq\langle x,y\rangle}$ induces a distinguished triangle
\[
\quot\,\field_{\st{t\geq\langle x,y\rangle>0}}\dsum\quot\,\field_{\st{0>t\geq\langle x,y\rangle}}
\to \quot\,\field_{\st{t-\langle x,y\rangle\geq0}} \to \quot\,\field_{\st{t\geq0\geq\langle x,y\rangle}} \To[+1].
\]
We are thus left to prove
\[
\Phi_{\quot\,\field_{\st{t\geq\langle x,y\rangle>0}}}(K) \simeq 0
\simeq \Phi_{\quot\,\field_{\st{0>t\geq\langle x,y\rangle}}}(K).
\]
Since the arguments are similar, we will only treat the first isomorphism.

Consider the morphisms
\[
\xymatrix@R=3ex{
\Vi & \Vi \times \Wi \ar[l]_-p \ar[d]^-q \ar[r]^h & \Wi\times\R_\infty \ar[dl]_r & \Wi\times\inb{(\R_{>0})} \ar@{_(->}[l]_-j \ar[dll]^s \\
& \Wi,
}
\]
where $h(x,y)=(y,\langle x,y\rangle)$, $j$ is the embedding, and $p$, $q$, $r$, $s$ are the projections.
Then, denoting by $\la$ the coordinate of $\bR$,
\begin{align*}
\Phi_{\quot\,\field_{\st{t\geq\langle x,y\rangle>0}}}(K) 
&= \Eeeim q \bl \quot\,\field_{\st{t\geq\langle x,y\rangle>0}} \ctens \Eopb p K \br \\
&\simeq \Eeeim r \Eeeim h \bl  \Eopb h\quot\,\field_{\st{t\geq\lambda>0}} \ctens \Eopb p K \br \\
&\simeq \Eeeim r \bl \quot\,\field_{\st{t\geq\lambda>0}} \ctens \Eeeim h \Eopb p K \br \\
&\simeq \Eeeim r \bl \epsilon(\field_{\st{\lambda>0}}) \ctens 
\quot\field_{\st{t\geq\lambda}} \ctens \Eeeim h \Eopb p K  \\
&\simeq \Eeeim r \Eeeim j \Eopb j ( \quot\,\field_{\st{t\geq\lambda}} \ctens \Eeeim h \Eopb p K) \\
&\simeq \Eeeim s \bl \quot\,\field_{\st{t\geq\lambda}} \ctens \Eopb j \Eeeim h \Eopb p K \br.
\end{align*}
Since $\Eopb j \Eeeim h \Eopb p K$ is $\bGmp$-conic for the action $c\cdot(y,\lambda)=(y,c\lambda)$, we have $\Eopb j \Eeeim h \Eopb p K \simeq \Eopb s H$ for some $H\in\BEC\Wi$. Hence
\begin{align*}
\Phi_{\quot\,\field_{\st{t\geq\langle x,y\rangle>0}}}(K) 
&\simeq \Eeeim s \bl \quot\,\field_{\st{t\geq\lambda}} \ctens \Eopb s H \br \\
&\simeq \Eeeim s \bl\quot\,\field_{\st{t\geq\lambda}}\br  \ctens  H  \\
&\underset{(*)}\simeq \quot\bl\reim {{s_\R}} \field_{\st{t\geq\lambda}}\br  \ctens  H \simeq 0,
\end{align*}
where $(*)$ follows since $s$ is semiproper. (Recall that $s_\R=s\times\id_\R$.)
\end{proof}

By Lemma~\ref{lem:cartcomp}, we obtain the following analogue of \cite[Proposition 3.7.13]{KS90}.

\Lemma\label{lem:lapinv}
Let $V\to N$ be a vector bundle and let $f\cl N'\to N$ be a morphism.
Set $V'=V\times_NN'$ and $V^{\prime\,*}=V^*\times_NN'$,
and let $g\cl V'\to V$ and $h\cl V^{\prime\,*}\to V^*$ be the induced morphism.
Then
\bnum
\item
For any $K\in \BECp \Vi$, we have
$$\Eopb{h}(\lap K)\simeq\lap\bl\Eopb{g}K\br\qtq
\Eepb{h}(\lap K)\simeq\lap\bl\Eepb{g}K\br.$$
\item
For any $K'\in \BECp{(V')_\infty}$, we have
$$\Eoim{h}(\lap K')\simeq\lap\bl\Eoim{g}K'\br\qtq
\Eeeim{h}(\lap K')\simeq\lap\bl\Eeeim{g}K'\br.$$
\ee
\enlemma

The enhanced Fourier functor satisfies also other functorial properties, as those in Propositions 3.7.14 and 3.7.15 of \cite{KS90}.  The first one was already pointed out in \cite[\S5.2]{KS16L}, and the second one easily follows from
\[
\ex^{-\langle x_1,y_1\rangle}_{V_1\times_N V_1^*} \cetens[N] \ex^{-\langle x_2,y_2\rangle}_{V_2\times_N V_2^*} \simeq \ex^{-\langle (x_1,x_2),(y_1,y_2)\rangle}_{(V_1\times_N V_2)\times_N (V_1\times_N V_2)^*}.
\]

\subsection{Enhanced microlocalization}

As in \S~\ref{sse:spec}, let $M$ be a real analytic manifold, and $N\subset M$ a closed submanifold.

\begin{definition}
For $K\in\BECp M$, we set
\begin{align*}
\Emu_N(K)
&\defeq \lap\bl\Enu_N(K)\br \quad \\
&\simeq \bl\Enu_N(K)\br^\wedge \quad \in\BECcon{\inb{(T^*_NM)}}\cap\BECp{\inb{(T^*_NM)}}, \\
\Emu^\sph_N(K)
&\defeq \bl\Emu_N(K)\br^\sph  \quad \in\BECp{S^*_NM},
\end{align*}
where the isomorphism follows from Proposition~\ref{pro:Foucon}, since $\Enu_N(K)$ is $\bGmp$-conic.
The functor $\Emu_N$ is called enhanced microlocalization along $N$. 
\end{definition}

Note that one has
\begin{equation}\label{eq:emu}
e\circ\mu_N \simeq \Emu_N\circ e,
\end{equation}
and similarly for $e$ replaced by $\epsilon^+$.

The enhanced microlocalization functor satisfies the analogous functorial properties as those in Propositions 4.3.4, 4.3.5 and 4.3.6 of \cite{KS90}. The proofs in loc.\ cit.\ immediately extend to the enhanced framework.

\section{Specialization at $\infty$ on  vector bundles}\label{se:smash}

On a vector bundle $\tau\colon V\to N$, we construct an enhancement of the so-called \emph{smash functor} from \cite[\S6.1]{DHMS18}, 
which is related to ``specialization at $\infty$'', and we compute its enhanced Fourier-Sato transform.

\subsection{Smash functor}\label{sse:smash}

Let $\tau\colon V\to N$ be a vector bundle, and consider the morphisms of bordered vector bundles over $N$
\[
\xymatrix{
\Vi & \ar[l]_-{\psm} \Vi\times\inb{(\R_{>0})} \ar@{^(->}[r]^-{j_\sm} & \Vi\times\inb\R & \Vi \ar@{_(->}[l]_-{i_\sm},
}
\]
where $\psm(x,s)=s^{-1}x$, $ i_\sm(x)=(x,0)$, and $j_\sm$ is the open embedding.
In the rest of this section we will write for short $p$, $i$ and $j$ instead of $\psm$, $i_\sm$ and $j_\sm$, respectively, if there is no fear of confusion.

Note that $p$, $i$, $j$ are $\bGmp$-equivariant
with respect to the ordinary actions of $\bGmp$ on $\Vi$ and $\bR$,
except the trivial action on the leftmost $\Vi$.

\begin{definition}
For $K\in\BEC\Vi$, set
\[
\Esm_V(K) \defeq \Eopb i \Eoim j \Eopb p K\quad \in\BECcon\Vi.
\]
This is called the enhanced smash functor.
\end{definition}

With a proof similar to that of Lemma~\ref{lem:nurbbis}, one has

\begin{lemma}
With the above notations, one has
\[
\Esm_V(K)
\simeq \Eepb i \Eeeim j \Eepb p K.
\]
\end{lemma}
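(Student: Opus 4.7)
The plan is to mimic the proof of Lemma~\ref{lem:nurbbis}, with two modifications reflecting the geometry: here $i = i_\sm$ and $j = j_\sm$ are \emph{not} complementary closed and open embeddings in $\Vi \times \inb\R$, and $p = \psm$ is not an open embedding but a smooth submersion of relative dimension one. The core move is to apply the standard distinguished triangle
\[
\Eoim i \Eepb i L \to L \to \Eoim{j_U} \Eopb{j_U} L \to[+1]
\]
for the closed embedding $i$ and its actual open complement $j_U\colon \Vi \times \inb{(\R_{\neq 0})} \to \Vi \times \inb\R$, to the object $L = \Eeeim j \Eepb p K$. Applying $\Eopb i$ and using $\Eopb i \Eeeim j \simeq 0$ (the images of $i$ and $j$ are disjoint in $\Vi \times \inb\R$), the triangle reduces to an isomorphism
\[
\Eopb i \Eoim{j_U} \Eopb{j_U} \Eeeim j \Eepb p K \simeq \Eepb i \Eeeim j \Eepb p K \, [1].
\]

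To bring the left-hand side into recognizable form, factor $j = j_U \circ \iota$ with $\iota\colon \Vi \times \inb{(\R_{>0})} \to \Vi \times \inb{(\R_{\neq 0})}$. The latter is a \emph{clopen} embedding, since $\inb{(\R_{\neq 0})} = \inb{(\R_{>0})} \sqcup \inb{(\R_{<0})}$ as bordered spaces; hence $\Eeeim \iota \simeq \Eoim \iota$, and one readily derives $\Eopb{j_U} \Eeeim j \simeq \Eeeim \iota$ and then $\Eoim{j_U} \Eeeim \iota \simeq \Eoim j$. The display becomes $\Eopb i \Eoim j \Eepb p K \simeq \Eepb i \Eeeim j \Eepb p K \, [1]$. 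Finally, $\oo p\colon V \times \R_{>0} \to V$, $(x,s) \mapsto s^{-1} x$, is a smooth surjective submersion of relative dimension one with canonically oriented fibers (parametrized by $s > 0$), so $\Eepb p \simeq \Eopb p \, [1]$. Substituting into the previous display cancels the outer shift and yields $\Esm_V(K) = \Eopb i \Eoim j \Eopb p K \simeq \Eepb i \Eeeim j \Eepb p K$.

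The main obstacle is the bookkeeping of degree shifts. In Lemma~\ref{lem:nurbbis}, the distinguished triangle argument leaves an explicit $[1]$ in the final formula because $j_N$ is an open embedding and no smoothness shift arises. Here, the same $[1]$ appears mid-argument, but is precisely cancelled by the shift from $\Eepb p \simeq \Eopb p \, [1]$, producing a formula without shift. A minor additional care is needed to replace $j_U$ by $j$ via the clopen decomposition of $\inb{(\R_{\neq 0})}$, which is what allows the argument to proceed in $\Vi \times \inb\R$ even though $i$ and $j$ are not complementary there.
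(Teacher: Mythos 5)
Your argument is correct and follows the paper's indicated route: adapt the open/closed distinguished triangle argument of Lemma~\ref{lem:nurbbis}, apply $\Eopb{i}$ and use $\Eopb{i}\Eeeim{j}\simeq 0$, while tracking the two compensating $[1]$-shifts (the one from rotating the triangle cancels against $\Eepb{p}\simeq\Eopb{p}[1]$, which holds since $\psm$ is a submersion with fibers diffeomorphic to $\R_{>0}$). Your detour through $\inb{(\R_{\neq 0})}$ and its clopen splitting is a perfectly valid way to handle the non-complementarity of $i_\sm$ and $j_\sm$ in $\Vi\times\inb\R$; one could equivalently first restrict along the closed embedding of $\Vi\times\inb{(\R_{\geq 0})}$ into $\Vi\times\inb\R$, where the two embeddings become genuinely complementary, but this is a minor variant of the same idea.
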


\Lemma Let $o\cl N\to V$ be the zero section.
Then for $K\in\BEC\Vi$, one has
$$\Eopb o \Esm_V(K)\simeq \Eoim \tau K.$$
\enlemma
\Proof Let $\tilde\tau\cl \Vi\times\bR\to N$ be the projection,
and $\tilde o\cl N\to \Vi\times\bR$ the zero section.
  one has
\eqn
\Eopb o \Esm_V(K)
&&\simeq\Eopb o\Eopb i \Eoim j \Eopb p K\\
&&\simeq\Eopb{\tilde o} \Eoim j \Eopb p K\\
&&\underset{(*)}{\simeq}\Eoim{\tilde \tau} \Eoim j \Eopb p K\\[0ex]
&&\simeq\Eoim{\tau} \Eoim p \Eopb p K\\
&&\simeq\Eoim{\tau}K.
\eneqn
Here $(*)$ follows from $ \Eoim j \Eopb p K\in\BECcon{\Vi\times\bR}$.
\QED

Consider the vector bundle $W\defeq\R\times V\to N$, and let $\bdot W \defeq W\setminus(\st0\times N)$ be the complement of the zero section.
Recall from \S\ref{sse:convect} that the fiberwise sphere compactification%
\footnote{Here we choose a different compactification 
from the one in \cite[\S B.2]{DHMS18}.} of $\tau$  is $\SV\seteq\bdot W/\Gmp$,
We denote by $q\colon \bdot W\to \SV$ the quotient map, and set $[u,x] \defeq q(u,x)$.

There is a natural decomposition
\begin{equation}\label{eq:SVHV}
\SV = V^+ \sqcup H \sqcup V^-,
\end{equation}
corresponding to $u>0$, $u=0$ and $u<0$, respectively.
Note that the fibers of $H\to N$ are great spheres of codimension one in the fibers of $\SV\to N$.
Note also that there are natural identifications  $\iota^\pm\colon V\isoto V^\pm$, $x\mapsto[\pm1,x]$.
Set $N^\pm=\iota^\pm(N)\subset V^\pm$, and $N^\pm_0=N^\pm\times\st0\subset \SV\times\R$.

In order to compute the functor $\Enu_H$, let us describe the normal deformation $\ND[H]\SV$, and the bordered compactification $\inb{(\ND[H] \SV)}$ of $\pnd\colon\ND[H]\SV\to \SV$.

\begin{lemma}\label{le:ddotS}
With notations as above,
\begin{itemize}
\item[(i)]
one has
$\ND[H] \SV \simeq (\SV\times\R)\setminus(N_0^+\union N_0^-)$,
with
$\pnd([\tu,\tx],s) = [s\tu,\tx]$ and $\snd([\tu,\tx],s) = s$ \ro see Figure~\ref{fig:smash}\rf.
The $\Gm$-action on $\ND[H] \SV$ is given by $c\cdot([\tu,\tx],s)=(([c\tu,\tx]),c^{-1}s)$.
\item[(ii)]
One has $\inb{(\ND[H]\SV)} \simeq \bl (\SV\times\R)\setminus(N_0^+\union N_0^-), \SV\times\overline\R \br$.
\end{itemize}
\end{lemma}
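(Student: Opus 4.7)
The plan is to identify $X \defeq (\SV\times\R)\setminus(N^+_0\cup N^-_0)$ with $\ND[H]\SV$ by constructing the candidate $(\pnd,\snd)$-map explicitly. First I would define $\Phi\colon X\to\SV$ by $\Phi([\tu,\tx], s) = [s\tu, \tx]$ and $\tilde s\colon X\to\R$ by $\tilde s([\tu,\tx], s) = s$, and check well-definedness of $\Phi$: the pair $(s\tu,\tx)\in W$ fails to lie in $\bdot W$ only when $s\tu = 0$ and $\tx$ is in the zero section; since $[\tu,\tx]\in\SV$ forces $(\tu,\tx)\in\bdot W$, this happens only when $s = 0$ and $[\tu,\tx]\in N^+\cup N^-$, i.e., precisely on the excluded set. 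Over $\tilde s^{-1}(\R_{\neq 0})$ the map $(\Phi,\tilde s)$ is a diffeomorphism onto $\SV\times\R_{\neq 0}$, with explicit inverse $([u,x], s)\mapsto([u/s, x], s)$, recovering the standard identification $\snd^{-1}(\R_{\neq 0})\simeq \SV\times\R_{\neq 0}$ of the normal deformation.

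Next I would identify the zero fiber $\tilde s^{-1}(0) = (\SV\setminus(N^+\cup N^-))\times\st 0$ with $T_H\SV$. Writing $\SV = \bdot W/\Gmp$ with $\bdot W\subset \R\times V$, removing $N^+\cup N^-$ amounts to removing $\R_{\neq 0}\times o(N)$ from $\bdot W$, so $\SV\setminus(N^+\cup N^-)\simeq(\R\times\bdot V)/\Gmp$; the map $[(u,x)]\mapsto[x]\in H \simeq \bdot V/\Gmp$ realizes this as a line bundle, which a local computation identifies with $T_H\SV$. To confirm that $(X,(\Phi,\tilde s))$ actually realizes the normal deformation, I would match the local coordinates of Remark~\ref{re:blowcoord}~(iii): in the gauge $|\tx|=1$, a neighborhood of $H$ in $\SV$ has coordinates $(\tu,\hat x)\in\R\times S_NV$ with $H=\{\tu=0\}$, and then $X$ has local coordinates $(\tu,\hat x,s)$ with $(\Phi,\tilde s)(\tu,\hat x,s) = (s\tu,\hat x,s)$, matching $(\pnd,\snd)(v,y,s) = (sv,y,s)$ upon setting $v=\tu$ and $y=\hat x$. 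The $\Gm$-action $c\cdot(v,y,s) = (cv,y,c^{-1}s)$ from that remark then translates into the stated formula $c\cdot([\tu,\tx],s) = ([c\tu,\tx],c^{-1}s)$, whose well-definedness on $\SV$-classes reduces to the identity $\la\cdot(c\tu,\tx) = (c\la\tu,\la\tx)$ for $\la>0$.

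For (ii), I would first observe that $(X,\SV\times\overline\R)$ is a real analytic bordered space, as $\SV\times\overline\R$ is a real analytic manifold and $X$ is the complement of the closed subanalytic subset $(N^+_0\cup N^-_0)\cup(\SV\times\st{\pm\infty})$. By the uniqueness in Proposition~\ref{pro:bordcomp}, it suffices to check that $\Phi$ induces a semiproper morphism $(X,\SV\times\overline\R)\to\SV$. For this, I would use the identity $\tau\circ\Phi = \tau\circ\operatorname{pr}_1$ on the graph of $\Phi$ (where $\tau\colon\SV\to N$ is the bundle projection and $\operatorname{pr}_1\colon X\to\SV$ the first projection), which by continuity propagates to $\olG[\Phi]\subset\{(y,s,x)\in(\SV\times\overline\R)\times\SV\colon \tau(y)=\tau(x)\}$; then for any compact $K\subset\SV$, the preimage of $K$ under $\olG[\Phi]\to\SV$ lies in $\tau^{-1}(\tau(K))\times\overline\R\times K$, which is compact because $\tau$ is proper and $\overline\R$ is compact. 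The main technical point will be the identification of the line bundle $(\R\times\bdot V)/\Gmp$ with $T_H\SV$ and the matching against the local model of Remark~\ref{re:blowcoord}~(iii), which requires care to remain gauge-independent.
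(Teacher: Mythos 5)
Your proof is correct, but it takes a genuinely different route from the paper's. The paper works one level up: it computes the trivial normal deformation $\ND[Z]W \cong W\times\R$ for the hypersurface $Z=\{u=0\}$ in $W=\R\times V$ (which is immediate, since $Z$ is the zero section of a trivial line bundle), passes to the open subset $\ND[\bdot Z]{\bdot W}$, and then observes that the free, proper $\Gmp$-action on $\bdot W$ lifts to a free, proper action on $\ND[\bdot Z]{\bdot W}$ whose quotient map is a principal $\Gmp$-bundle; the identification $\ND[H]\SV \simeq (\SV\times\R)\setminus(N^+_0\cup N^-_0)$ and the formulas for $\pnd,\snd$ then drop out by descent, together with the $\Gm$-action. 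You instead construct the candidate $(\Phi,\tilde s)$ directly on $(\SV\times\R)\setminus(N^+_0\cup N^-_0)$, check it is an isomorphism onto $\SV\times\R_{\neq 0}$ over $s\neq 0$, identify the zero fiber with $T_H\SV$, and match against the local coordinate model of Remark~\ref{re:blowcoord}~(iii). The paper's descent argument is cleaner in that it sidesteps the gauge-independence concern you flag (the identification is forced by a diagram of principal bundles, with no local charts involved), and it makes the $\Gm$-action on $\ND[H]\SV$ manifestly inherited from the one on $\ND[Z]W$; on the other hand, your approach is more elementary and does not require knowing that forming the normal deformation commutes with restriction to open sets and with quotients by free proper $\Gmp$-actions. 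For (ii), the paper simply invokes uniqueness of bordered compactifications, whereas you spell out the semiproperness check (via $\tau\circ\Phi = \tau\circ\operatorname{pr}_1$ and properness of $\tau\colon\SV\to N$); this is what the paper's one-line claim implicitly requires, so your version is the more explicit of the two.
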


\begin{figure}\label{fig:smash}
\begin{tikzpicture}[baseline={([yshift=-.5ex]current bounding box.center)},
                    scale=1.3]
\def\yax{.4}
\draw[->,thin] (3,-.5) -- (3,2) node [below left] {$s$} ;
\filldraw (3,0) node[left] {$0$} circle (1pt) ;
\filldraw (3,1) node[left] {$1$} circle (1pt) ;
\draw (3.5,.5) node {$\R$} ;
\draw (-2,-2.5) node {$\SV$} ;
\draw (-2,.5) node {$\ND[H]\SV$} ;
\draw[->] (0,-\yax)++ (0,-.7) -- (0,-1.9) node [midway, right] {$\pnd$} ;
\draw[->] (1.4,.5) -- (2.6,.5) node [midway, above] {$\snd$} ;
\draw[red,name path=hyp] 
  (-100:1 and \yax)
    .. controls ($(-100:1 and \yax)+(.5,.5)$) and (1,1.2) .. 
  (1,2.5) ; 
\begin{scope}
    \clip (0,-.5) circle (1 and \yax) ;
\draw[red] 
  let \p1=(-100:1 and \yax) in (\p1)
    .. controls ($(\p1)+(.5,-.5)$) and ($(1,-1.2)+(0,2\y1)$) .. 
  ($(1,-2.5)+(0,2\y1)$) ; 
\end{scope}
\filldraw[white] (0,2) rectangle (1,2.5) ;
\draw[red] (0,-.5)++ (-100:1 and \yax) -- ++(0,2.5) ;
\filldraw[very thin,fill=white, draw=black] (-1,2) arc (-180:180:1 and \yax) ;
\draw[very thin,name path=circ] (-1,1) arc (-180:0:1 and \yax) ;
\draw[very thin,densely dotted] (-1,1) arc (-180:-360:1 and \yax) ;
\filldraw[red,name intersections={of=hyp and circ}] (intersection-1) circle (1pt) node[above left,black] {$b_1$} ;
\filldraw[red] (-100:1 and \yax)++ (0,1) circle (1pt) node[above left,black] {$N^+_1$} ;
\draw[thick] (-1,0) arc (-180:0:1 and \yax) ;
\draw[thick,densely dotted] (-1,0) arc (-180:-360:1 and \yax) ;
\filldraw[fill=white, draw=black] (-100:1 and \yax) node[above left]{$N^+_0$} circle (1pt);
\filldraw[fill=white, draw=black] (80:1 and \yax) circle (1pt);
\draw (-60:1 and \yax) node[below]{$T^+_H\SV$} ;
\draw (-130:1 and \yax) node[below]{$T^+_H\SV$} ;
\draw[very thin,] (-1,-.5) arc (-180:0:1 and \yax) ;
\draw[very thin,densely dotted] (-1,-.5) arc (-180:-360:1 and \yax) ;
\draw[thick] (-1,-.5) -- (-1,2) ;
\draw[thick] (1,-.5) -- (1,2) ;
\filldraw (1,0) circle (1pt);
\filldraw (-1,0) circle (1pt) ;
\draw[thick] (-1,-2.5) arc (-180:0:1 and \yax) ;
\draw[thick,densely dotted] (-1,-2.5) arc (-180:-360:1 and \yax) ;
\filldraw[red,name intersections={of=hyp and circ}] ($(intersection-1) + (0,-3.5)$) circle (1pt) node[below right,black] {$b$};
\draw[yshift=-2.5cm] (-120:1 and \yax) node[below left]{$V^+=V$} ;
\draw[yshift=-2.5cm] (125:1 and \yax) node[above]{$V^-$} ;
\filldraw[red,yshift=-2.5cm] (-100:1 and \yax) node[below,black]{$N^+$} circle (1pt);
\filldraw[yshift=-2.5cm] (80:1 and \yax) node[above right]{$N^-$} circle (1pt) ;
\filldraw[yshift=-2.5cm] (1,0) node[right]{$H$} circle (1pt);
\filldraw[yshift=-2.5cm] (-1,0) node[right]{$H$} circle (1pt) ;
\filldraw[yshift=-2.5cm] (0,0) circle (.5pt) ;
\draw[thin,->,yshift=-2.5cm] (0,0) -- ($.8*(-100:1 and \yax)$) node[midway,left] {$u$};
\draw[thin,->,yshift=-2.5cm] (0,0) -- (.8,0) node[midway,above] {$x$};
\end{tikzpicture}
\caption{
The maps $\R \from[\snd] \ND[H]\SV\to[\pnd] \SV$ pictured in the case $\dim V=1$ and $N=\st0$. In this case, $H=\st{[0,1]}\sqcup\st{[0,-1]}$.
The thick lines in $\ND[H]\SV$ represent $\pnd^{-1}(H)$.
The red lines represent the fibers $\pnd^{-1}(N^+)$ and  $\pnd^{-1}(b)$, for $b\in V\setminus N$.
In the figure, we write $c_1=c\times\st1$ for $c\in \SV$.
}
\end{figure}

\begin{proof}
(i) Set $Z=(u=0)\subset W$. Then $\ND[Z]W = W\times\R = \R\times V\times\R$ with $\pnd^W(\tu,\tx,s) = (s\tu,\tx)$ and $\snd^W(\tu,\tx,s) = s$. The $\Gm$-action on $\ND[Z]W$ is given by $c\cdot(\tu,\tx,s)=(c\tu,\tx,c^{-1}s)$.

One has $\ND[\bdot Z]{\bdot W} = \opb{(\pnd^W)}(\bdot W) = \ND[Z]W \setminus \bl(\R\times N\times\st0)\union(\st0\times N\times\R)\br$.
Consider the $\Gmp$-action on $\ND[\bdot Z]{\bdot W}$ induced by the $\Gmp$-action on $\bdot W$, which is given by $c\cdot(\tu,\tx,s)=(c\tu,c\tx,s)$. Its quotient is the map $q\colon \ND[\bdot Z]{\bdot W} \to (\SV\times\R)\setminus(N_0^+\union N_0^-)$, given by $q(\tu,\tx,s)=([\tu,\tx],s)$. 
Setting  $\tilde p([\tu,\tx],s) = [s\tu,\tx]$ and $\tilde s([\tu,\tx],s) = s$, there is a commutative diagram with cartesian square
\[
\xymatrix@R=3ex{
\ND[\bdot Z]{\bdot W} \ar[r]_-q \ar[d]^-{\pnd^W} \ar@/^3ex/[rr]^{\snd^W} & (\SV\times\R)\setminus(N_0^+\union N_0^-) \ar[d]^-{\tilde p} \ar[r]_-{\tilde s} & \R \\
\bdot W \ar[r]^-q \ar@{}[ur]|\square & \SV.
}
\]
Since the quotient maps $q$ are principal $\Gmp$-bundles, it follows that
$\ND[H] \SV = (\SV\times\R)\setminus(N_0^+\union N_0^-)$, $\pnd=\tilde p$ and $\snd=\tilde s$.

\smallskip\noindent
(ii) follows by uniqueness of bordered compactifications.
\end{proof}

Denote by $T^+_H\SV \subset \bdot T_H\SV$  the normal vectors pointing to $V^+$.
Since
\[
T_H\SV = \snd^{-1}(0) = \bl\bdot V^+ \sqcup H \sqcup \bdot V^-\br\times\st0,
\]
this gives a natural identification $T^+_H\SV = \bdot V^+$.
We will also use the identification $V=V^+$ given by $\iota^+$.
Note that
the $\R^\times_{>0}$-action on $\bdot V^+\subset T_H\SV $
induced by the one on $T_H\SV$ is given by
$c\cdot\iota^+(x)= \iota^+(c^{-1}x)$.

By Lemma~\ref{le:EnuMU}, with the above identifications, $\Enu_H$ induces a functor
\[
\Enu_{H|\bdot V} \colon
\BEC \dVi \to \BEC \dVi.
\]
Similarly, $\Esm_V$ induces a functor (see \eqref{eq:smdot} below)
\[
\Esm_{V|\bdot V} \colon \BEC \dVi \to \BEC \dVi.
\]

\begin{lemma}\label{lem:nusm}
With the above notations, one has
\[
\Enu_{H|\bdot V} \simeq \Esm_{V|\bdot V}.
\]
\end{lemma}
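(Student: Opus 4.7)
The approach is to show that, after restriction to the positive hemisphere of the bordered normal deformation, the diagram defining $\Enu_{H|\bdot V}$ coincides with the diagram defining $\Esm_{V|\bdot V}$. I would proceed in three stages.

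First, I would apply Lemma~\ref{le:EnuMU} with the role of $M$ played by $\SV$, of $N$ by $H$, and of $S$ by the closed subset $S\defeq\overline{V^-}\cup N^+\subset\SV$. From $\SV\setminus S = \bdot V^+\simeq\bdot V$ via $\iota^+$ and the formula $\pnd([\tu,\tx],s) = [s\tu,\tx]$ of Lemma~\ref{le:ddotS}(i), one computes $\pnd^{-1}(S)\cap\Omega = S\times\R_{>0}$ and hence
\[
C_H(S) \;=\; T_H\SV \cap \overline{S\times\R_{>0}} \;=\; \bdot V^- \sqcup H,
\]
so that $T_H\SV\setminus C_H(S) \simeq \bdot V^+ \simeq \bdot V$. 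Lemma~\ref{le:EnuMU} then yields a functor $\Enu_{H|\bdot V}\colon \BEC\dVi\to\BEC\dVi$, expressed as $\Eopb{i'}\Eoim{j'}\Eopb{p_\Omega'}$ on the restricted diagram, whose ambient bordered space one finds to be
\[
W' \;\defeq\; \inb{(\ND[H]\SV)}\setminus\overline{\pnd^{-1}(S)\cap\Omega} \;=\; \bl\bdot V^+\times\R\br \,\cup\, \bl\SV\times\R_{<0}\br.
\]

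Second, I would shrink the ambient space to the open sub-bordered-space $\inb{U'}$ with $U'\defeq\bdot V^+\times\R\subset W'$. Both the closed inclusion $i'\colon \inb{\bdot V^+}\hookrightarrow \inb{W'}$ (at $s=0$) and the open inclusion $j'\colon \bdot V^+\times\inb{(\R_{>0})}\hookrightarrow \inb{W'}$ factor through $\inb{U'}$; writing $k\colon \inb{U'}\hookrightarrow\inb{W'}$ and $i'=k\circ i''$, $j'=k\circ j''$, and using $\Eopb{k}\Eoim{k}\simeq\id$ for the open embedding $k$, one gets
\[
\Enu_{H|\bdot V}(K) \;\simeq\; \Eopb{i''}\Eoim{j''}\Eopb{p_\Omega'}K.
\]
Transporting through the bordered-space isomorphism $\iota^+\colon \dVi\times\inb\R\isoto\inb{U'}$, the map $p_\Omega'(x,s) = [s,x] = \iota^+(s^{-1}x)$ corresponds to $\psm$, while $j''$ and $i''$ correspond to $j_\sm$ and $i_\sm$. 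The resulting diagram is exactly the one defining $\Esm_{V|\bdot V}$, and the lemma follows.

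The main obstacle is the presence of the unexpected component $\SV\times\R_{<0}$ inside $W'$, coming from the ``branch'' of $\inb{(\ND[H]\SV)}$ on which $\tu<0$ and $s<0$ (which also maps into $V^+$ under $\pnd$). The key observation resolving this is that neither $i'$ nor $j'$ touches this extra component, so the ambient space may be safely replaced by $U' = \bdot V^+\times\R$ via $\Eopb k\Eoim k\simeq\id$; once this reduction is made, the identification of diagrams is essentially tautological.
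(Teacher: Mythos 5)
Your proof is correct and essentially follows the paper's route: you restrict the normal-deformation diagram of $\SV$ along $H$ to the hemisphere $\bdot V^+\simeq\bdot V$ and identify $\pnd'$ with $\psm'$ via $[s,x]=[1,s^{-1}x]$, which is exactly the computation at the heart of the paper's argument. The only difference is that you invoke Lemma~\ref{le:EnuMU} literally first (so the ambient space $W'$ acquires the extra component $\SV\times\R_{<0}$) and then discard it via $\Eopb k\Eoim k\simeq\id$ for the open embedding $k\colon\inb{U'}\hookrightarrow\inb{W'}$, whereas the paper sets up the three-row Cartesian diagram so that the middle term is $\dVi\times\inb\R$ from the start, making the $W'$ detour unnecessary.
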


\begin{proof}
Consider the diagram with cartesian squares
\[
\xymatrix@R=3ex{
\SV & \ar[l]_-\pnd \inb\Omega \ar[r]^-{j_\nd} & \inb{(\ND[H]\SV)} & \inb{(T_H\SV)} \ar[l]_-{i_\nd} \\
\dVi \ar@{^(->}[u]^{j_\SV} \ar@{_(->}[d]_{j_V} \ar@{}[ur]|-\square & \ar@<.5ex>[l]^-{\psm'} \ar@<-.5ex>[l]_-{\pnd'} \dVi\times\inb{(\R_{>0})} \ar[r]^-{j} \ar@{^(->}[u] \ar@{_(->}[d] \ar@{}[ur]|-\square & \dVi\times\inb\R \ar@{^(->}[u] \ar@{_(->}[d] \ar@{}[ur]|-\square & \dVi\rlap{$\ =\inb{(T_H^+\SV)}$} \ar[l]_-{ i} \ar@{^(->}[u] \ar@{_(->}[d] \ar@{^(->}[u]^{j'_\SV} \ar@{_(->}[d]_{j'_V} \\
\Vi \ar@{^(->}@/^2.0pc/[uu]^-j \ar@{}[ur]|-\square & \ar[l]_-\psm \Vi\times\inb{(\R_{>0})} \ar[r]^-{j_\sm} \ar@{}[ur]|-\square & \Vi\times\inb\R \ar@{}[ur]|-\square & \Vi \ar[l]_-{i_\sm}.
}
\]
Here, $\psm'$ and $\pnd'$ are induced by $\psm$ and $\pnd$, respectively.

Since
\begin{align*}
j_\SV(\pnd'(x,s)) &= \pnd([1,x],s) = [s,x] = [1,s^{-1}x], \\
j(j_V(p_\sm'(x,s))) &= j(\psm(x,s)) = j(s^{-1}x) = [1,s^{-1}x],
\end{align*}
one has in fact $\pnd'=\psm'$.

By \eqref{eq:EnuU}, one has
\[
\Enu_{H|\bdot V} \simeq \Eopb{ i} \circ \Eoim{{j}} \circ \Eopb{(\pnd')} .
\]
Similar arguments give
\begin{equation}\label{eq:smdot}
\Esm_{V|\bdot V}\simeq \Eopb{ i} \circ \Eoim{{j}} \circ \Eopb{(\psm')} .
\end{equation}
\end{proof}

\subsection{Smash functor and microlocalization}

As in the previous section, let $\tau\colon V\to N$ be a vector bundle.
Denote by $o\colon N\to V$ the embedding of the zero section.
Consider the natural identifications
\[
N=o(N)\subset V, \qquad T^*_NV =  V^*.
\]
There is the following relation between the smash functor and the Fourier-Sato transform

\begin{proposition}\label{pro:musm}
For $K\in\BECp\Vi$ there is an isomorphism in $\BECp\Wi$
\[
\Emu_N(K) \simeq \Esm_\W(\lap K).
\]
In other words, one has $\lap(\cdot)\circ\Enu_N\simeq\Esm_\W\circ\lap(\cdot)$.
\end{proposition}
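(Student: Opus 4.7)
The plan is to reduce the statement via Lemma~\ref{lem:lapinv} to a compatibility identity between the enhanced Fourier transform and the $\bGmp$-scaling on the vector bundle $V$.

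Since $\Emu_N(K) = \lap(\Enu_N(K))$ by definition, it suffices to show $\lap(\Enu_N K) \simeq \Esm_\W(\lap K)$. Using the identifications $T_N V \simeq V$ and $T^*_N V \simeq W$, together with the description in the proof of Lemma~\ref{lem:nu_Nid} giving $\inb{(\ND V)} \simeq V \times \bR$, $\inb\Omega \simeq V \times \bGmp$, and $p_\Omega = \mu$, the specialization takes the form
\[
\Enu_N(K) = \Eopb{i_\nd} \Eoim{j_\nd} \Eopb\mu K,
\]
where $\mu\colon V \times \bGmp \to V$ is the $\bGmp$-action, $i_\nd\colon V \hookrightarrow V \times \bR$ the zero-section inclusion, and $j_\nd\colon V \times \bGmp \hookrightarrow V \times \bR$ the open embedding. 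The smash functor $\Esm_\W$ has the analogous presentation $\Esm_\W(\lap K) = \Eopb{i_\sm}\Eoim{j_\sm}\Eopb\psm(\lap K)$ with $V$ and $\mu$ replaced by $W$ and $\psm\colon W \times \bGmp \to W$, $(y, s) \mapsto s^{-1} y$.

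Next, view $V \times \bR$ and $W \times \bR$ as dual vector bundles over $N \times \bR$. The maps $i_\nd, j_\nd$ (resp.\ $i_\sm, j_\sm$) are then induced from the base changes along $N \hookrightarrow N \times \bR$ (at $s=0$) and $N \times \bGmp \hookrightarrow N \times \bR$, with corresponding dual maps on the $W$-side. Applying Lemma~\ref{lem:lapinv}(i)-(ii) to commute $\lap$ past $\Eopb{i_\nd}$ and $\Eoim{j_\nd}$ yields
\[
\lap(\Enu_N K) \simeq \Eopb{i_\sm} \Eoim{j_\sm} \bl \lap_{N\times\bGmp}(\Eopb\mu K) \br,
\]
where $\lap_{N\times\bGmp}$ denotes the family Fourier over $N \times \bGmp$. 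The statement thus reduces to the key identity $\lap_{N\times\bGmp}(\Eopb\mu K) \simeq \Eopb\psm(\lap K)$.

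The key identity exploits the $\bGmp$-invariance of the pairing $\langle x, y\rangle_V$ under the twisted action $s\cdot(x,y) = (sx, s^{-1}y)$ on $V \times_N W$. Setting $\nu\colon V \times_N W \times \bGmp \to V \times_N W$, $(x, y, s) \mapsto (sx, s^{-1}y)$, this invariance gives $\Eopb\nu \Lap \simeq \Lap_{N\times\bGmp}$, and since $\mu \circ p_s = p \circ \nu$ (where $p_s, p$ project onto the $V$-factor), the kernel definition of $\lap_{N\times\bGmp}(\Eopb\mu K)$ rewrites as $\Eeeim{q_s}\Eopb\nu(\Eopb p K \ctens \Lap)$. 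Factoring $\nu = \pi' \circ \tilde\nu$ with $\tilde\nu(x,y,s) = (sx, s^{-1}y, s)$ an isomorphism of $V \times_N W \times \bGmp$ and $\pi'$ the projection to $V \times_N W$, and using $\Eopb{\tilde\nu} \simeq \Eeeim{\tilde\nu^{-1}}$ together with the identity $q_s \circ \tilde\nu^{-1} = \alpha \circ q_s$ for $\alpha\colon W \times \bGmp \to W \times \bGmp$, $(y,s) \mapsto (sy, s)$, this becomes $\Eeeim\alpha \Eeeim{q_s}\Eopb{\pi'}(\Eopb p K \ctens \Lap)$. Base change along the cartesian square relating $q_s, \pi', q, \pi_W$ (with $\pi_W\colon W \times \bGmp \to W$ the projection) then replaces the inner composition by $\Eopb{\pi_W}(\lap K)$, and since $\pi_W \circ \alpha^{-1} = \psm$ the conclusion follows. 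The main obstacle is this last kernel manipulation: once organized around the pairing-preserving twisted action each step is essentially forced, but the compatibilities of $\Eopb, \Eoim, \Eeeim$ with isomorphisms and cartesian squares in the enhanced ind-sheaf framework must be tracked carefully.
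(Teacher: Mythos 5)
Your proposal is correct and follows essentially the same route as the paper's proof: both are organized around the observation that the normal-deformation diagram on the $V$-side and the smash diagram on the $W$-side form a pair of dual vector-bundle towers over $N \leftarrow N\times\bGmp \rightarrow N\times\bR \leftarrow N$, with the scaling maps $p_\nd(x,s)=sx$ and $\psm(y,s)=s^{-1}y$ preserving the duality pairing, and both conclude via Lemma~\ref{lem:lapinv}. The one place you diverge in presentation is the top row: the paper invokes Lemma~\ref{lem:lapinv} uniformly, even though $p_\nd$ is not literally the base-change projection appearing in that lemma's hypotheses but rather a pairing-compatible fiberwise isomorphism (so one is really appealing to the underlying Lemma~\ref{lem:cartcomp}); you instead isolate this step as the identity $\lap_{N\times\bGmp}(\Eopb\mu K)\simeq\Eopb\psm(\lap K)$ and prove it by an explicit kernel manipulation through the twisted action $\nu(x,y,s)=(sx,s^{-1}y)$, which is a correct and somewhat more self-contained way to supply what the paper leaves implicit.
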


\begin{proof}
Consider the following diagram with cartesian squares. 
\[
\xymatrix@R=4ex@C=7ex{
\Vi & \Vi\times_N\Wi \ar[l]^-p \ar[r]_-q & \Wi \\
\Vi\times\inb{(\R_{>0})} \ar[u]^{\pnd} \ar@{_(->}[d]_{ j_V} \ar@{}[ur]|-\square &(\Vi\times_N\Wi)\times\inb{(\R_{>0})} \ar[l]^-p \ar[r]_-q \ar[u]^{\pnd\times \psm} \ar@{_(->}[d]_ j \ar@{}[ur]|-\square &\ake \Wi\times\inb{(\R_{>0})} \ar[u]^{\psm} \ar@{_(->}[d]_{ j_{\W}} \\
\Vi\times\inb\R \ar@{}[ur]|-\square & (\Vi\times_N\Wi)\times\inb\R \ar[l]^-p \ar[r]_-q \ar@{}[ur]|-\square & \Wi\times\inb\R \\
\ake[2.5ex]\Vi \ar@{^(->}[u]^{i_V} \ar@{}[ur]|-\square & \Vi\times_N\Wi \ar[l]^-p \ar[r]_-q \ar@{^(->}[u]^k \ar@{}[ur]|-\square & \ake[2.5ex]\Wi \ar@{^(->}[u]^{i_{\W}}\,.
}
\]
We have to prove
\begin{equation}\label{eq:ndLsm}
\lap\bl\Eopb  i_V \Eoim {{j_V}} \Eopb p_\nd K\br
\simeq \Eopb  i_{\W} \Eoim {{j_{\W}}} \Eopb \psm \bl\lap K\br.
\end{equation}

Consider the left and right columns as morphisms of vector bundles over $N$, $N\times\R_{> 0}$, $N\times\R$ and $N$, respectively. Then the left column is the dual of the right column, and the middle column is the vector bundle product of the left and the right columns.
Moreover, since $(\prb\times \psm)(x,y,s) = (sx,s^{-1}y)$ and $\langle sx,s^{-1}y \rangle = \langle x,y \rangle$, 
the maps from the second row to the top row
is compatible with the coupling of 
the left columns and the right columns.

Therefore \eqref{eq:ndLsm} follows from Lemma~\ref{lem:lapinv}.
\end{proof}

\end{document}